\newcommand{\FF}{\mathbb F}
\newcommand{\RR}{\mathbb R}
\newcommand{\KK}{\mathbb K}
\newcommand{\fT}{\mathfrak T}
\newcommand{\cG}{\mathcal G}
\newcommand{\cS}{\mathcal S}
\newcommand{\cQ}{\mathcal Q}
\newcommand{\cP}{\mathcal P}
\newcommand{\GG}{\mathbb G}
\newcommand{\cW}{\mathcal W}
\newcommand{\Rad}{\mathrm{Rad}\,}
\newcommand{\chr}{\mathrm{char}}
\newcommand{\PG}{\mathrm{PG}}
\newcommand{\GL}{\mathrm{GL}}
\newcommand{\rk}{\mathrm{rk}}
\newcommand{\cod}{\operatorname{cod}}
\newtheorem{problem}{Problem}
\theoremstyle{theorem}
\newtheorem{theorem}{Theorem}[section]
\newtheorem{lemma}[theorem]{Lemma}
\newtheorem{corollary}[theorem]{Corollary}
\newtheorem{prop}[theorem]{Proposition}
\theoremstyle{definition}
\theoremstyle{definition}
\newtheorem{note}{Note}
\title{A geometric approach to alternating $k$-linear forms}
\author{Ilaria Cardinali, Luca Giuzzi and Antonio Pasini}
\begin{document}
\maketitle
\begin{abstract}
Given an $n$-dimensional vector space $V$ over a field $\KK$, let $2\leq k < n$. A natural one-to-one correspondence exists between the alternating $k$-linear forms of $V$ and the linear functionals of $\bigwedge^kV$, an alternating $k$-linear form $\varphi$ and a linear functional $f$ being matched in this correspondence precisely when $\varphi(x_1,\ldots, x_k) = f(x_1\wedge\cdots\wedge x_k)$ for all $x_1,\ldots, x_k \in V$. Let $\varepsilon_k:\cG_k(V)\rightarrow \PG(\bigwedge^kV)$ be the Pl\"{u}cker embedding of the $k$-Grassmannian $\cG_k(V)$ of $V$. Then $\varepsilon_k^{-1}(\ker(f)\cap\varepsilon_k(\cG_k(V)))$ is a hyperplane of the point-line geometry $\cG_k(V)$. It is well known that all hyperplanes of $\cG_k(V)$ can be obtained in this way, namely every hyperplane of $\cG_k(V)$ is the family of $k$-subspaces of $V$ where a given alternating $k$-linear form identically vanishes. For a hyperplane $H$ of $\cG_k(V)$, let $R^\uparrow(H)$ be the subset (in fact a subspace) of $\cG_{k-1}(V)$ formed by the $(k-1)$-subspaces $A\subset V$ such that $H$ contains all $k$-subspaces that contain $A$. In other words, if $\varphi$ is the (unique modulo a scalar) alternating $k$-linear form defining $H$, then the elements of $R^\uparrow(H)$ are the $(k-1)$-subspaces $A = \langle a_1,\ldots, a_{k-1}\rangle$ of $V$ such that $\varphi(a_1,\ldots, a_{k-1},x) = 0$ for all $x\in V$. In principle, when $n-k$ is even it might happen that $R^\uparrow(H) = \emptyset$. When $n-k$ is odd then $R^\uparrow(H) \neq \emptyset$, since every $(k-2)$-subspace of $V$ is contained in at least one member of $R^\uparrow(H)$, but it can happen that every $(k-2)$-subspace of $V$ is contained in precisely one member of $R^\uparrow(H)$. If this is the case, we say that $R^\uparrow(H)$ is
 \emph{spread-like}. In this paper we obtain some results on $R^\uparrow(H)$ which answer some open questions from the literature  and suggest the conjecture that, if $n-k$ is even and at least $4$, then $R^\uparrow(H) \neq \emptyset$ but for one exception with
$\KK\leq\RR$ and $(n,k) = (7,3)$, while if $n-k$ is odd and at least $5$ then $R^\uparrow(H)$ is never spread-like.
\end{abstract}
\noindent
{\bfseries Keywords:} Grassmann Geometry; Hyperplane; Multilinear form; Alternating form.
\par\noindent
{\bfseries MSC:} 15A75; 14M15; 15A69.

\section{Introduction}

\subsection{Embeddable Grassmannians and their hyperplanes}\label{introduction}

Let $V$ be an $n$-dimensional vector space over a field $\KK$. For $1\leq k < n$ denote by ${\cG}_k(V)$ the $k$-Grassmannian of $V$, namely the point-line geometry whose points are the $k$--dimensional vector subspaces of $V$ and whose lines are the sets $\ell_{Y,Z}:=\{X~\colon ~Y\subset X\subset Z, ~\dim X=k\}$ where $Y$ and $Z$ are subspaces of $V$ with $Y\subset Z,$ $\dim Y=k-1$
and $\dim Z=k+1$. Incidence is containment. In particular, $\cG_1(V) = \PG(V)$ and $\cG_{n-1}(V) \cong \cG_1(V^*)$, where $V^*$ is the dual of $V$.

Using the Pl\"{u}cker embedding $\varepsilon_k\colon \cG_k(V) \rightarrow \PG(\bigwedge^kV)$, which maps every $k$-subspace $\langle v_1,\ldots, v_k\rangle$ of $V$ onto the point $\langle v_1\wedge\cdots\wedge v_k\rangle$ of $\PG(\bigwedge^kV)$, the point-set of $\cG_k(V)$ is mapped onto a projective variety $\GG_k(V) \subset \PG(\bigwedge^k V)$. It is well known that $\GG_k(V)$ spans $\PG(\bigwedge^kV)$.

According to the terminology commonly used for point-line geometries, a subspace of $\cG_k(V)$ is a set $S$ of points of $\cG_k(V)$ such that if a line $\ell$ of $\cG_k(V)$ meets $S$ in at least two distinct points then $\ell\subseteq S$. A hyperplane of $\cG_k(V)$ is a proper subspace of $\cG_k(V)$ which meets
every line of $\cG_k(V)$ non-trivially. In view of a theorem of Shult \cite{shult92},
every hyperplane $H$ of $\cG_k(V)$ arises as a hyperplane section
from $\varepsilon_k$, namely $H=H_f$ where $H_f =\varepsilon_k^{-1}(\ker(f)\cap \GG_k(V))$ for a non-null linear functional $f$
on $\bigwedge^kV$. Equivalently, if $\alpha_f:V\times\ldots\times V\rightarrow \KK$ is the alternating $k$-linear form associated to $f$, defined by the clause $\alpha_f(v_1,\ldots, v_k) := f(v_1\wedge\cdots\wedge v_k)$, the hyperplane $H$ is the set of $k$-subspaces of $V$ where $\alpha_f$ identically vanishes. So, the hyperplanes of $\cG_k(V)$ bijectively correspond to proportionality classes of non-zero linear functionals of $\bigwedge^k V$
 as well as proportionality classes of non-trivial $k$-linear alternating forms of $V$.

Clearly, when $k = 1$ or $k = n-1$ the previous definition gives us back the hyperplanes (respectively, dual hyperplanes) of $\PG(V)$, defined as usual.
Suppose $k = 2$ and take $H$ as a hyperplane of $\cG_2(V)$. Let ${\cS}(H)$ be the point-line geometry defined as follows: its points are just the points of $\PG(V)$ and the lines are the members of $H$, regarded as lines of $\PG(V)$. The following proposition is crucial for all we are going to say in this paper. It has to do with polar spaces. We presume that the reader is fairly familiar with them; if not, we refer him$/$her to Buekenhout and Cohen \cite[Chapters 7-10]{BC} or Shult \cite[Chapter 7]{shult11}.

\begin{prop}\label{Polar Space}
The geometry ${\cS}(H)$ is a (possibly degenerate) polar space of symplectic type.
\end{prop}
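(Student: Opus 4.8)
The plan is to recast $\cS(H)$ in terms of the alternating bilinear form attached to $H$ and then to verify the Buekenhout--Shult ``one or all'' axiom by a one-line linear algebra computation. By the discussion preceding the statement, $H=H_f$ for a non-zero linear functional $f$ on $\bigwedge^2V$; put $\alpha:=\alpha_f$, a non-zero alternating bilinear form on $V$ (non-zero because a hyperplane is a \emph{proper} subspace of $\cG_2(V)$), so that for linearly independent $u,v\in V$ one has $\langle u,v\rangle\in H$ iff $\alpha(u,v)=0$. Since $\alpha$ is bilinear and alternating, the vanishing of $\alpha$ on a $2$-space $\langle u,v\rangle$ is equivalent to $\alpha(u,v)=0$; this condition is symmetric in $u$ and $v$ and depends only on the points $\langle u\rangle,\langle v\rangle$ of $\PG(V)$. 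Consequently, two distinct points $\langle u\rangle\neq\langle v\rangle$ are joined by a line of $\cS(H)$ precisely when the unique line $\langle u,v\rangle$ of $\PG(V)$ through them lies in $H$, i.e.\ when $\alpha(u,v)=0$: the collinearity relation of $\cS(H)$ is exactly the orthogonality relation $\perp$ of the symplectic polarity defined by $\alpha$. Each line of $\cS(H)$ is a line of $\PG(V)$, hence has $|\KK|+1\geq 3$ points; two points lie on at most one common line; and since $n\geq 3$ every point lies on some line. So $\cS(H)$ is a partial linear space with thick lines and no isolated points.

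The core step is the axiom itself. Let $p=\langle u\rangle$ be a point and $\ell=\langle v,w\rangle\in H$ a line of $\cS(H)$. The map $x\mapsto\alpha(u,x)$ restricts to a linear functional on the $2$-space $\ell$, and a point $\langle\lambda v+\mu w\rangle$ of $\ell$ is collinear with $p$ exactly when $\lambda\,\alpha(u,v)+\mu\,\alpha(u,w)=0$. If this functional vanishes identically on $\ell$ (equivalently $\alpha(u,v)=\alpha(u,w)=0$) then $p$ is collinear with every point of $\ell$; otherwise its kernel is a single point, the unique point of $\ell$ collinear with $p$. This is precisely the polar space axiom. Combining it with the elementary facts above, $\cS(H)$ is a (possibly degenerate) polar space in the sense of \cite[Chapters 7--10]{BC} (see also \cite[Chapter 7]{shult11}); and since its collinearity relation is the orthogonality of the alternating form $\alpha$, it is of symplectic type: its radical (the set of points collinear with all points) is $\PG(\Rad(\alpha))$, and modulo this radical it is the nondegenerate symplectic polar space of $V/\Rad(\alpha)$.

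I do not anticipate a real obstacle: the argument is essentially a dictionary between hyperplanes of $\cG_2(V)$ and symplectic geometries, and the one-or-all property reduces to the triviality that a homogeneous linear equation in two unknowns over $\KK$ has either all of $\KK^2$ or a single projective point as solution set. The only points demanding a little care are fixing a convenient axiomatization of ``possibly degenerate polar space'' together with the (immediate) thickness and non-degeneracy-of-lines conditions it entails, and remembering to invoke properness of $H$ so that $\alpha\neq 0$ and $\cS(H)$ is genuinely symplectic rather than the whole projective space $\PG(V)$.
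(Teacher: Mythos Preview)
Your proof is correct and is essentially the first of the two arguments the paper offers: the paper notes that the statement ``immediately follows from what we have said above on hyperplanes of $\cG_k(V)$ and alternating $k$-linear forms,'' and your write-up is a clean execution of exactly that, using Shult's theorem to obtain the alternating form $\alpha$ and then reading off the one-or-all axiom from the linearity of $x\mapsto\alpha(u,x)$.

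The paper, however, then spells out a \emph{different} proof that deliberately avoids Shult's theorem. Given a line $L\in H$ and a point $p\notin L$, it observes that the pencil $\ell_{p,X}$ of lines through $p$ in the plane $X=\langle L,p\rangle$ is itself a line of $\cG_2(V)$; the hyperplane condition on $H$ then forces $\ell_{p,X}$ to meet $H$ in either one or all of its members, which is the Buekenhout--Shult axiom. The symplectic type is then deduced from the embedding of $\cS(H)$ in $\PG(V)$ with full point-set, citing \cite[Chapter 9]{BC}. What this buys is independence from \cite{shult92}: the argument re-derives the $k=2$ case of Shult's result rather than assuming it, and (as the paper remarks) it even works over non-commutative division rings, where it shows that every hyperplane of $\cG_2(V)$ is trivial. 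Your approach is shorter and more algebraic, but it takes the form--hyperplane correspondence as input.
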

\begin{proof}
This statement immediately follows from what we have said above on hyperplanes of $\cG_k(V)$ and alternating $k$-linear forms. However, it can also be proved by the following straightforward argument, with no recourse to \cite{shult92}. Actually, in this way we obtain anew the result of \cite{shult92} for
the special case $k = 2$.

Given a line $L\in H$ of ${\cS}(H)$ and a point $p\not \in L$, let $X = \langle L, p\rangle$; this is a plane of $\PG(V)$.
The set $\ell_{p,X}$ of all lines of $X$ through $p$ is a line of $\cG_2(V)$. As $H$ is a hyperplane of $\cG_2(V)$, either $\ell_{p,X}\subset H$ or $\ell_{p,X}$ contains just one element $M\in H$. Thus, $p$ is collinear in ${\cS}(H)$ with either all the points of $L$ or just one of them; hence ${\cS}(H)$ is a polar space.
 Moreover, by definition, ${\cS}(H)$ is embedded in $\PG(V)$ in such a way that all points of $\PG(V)$ are points of ${\cS}(H)$. It follows that ${\cS}(H)$ is associated to an alternating bilinear form (see \cite[Chapter 9]{BC}).
\end{proof}
\begin{note}
  Although in this paper we are only interested in embeddable Grassmannians,
  it is worth mentioning that if $\cG_2(V)$ were non-embeddable, namely $\KK$ were a non-commutative division ring rather than a field, then the above argument would imply that every hyperplane of $\cG_2(V)$ is trivial in the sense of Section~\ref{trivial} of this paper, namely that it contains precisely those lines of $\PG(V)$ that meet a given subspace of codimension $2$ non-trivially. This fact can be used as the initial step of an inductive proof that all hyperplanes of a non-embeddable Grassmannian are trivial, thus
proving anew the result of~\cite{hall-shult93}.
\end{note}

\subsection{The problems studied in this paper}\label{problems}

Still under the assumption $k = 2$ and taking $H$ as above, let $R(H)$ be the radical of the polar space ${\cS}(H)$. Namely, $R(H)$ is the subspace of $\PG(V)$ formed by the points $p\in \PG(V)$ such that all lines of $\PG(V)$ through $p$ belong to $H$. Clearly, nothing can be said on $R(H)$ in general, except that $R(H)$ has even codimension in $\PG(V)$, since ${\cS}(H)$ is of symplectic type. In particular, when $n$ is even it can happen that $R(H) = \emptyset$ and, when $n$ is odd, $R(H)$ might consist of a single point.

It is natural to ask what are the structures that deserve to be taken as the analogues of $R(H)$ when $k > 2$ and investigate what can be said about them
in general. So, let $k \geq 2$ and let $H$ be a hyperplane of $\cG_k(V)$. Given a subspace $X$ of $V$ of dimension $\dim(X) < k$, let $(X)G_k$ be the set of $k$-subspaces of $V$ that contain $X$. For $1 \leq i < k$ let $R_i(H)$ be the set of $i$-subspaces $X$ of $V$ such that $(X)G_k\subseteq H$. As we shall prove later (Proposition~\ref{V2(H) subspace}), the set $R_i(H)$ is a subspace of $\cG_i(V)$. It is natural to regard this subspace as the $i$-\emph{radical} of $H$. However, the programme of investigating $i$-radicals for any $i < k$ is perhaps to broad to be feasible at this stage.
Thus, we shall here  consider only $R_{k-1}(H)$ and $R_1(H)$. We put
\[R^\uparrow(H) := R_{k-1}(H), ~~~ R_\downarrow(H) := R_1(H)\]
and we respectively call them the \emph{upper} and \emph{lower radical} of $H$.
When $k = 2$ the lower and the upper radical coincide and are equal to the radical $R(H)$ of the polar space ${\cS}(H)$. In this case there is nothing new to say. When $k \geq 3$ things become more interesting.

Suppose now $k \geq 3$. If we regard $H$ as the set of $k$-subspaces of $V$ where a given non-trivial alternating $k$-linear form $\alpha$ identically vanishes, then $R_\downarrow(H)$ is just (the subspace of $\PG(V)$ corresponding to) the radical $\Rad(\alpha)$ of $\alpha$, that is
\[ \Rad(\alpha) ~:= ~ \{v\in V ~:~ \alpha(x_1,\ldots, x_{k-1},v) = 0, ~ \forall x_1,\ldots, x_{k-1} \in V\}.\]
Clearly, $R_\downarrow(H)$ is a proper subspace of $\PG(V)$ (otherwise $\alpha$ would be trivial). In fact, we shall prove later (Proposition~\ref{codimension lower radical}) that $R_\downarrow(H)$ has codimension at least $k$ in $\PG(V)$.

The upper radical $R^\uparrow(H)$ looks more intriguing than $R_\downarrow(H)$. It contains all $(k-1)$-dimensional subspaces of $V$ that meet $R_\downarrow(H)$ non-trivially, but, in general, it contains far more elements than just these.
Actually, $R^\uparrow(H)$ can be quite large even when $R_\downarrow(H)$ is small or even empty. Henceforth we shall focus our attention on $R^\uparrow(H)$.

We firstly state a few more definitions which will be useful in view of our investigation of $R^\uparrow(H)$. For a $(k-2)$-subspace $X$ of $V$, the set $(X)G_k$ of $k$-subspaces of $V$ containing $X$ is a subspace of $\cG_k(V)$. Let $(X)\cG_k$ be the geometry induced by $\cG_k(V)$ on $(X)G_k$ and put $(X)H := (X)G_k\cap H$. Then $(X)\cG_k \cong \cG_2(V/X)$ and either $(X)H = (X)G_k$ or $(X)H$ is a hyperplane of $(X)\cG_k$. In either case, by Proposition~\ref{Polar Space}, the point-line geometry ${\cS}_X(H) = ((X)G_{k-1}, (X)H)$ is a polar space of symplectic type (possibly a trivial one, when $(X)H = (X)G_k)$). Let $R_X(H) := \Rad({\cS}_X(H))$ be the radical of ${\cS}_X(H)$. The following is straightforward.
\begin{prop}\label{polar space bis}
$R^\uparrow(H)\cap (X)G_{k-1} ~ = ~ R_X(H)$ for every $(k-2)$-subspace $X$ of $V$.
\end{prop}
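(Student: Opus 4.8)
The statement is that $R^\uparrow(H)\cap (X)G_{k-1} = R_X(H)$ for every $(k-2)$-subspace $X$ of $V$. Let me sketch a proof.

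First, unpack both sides in terms of the defining alternating form $\alpha$ of $H$. An element $A\in(X)G_{k-1}$ is a $(k-1)$-subspace of $V$ containing $X$; write $A=\langle a_1,\dots,a_{k-2},a_{k-1}\rangle$ with $X=\langle a_1,\dots,a_{k-2}\rangle$. By definition, $A\in R^\uparrow(H)$ iff $\alpha(a_1,\dots,a_{k-1},x)=0$ for all $x\in V$, i.e. iff every $k$-subspace containing $A$ lies in $H$.

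Second, identify the other side. The polar space ${\cS}_X(H)$ lives on the point set $(X)G_{k-1}$, which under the isomorphism $(X)\cG_k\cong\cG_2(V/X)$ corresponds to the point set $\PG(V/X)$; its lines are the members of $(X)H$. By Proposition~\ref{Polar Space} applied to the hyperplane $(X)H$ of $(X)\cG_k\cong\cG_2(V/X)$, this is a symplectic polar space, and its radical $R_X(H)$ consists of those points $A\in(X)G_{k-1}$ such that every line of $\PG(V/X)$ through the corresponding point lies in $(X)H$ — equivalently, every $k$-subspace of $V$ containing $A$ lies in $(X)H=(X)G_k\cap H$. But a $k$-subspace containing $A$ automatically contains $X\subset A$, so "lies in $(X)H$" is the same as "lies in $H$". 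Hence $A\in R_X(H)$ iff every $k$-subspace of $V$ containing $A$ lies in $H$, which is exactly the condition $A\in R^\uparrow(H)$ intersected with the condition $A\in(X)G_{k-1}$.

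The two unpackings coincide, so the proposition follows. I expect no real obstacle here: the only point requiring a moment's care is the observation that for $A\supseteq X$, membership of a $k$-superspace of $A$ in $(X)H$ versus in $H$ is the same condition, and the (routine) verification that the radical of the symplectic polar space ${\cS}_X(H)$, transported through the isomorphism $(X)\cG_k\cong\cG_2(V/X)$, is described by "all $k$-superspaces of $A$ lie in $(X)H$" — this is just the definition of the radical of ${\cS}(H')$ given in Section~\ref{problems} applied with $H'=(X)H$ inside $\cG_2(V/X)$. So the proof is essentially a matter of chasing definitions through the isomorphism $(X)\cG_k\cong\cG_2(V/X)$.
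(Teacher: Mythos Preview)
Your proposal is correct and matches the paper's approach: the paper simply declares the proposition ``straightforward'' and gives no proof, and your definition-chase through the isomorphism $(X)\cG_k\cong\cG_2(V/X)$ is exactly the routine verification that word implies. The only content is your observation that for $A\supseteq X$ the conditions ``every $k$-superspace of $A$ lies in $H$'' and ``every $k$-superspace of $A$ lies in $(X)H$'' coincide, which is immediate.
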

In other words,
\[R^\uparrow(H) ~ = ~ \bigcup_{X\in G_{k-2}(V)}R_X(H)\]
where $G_{k-2}(V)$ stands for the set of the $(k-2)$-subspaces of $V$, namely  the points of $\cG_{k-2}(V)$. Recalling that the radical of a polar space of symplectic type always has even codimension in the underlying projective space, two cases must be distinguished.
\begin{enumerate}[1)]
\item
  {\em $n-k$ is even}. In this case $R_X(H)$ can be empty for some $X\in G_{k-2}(V)$. If this happens for all $X\in G_{k-2}(V)$, then $R^\uparrow(H) = \emptyset$.
\item
 {\em $n-k$ is odd}. Then $R_H(X)$ contains at least one point (a $(k-1)$-space) for every $X\in G_{k-2}(V)$. In this case $R_X(H)$ is far from being empty. However, it might happen that $R_X(H)$ is a singleton for every $X\in G_{k-2}(V)$. If this is the case then we say that $R^\uparrow(H)$ is
 \emph{spread-like}. This terminology is motivated by the fact that for $k = 3$,
 the set $R^\uparrow(H)$ is spread-like if and only if it is actually a spread of $\PG(V)$,  once it is regarded as a set of lines of $\PG(V)$.
\end{enumerate}

We are now ready to state the problems which we shall study in this paper.
They are essentially the same as those considered by Draisma and Shaw
\cite{DS10}, \cite{DS14}.

\begin{problem}\label{Prob1}
Let $k \geq 3$ and $H$ a hyperplane of $\cG_k(V)$. \\
{\rm (1.1)} Let $n-k$ be even. Is it possible that $R^\uparrow(H) = \emptyset$ for a suitable choice of $H$?  \\
{\rm (1.2)} Let $n-k$ be odd. Can it happen that $R^\uparrow(H)$ is spread-like?
\end{problem}

Problems (1.1) and (1.2) are somehow mutually related. Indeed, let $k < n-1$ be such that $n-k$ is even and let $H$ be a hyperplane of $\cG_k(V)$. Given a hyperplane $W$ of $V$, let $G_k(W)$ be the set of $k$-subspaces of $W$. Then $H(W) := H\cap G_k(W)$ is a hyperplane of the $k$-Grassmannian $\cG_k(W)$ of $W$. (Note that $G_k(W)\not\subseteq H$, otherwise $W\subseteq R_\downarrow(H)$, contradicting the fact that $R_\downarrow(H)$ has codimension at least $k$ in $V$). In Section~\ref{radicals} we will prove the following.

\begin{prop}\label{mutual relation}
With $n$ and $k$ as above, we have $R^\uparrow(H) = \emptyset$ if and only if $R^\uparrow(H(W))$ is spread-like for every hyperplane $W$ of $V$.
\end{prop}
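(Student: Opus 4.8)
The plan is to quotient out a $(k-2)$-subspace so as to replace the defining $k$-linear form of $H$ by an alternating \emph{bilinear} form, and then to reduce the statement to one elementary fact about alternating forms on even-dimensional spaces.

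Fix an alternating $k$-linear form $\alpha$ defining $H$. For a $(k-2)$-subspace $X=\langle x_1,\dots,x_{k-2}\rangle$ of $V$, let $\beta_X$ be the alternating bilinear form on $V/X$ defined by $\beta_X(v+X,\,w+X):=\alpha(x_1,\dots,x_{k-2},v,w)$; by multilinearity and antisymmetry this depends only on $X$, and a $k$-subspace $Y\supseteq X$ belongs to $H$ if and only if $\beta_X$ vanishes on the line $Y/X$ of $\PG(V/X)$. Hence, under the isomorphism $(X)\cG_k\cong\cG_2(V/X)$, the polar space $\cS_X(H)$ of Proposition~\ref{Polar Space} is exactly the symplectic polar space attached to $\beta_X$, and by Proposition~\ref{polar space bis} the set $R_X(H)=R^\uparrow(H)\cap(X)G_{k-1}$ is the set of points of $\PG(V/X)$ lying in the radical $\Rad(\beta_X)$. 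Since $\dim(V/X)=n-k+2$ is even, we get a first equivalence: $R^\uparrow(H)=\emptyset$ if and only if $\beta_X$ is non-degenerate for every $(k-2)$-subspace $X$ of $V$. The hyperplane $H(W)$ of $\cG_k(W)$ is the one defined by the restriction of $\alpha$ to $W$, so the same discussion carried out inside $W$ shows that, for a $(k-2)$-subspace $X\subseteq W$, the set $R_X(H(W))$ consists of the points of $\PG(W/X)$ lying in $\Rad(\beta_X|_{W/X})$; moreover $W/X$ is a hyperplane of $V/X$, and conversely every hyperplane of $V/X$ equals $W/X$ for a unique hyperplane $W$ of $V$ with $X\subseteq W$. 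As $\dim(W/X)=n-k+1$ is odd, $R_X(H(W))$ is a singleton exactly when $\dim\Rad(\beta_X|_{W/X})=1$, so a second equivalence holds: $R^\uparrow(H(W))$ is spread-like if and only if $\dim\Rad(\beta_X|_{W/X})=1$ for every $(k-2)$-subspace $X$ of $W$.

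With this dictionary the proposition reduces to a purely linear-algebraic statement, to be applied with $U=V/X$ (note $\dim U=n-k+2\geq 4$, since $n-k$ is even and $k<n-1$) for every $(k-2)$-subspace $X$: \emph{an alternating bilinear form $\gamma$ on an even-dimensional space $U$ is non-degenerate if and only if $\Rad(\gamma|_{W'})$ is $1$-dimensional for every hyperplane $W'$ of $U$}. If $\gamma$ is non-degenerate, a rank count settles it: passing to a hyperplane drops the rank by $0$ or $2$, but the rank stays even while being at most $\dim U-1$, so it drops by exactly $2$ and $\Rad(\gamma|_{W'})$ has dimension $1$. Conversely, if $\gamma$ is degenerate, then $\dim\Rad(\gamma)$ is even and positive, hence at least $2$; choosing a $2$-dimensional $P\subseteq\Rad(\gamma)$ and any hyperplane $W'\supseteq P$ (which exists as $\dim U\geq 4$) one gets $P\subseteq\Rad(\gamma|_{W'})$, so $\dim\Rad(\gamma|_{W'})\geq 2$. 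Feeding this back through the two equivalences, and using that for a fixed $X$ the hyperplanes of $V/X$ are precisely the images $W/X$ of the hyperplanes $W$ of $V$ through $X$, we conclude that $R^\uparrow(H)=\emptyset$ holds if and only if $\dim\Rad(\beta_X|_{W/X})=1$ for every hyperplane $W$ of $V$ and every $(k-2)$-subspace $X\subseteq W$, i.e.\ if and only if $R^\uparrow(H(W))$ is spread-like for every hyperplane $W$ of $V$.

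The one step carrying genuine content is the parity observation in the converse half of the linear-algebraic fact: a nonzero radical of an alternating form on an even-dimensional space has dimension at least $2$, so it cannot be shrunk to a line by passing to a hyperplane, and this is exactly what forces $R_X(H(W))$ to contain more than one point for a suitable $W$. The remainder is bookkeeping: checking via Propositions~\ref{Polar Space} and~\ref{polar space bis} that $R_X(H)$ really is the point set of $\Rad(\beta_X)$, recording that $H(W)$ is a proper hyperplane of $\cG_k(W)$ (because $G_k(W)\not\subseteq H$, as already remarked), and keeping track of the two parities $n-k+2$ (even) and $n-k+1$ (odd) so that ``$\beta_X$ non-degenerate on $V/X$'' and ``$R^\uparrow(H(W))$ spread-like'' genuinely are the matching conditions.
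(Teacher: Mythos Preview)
Your proof is correct and follows essentially the same approach as the paper's: both arguments hinge on the parity observation that the radical of $\beta_X$ on the even-dimensional space $V/X$ is either zero or at least $2$-dimensional, and then pass between $V/X$ and its hyperplanes $W/X$. The paper phrases this geometrically in terms of the polar spaces $\cS_X(H)$ and $\cS_X(H(W))$ (a line in the induced radical meets the ambient radical; a nonempty ambient radical contains a line that can be placed inside some $W/X$), whereas you package the same content as a standalone linear-algebra lemma about alternating forms---but the underlying idea is identical.
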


We have stated problems (1.1) and (1.2) in the most general form, for any $k \geq 3$. However, it is clear from the definition of $R^\uparrow(H)$ that, if we have proved that $R^\uparrow(H)\neq \emptyset$ for $k = 3$ and $n$ odd for any hyperplane $H$ whenever $\KK$ belongs to a certain class $\bf C$ of fields, then the same holds for any choice of $k$ and $n$ with $k\geq 3$ and $n-k$ even, provided that $\KK\in {\bf C}$. Similarly, if we know that for $\KK\in {\bf C}$, $k = 3$ and $n$ even, the set $R^\uparrow(H)$ is never a spread of $\PG(V)$ then, as long as $\KK$ is chosen in $\bf C$, the radical $R^\uparrow(H)$ is never spread-like, for any $k\geq 3$ with $n-k$ odd. So, we can replace problems (1.1) and (1.2) with their following special cases.

\begin{problem}\label{Prob2}
Let $k = 3$ and $H$ a hyperplane of $\cG_3(V)$. \\
{\rm (2.1)} Let $n$ be odd. Can it happen that $R^\uparrow(H) = \emptyset$?  \\
{\rm (2.2)} Let $n$ be even. Is it possible that $R^\uparrow(H)$ is a line-spread of $\PG(V)$?
\end{problem}

In the next subsection we shall briefly survey the answers known from the literature to problems  (2.1) and (2.2) and state a few new results of our own, to be proved later in this paper.

\subsection{Answers}\label{answers}

We keep the notation of the previous subsection. In particular $n = \dim(V)$ and $\KK$ is the underlying field of $V$. However, throughout this section we assume $k = 3$ (whence $n \geq 5$).

In some of the results to be stated in this subsection the field $\KK$ will be
assumed to have cohomological dimension at most $1$. We recall that, for a prime $p$, the \emph{$p$-{cohomological dimension}} of a field $\KK$ is the $p$-cohomological dimension of the Galois group of the separable closure of $\KK$. The \emph{cohomological dimension} of $\KK$ is the supremum of its $p$-cohomological dimensions for $p$ ranging in the set of all primes (Serre \cite{Serre}, Gille and Szamuely \cite{C1}). For instance, algebraically closed fields have cohomological dimension $0$ and finite fields have cohomological dimension $1$, while the field $\RR$ of real numbers has infinite cohomological dimension.

Henceforth we denote the class of fields of cohomological dimension $0$ or $1$ by the symbol ${\bf Cd}\{0,1\}$. In some of the following theorems we assume that $\KK\in {\bf Cd}\{0,1\}$ is perfect. We warn that non-perfect fields exist that belong to ${\bf Cd}\{0,1\}$ (see Serre \cite[II, \S 3.1]{Serre}, also the example mentioned at the end of the next paragraph).

We shall also consider quasi-algebraically closed fields. Recall that a field $\KK$ is said to be \emph{quasi-algebraically closed} if every homogeneous equation with coefficients in $\KK$, in $t$ unknowns and of degree $d<t$ always  admits non-trivial solutions in $\KK^t$. For instance, all finite fields are quasi-algebraically closed (see \cite[II, \S 3.3]{Serre}). Quasi-algebraically closed fields form a proper subclass of ${\bf Cd}\{0,1\}$ (see \cite[II, 3.2]{Serre}). Note also that non-perfect quasi-algebraically closed fields exist. For instance, a transcendental extension of degree 1 of an algebraically closed field is quasi-algebraically closed \cite[II, \S 3.3]{Serre}, but in positive characteristic it is non-perfect.

We now turn to problems~\ref{Prob2},  (2.1) and (2.2). It is not difficult to prove that when $n = 5$, up to isomorphism, only two hyperplanes exist in $\cG_3(V)$ (see Section~\ref{low dim}, Theorem~\ref{l5}). We have $R^\uparrow(H) \neq \emptyset$ in both of these cases. Turning to $n = 6$, Revoy \cite{Revoy79} gives a complete classification of alternating $3$-linear forms.  We shall report on it in \S~\ref{n = 6}. It turns out that a form giving rise to a hyperplane $H$ of $\cG_3(V)$ where $R^\uparrow(H)$ is a spread of $\PG(V)$ exists precisely when $\KK$ is not quadratically closed  (Section~\ref{low dim}, Theorem~\ref{case n=6}). To sum up,

\begin{theorem}\label{Class56}
Let $n = 5$. Then, $R^\uparrow(H) \neq \emptyset$ for any hyperplane $H$ of $\cG_3(V)$ and any choice of the underlying field $\KK$ of $V$. \\
Let $n = 6$. Then, a hyperplane $H$ of $\cG_3(V)$ such that $R^\uparrow(H)$ is a spread exists if and only if $\KK$ is not quadratically closed.
\end{theorem}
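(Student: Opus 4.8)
The plan is to treat the two assertions separately, relying on explicit classifications of alternating $3$-forms in dimensions $5$ and $6$ and on Proposition~\ref{polar space bis} to compute $R^\uparrow(H)$ fibre-by-fibre over the points of $\cG_1(V)$.

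For $n=5$: here $k=3$ and $n-k=2$ is even, so a priori $R^\uparrow(H)$ could be empty. I would first reduce to the classification of nonzero alternating $3$-linear forms $\alpha$ on a $5$-dimensional $V$ up to the action of $\GL(V)$ (equivalently, up to $\PGL(V)$ on hyperplanes of $\cG_3(V)$). Since $\bigwedge^3 V \cong (\bigwedge^2 V)^*$ as $\GL(V)$-modules when $\dim V = 5$, an alternating $3$-form corresponds to an alternating $2$-form, i.e.\ is determined by its radical-type invariant; one checks there are exactly two orbits, distinguished by whether $\Rad(\alpha)=R_\downarrow(H)$ is zero or a point (Theorem~\ref{l5}, to be proved in Section~\ref{low dim}). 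In both cases I compute $R^\uparrow(H)$ directly. For a point $p$ with $p\subseteq R_\downarrow(H)$, every plane through $p$ lies in $H$, so $R^\uparrow(H)$ contains every plane through $p$ and is in particular nonempty. In the remaining orbit, $\Rad(\alpha)=0$; but then I pick any point $p\in\PG(V)$, form the $4$-dimensional quotient $V/\langle p\rangle$, and observe that the induced form $\alpha_p$ (obtained by fixing $p$ as the first argument) is an alternating $2$-form on a $4$-dimensional space, hence either non-degenerate or with a radical; a short direct computation with a normal form shows $R_p(H)$ is always nonempty, so $R^\uparrow(H)\neq\emptyset$. The key point is that $n-k=2$ is small enough that the classification is completely explicit and leaves no room for an empty upper radical.

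For $n=6$: now $k=3$ and $n-k=3$ is odd, so $R^\uparrow(H)$ is never empty; the question is whether it can be spread-like, equivalently (by the remark in the excerpt, for $k=3$) whether it is a line-spread of $\PG(V)\cong\PG(5,\KK)$. I would invoke Revoy's classification \cite{Revoy79} of alternating $3$-forms on a $6$-dimensional space, to be recalled in \S\ref{n = 6}: up to $\GL(V)$ there are finitely many normal forms, and the ``generic'' one is $\alpha = e_1^*\wedge e_2^*\wedge e_3^* + e_4^*\wedge e_5^*\wedge e_6^*$ (decomposable into two complementary $3$-subspaces), together with forms of smaller rank. For each normal form I compute $R^\uparrow(H)$ as a set of lines of $\PG(V)$ and check the spread condition: every point of $\PG(V)$ lies on exactly one line of $R^\uparrow(H)$. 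The forms of non-maximal rank have nonzero radical $R_\downarrow(H)$, and then $R^\uparrow(H)$ contains a full pencil of lines through each point of $R_\downarrow(H)$, so it is not a spread. So the whole question concentrates on the rank-$6$ (non-degenerate) forms. For $\alpha = e_1^*\wedge e_2^*\wedge e_3^* + e_4^*\wedge e_5^*\wedge e_6^*$ one computes that a line $\langle u,v\rangle$ lies in $R^\uparrow(H)$ iff $\alpha(u,v,x)=0$ for all $x$, and writing $u = (u',u'')$, $v=(v',v'')$ in the decomposition $V = U'\oplus U''$ with $\dim U' = \dim U'' = 3$, this says $u'\wedge v' = 0$ in $\bigwedge^2 U'$ and $u''\wedge v''=0$ in $\bigwedge^2 U''$, i.e.\ $\{u',v'\}$ spans at most a line in $U'$ and likewise in $U''$. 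A dimension count then shows that $R^\uparrow(H)$ consists of the lines meeting both $U'$ and $U''$ in a point (plus possibly some lines inside $U'$ or $U''$), and one checks directly whether this is a spread --- it is precisely a spread when $\KK$ is quadratically closed, because over such a field the ``norm form'' obstruction that would otherwise produce a point lying on two or on zero members of $R^\uparrow(H)$ disappears, while over a non-quadratically-closed $\KK$ there is a different rank-$6$ normal form (the one built from a quadratic field extension, appearing in Revoy's list exactly when $[\KK:\KK^2\text{-closure}]$ is nontrivial) whose upper radical is a genuine spread. Conversely, if $\KK$ is quadratically closed, I go through each of Revoy's normal forms and exhibit in every case a point of $\PG(V)$ lying on no member or on at least two members of $R^\uparrow(H)$, proving no hyperplane yields a spread.

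The main obstacle I anticipate is the rank-$6$ case for $n=6$: one must carry out Revoy's classification carefully enough to see exactly which normal forms occur over which fields (the split form versus the ``twisted'' form coming from a quadratic extension), and then for the twisted form verify the spread axioms, which amounts to showing that a certain norm map $N\colon E\to\KK$ associated to a quadratic extension $E/\KK$ is surjective and injective on $\PG$-points precisely when $\KK$ is not quadratically closed --- this is where the field-theoretic hypothesis enters in an essential and non-formal way, and it is the heart of Theorem~\ref{case n=6}.
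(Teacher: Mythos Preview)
Your overall strategy---classify the alternating $3$-forms and compute $R^\uparrow(H)$ for each type---is exactly the paper's approach (Theorems~\ref{l5} and~\ref{case n=6}). But there are execution errors in both parts.

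For $n=5$: the two orbits are distinguished by whether $\dim\Rad(\alpha)$ equals $0$ or $2$, not ``zero or a point''; rank $4$ is excluded by Proposition~\ref{rank 4}. More to the point, your claim that $R_p(H)$ is nonempty for \emph{every} $p$ in the rank-$5$ case is false: with $h=\underline{123}+\underline{145}$ the induced alternating form at $p=[e_1]$ is $e^2\wedge e^3+e^4\wedge e^5$ on $V/\langle e_1\rangle$, which is non-degenerate, so $R_{[e_1]}(H)=\emptyset$. Fortunately you need only one pole, not all points, so the conclusion survives; but the argument as written is wrong.

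For $n=6$: your analysis of the split form $T_3$ is the serious problem. You correctly compute that $L=\langle u,v\rangle\in R^\uparrow(H)$ iff $u'\wedge v'=0$ and $u''\wedge v''=0$, i.e.\ $L$ meets both $[U']$ and $[U'']$ in a point. But then through any point $[x]\in[U']$ the lines $\langle x,y\rangle$ with $[y]\in[U'']$ \emph{all} lie in $R^\uparrow(H)$; hence $T_3$ is never a spread, over any field. Your sentence ``it is precisely a spread when $\KK$ is quadratically closed'' is therefore false and, as stated, would contradict the theorem. You also omit type $T_4$ from Revoy's list, which must be checked separately (and is likewise never a spread). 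The paper shows that the only types yielding a spread are $T^{(1)}_{10,\lambda}$ and $T^{(2)}_{10,\lambda}$, whose defining conditions (an irreducible quadratic $p_\lambda(t)\in\KK[t]$) hold precisely when $\KK$ is not quadratically closed. Your final paragraph gestures at this correct picture, but it is inconsistent with your treatment of $T_3$ earlier.
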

\begin{note}
When $n = 6$ and $\KK$ is finite, examples where $R^\uparrow(H)$ is a spread are also constructed by Draisma and Shaw \cite{DS14}.
\end{note}

Assuming that $\KK$ is perfect and belongs to ${\bf Cd}\{0,1\}$, Cohen and Helminck \cite{CH88} give a complete classification of alternating $3$-linear forms for $n=7$. Referring the reader to \S~\ref{n = 7} for details, we only mention here the following byproduct of that classification.

\begin{theorem}\label{Class7}
Let $n = 7$ and let  $\KK$ be a perfect field in the class ${\bf Cd}\{0,1\}.$ Then $R^\uparrow(H) \neq \emptyset$ for every hyperplane $H$ of $\cG_3(V)$.
\end{theorem}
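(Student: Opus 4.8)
The plan is to reduce Theorem~\ref{Class7} to the classification of alternating $3$-linear forms on a $7$-dimensional space over a perfect field $\KK\in{\bf Cd}\{0,1\}$ due to Cohen and Helminck~\cite{CH88}, and then to verify, form by form on their list, that the upper radical $R^\uparrow(H)$ is non-empty. Recall that for $n=7$, $k=3$ we are in the case where $n-k=4$ is even, so in principle $R^\uparrow(H)$ could be empty; the content of the theorem is that over these fields it never is. By Proposition~\ref{polar space bis} it suffices, for a given defining form $\alpha$ of $H$, to exhibit a single $1$-subspace $X=\langle x\rangle$ of $V$ for which the symplectic polar space $\cS_X(H)$ on $V/X$ has non-trivial radical; equivalently, a point $x$ and a hyperplane $A$ of $V$ containing $x$ with $\alpha(x,a,b)=0$ for all $a,b\in A$. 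Since $\dim(V/X)=6$ is even, this is a genuine constraint: the induced alternating bilinear form $\alpha_x(\bar a,\bar b):=\alpha(x,a,b)$ on $V/\langle x,\Rad\rangle$ must be degenerate, i.e. $x$ must be chosen so that $\alpha_x$ has rank $\le 4$ rather than the generic rank $6$.

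The key steps are as follows. First, recall from \S~\ref{n = 7} the Cohen--Helminck list: over a perfect field in ${\bf Cd}\{0,1\}$ the alternating $3$-forms on a $7$-space fall into finitely many orbits under $\GL(V)$, distinguished by the dimension of $\Rad(\alpha)$ and by the structure of the "reduced" form on a complement of the radical. For any form whose radical $\Rad(\alpha)=R_\downarrow(H)$ is non-zero, we are done immediately: every $(k-1)=2$-subspace meeting $R_\downarrow(H)$ lies in $R^\uparrow(H)$, so $R^\uparrow(H)\neq\emptyset$. Thus attention is restricted to the non-degenerate forms on the $7$-space, of which the list contains only a short bounded number of types. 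For each such type one fixes the explicit normal form given by Cohen--Helminck and then produces the required vector $x$: concretely, one looks for $x\in V$ such that the contraction $\iota_x\alpha$, viewed as an alternating bilinear form on $V/\langle x\rangle$, is degenerate. Writing $\iota_x\alpha$ in coordinates, its Pfaffian (a polynomial of degree $3$ in the $7$ coordinates of $x$, the Pfaffian of a $6\times 6$ alternating matrix after quotienting) must vanish; this is a single homogeneous cubic equation in $7$ variables over $\KK$. Here the hypothesis $\KK\in{\bf Cd}\{0,1\}$ enters decisively: quasi-algebraically closed fields, and more generally the relevant $C_1$-type condition, guarantee that a homogeneous form of degree $3<7$ in $7$ variables has a non-trivial zero — so such an $x$ always exists. (For the finitely many normal forms where one prefers an explicit witness, one simply writes one down.)

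The main obstacle — and where genuine care is needed — is the interface between the abstract $C_1$/cohomological-dimension argument and the finite case analysis. The clean statement "a cubic in $7$ variables over a $C_1$ field has a nontrivial zero, hence $x$ exists" is only literally available for quasi-algebraically closed fields; for the broader class ${\bf Cd}\{0,1\}$ (perfect) one must instead lean on the Cohen--Helminck normal forms, which are precisely the tool that makes the argument uniform over this class. So the real work is: (i) transcribe each non-degenerate normal form; (ii) compute the locus $\{\iota_x\alpha \text{ degenerate}\}$, i.e. the zero set of the relevant Pfaffian cubic, and check it is non-empty — either by direct inspection of the monomials, or, when convenient, by invoking that over a perfect $C_1$-field the associated cubic hypersurface has a rational point and the singular / "bad" sublocus where $x\in\Rad(\iota_x\alpha)$ is a proper subvariety one can avoid; (iii) observe that this exhausts the list. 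One should also double-check the edge behaviour: one needs $x\notin\Rad(\iota_x\alpha)$ and the chosen hyperplane $A$ to actually be totally isotropic for $\alpha_x$ of the right dimension, so that the point $\langle x\rangle$ genuinely lands in $R_X(H)$ for the corresponding $(k-2)=1$-space $X$; this is automatic once $\iota_x\alpha$ is degenerate on $V/\langle x\rangle$, since its radical then supplies the extra direction. Finally, one contrasts this with the sharpness of the result — over $\KK\le\RR$ and $(n,k)=(7,3)$ the cubic may fail to have a zero realising degeneracy — which is exactly the conjectured exceptional case flagged in the abstract, and explains why the perfect-$C_1$ hypothesis cannot simply be dropped.
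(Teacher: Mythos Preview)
Your overall plan—reduce to the Cohen–Helminck classification, dispose of the case $R_\downarrow(H)\neq\emptyset$ trivially, and then verify for each rank-$7$ normal form that $R^\uparrow(H)\neq\emptyset$—is exactly what the paper does (Theorem~\ref{tspr} and Corollary~\ref{n=7 conclusione1}). However, several of the supporting details you add are wrong and would derail an implementation.

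Most importantly, the pole locus $P(H)=\{[x]:\iota_x\alpha \text{ degenerate on } V/\langle x\rangle\}$ is a \emph{quadric}, not a cubic. The seven $6\times 6$ sub-Pfaffians of the $7\times 7$ alternating matrix of $\iota_x\alpha$ are indeed cubic in $x$, but since $x$ always lies in the kernel they factor as $x_i\cdot q(x)$ for a single quadratic form $q$; see the closing remark in the proof of Theorem~\ref{tspr} and \cite[Theorem~3.2]{DS10}. Your ``$C_1$ gives a zero of a cubic in $7$ variables'' heuristic therefore does not apply; the genuine obstruction is an anisotropic quadratic form in $7$ variables, and this is precisely what the octonionic example over $\RR$ realises (Note~\ref{Class7 note}). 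The hypothesis that $\KK$ is perfect and in ${\bf Cd}\{0,1\}$ is used \emph{only} to invoke the Cohen–Helminck list, not any Chevalley–Warning-type solvability statement; you repeatedly blur ${\bf Cd}\{0,1\}$ with $C_1$ after correctly distinguishing them. Two smaller slips: your ``equivalently, a hyperplane $A\ni x$ with $\alpha(x,a,b)=0$ for all $a,b\in A$'' is far stronger than degeneracy of $\iota_x\alpha$ on $V/\langle x\rangle$ (it demands a $5$-dimensional totally isotropic subspace, forcing rank $\le 2$ rather than $\le 4$); and the edge-check ``$x\notin\Rad(\iota_x\alpha)$'' is vacuous, since $\alpha(x,x,\cdot)\equiv 0$ always. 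Once these embellishments are stripped out, the honest proof is simply to read off from the explicit descriptions in Theorem~\ref{tspr} that $P(H)\neq\emptyset$ for each of $T_5,\dots,T_9,T^{(r)}_{11,\lambda}$.
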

\begin{note}\label{Class7 note}
The hypothesis $\KK \in{\bf Cd}\{0,1\}$ cannot in general be removed from Theorem~\ref{Class7}. For instance, as Draisma and Shaw show in~\cite{DS10},~\cite{DS14}, when $n = 7$ and $\KK = \RR$ (but any subfield of $\RR$ would do the job as well) the Grassmannian $\cG_3(V)$ admits a hyperplane $H$ with $R^\uparrow(H) = \emptyset$. This is related to the exceptional cross product $\times:V(7,\RR)\times V(7,\RR)\rightarrow V(7,\RR)$ (see Brown and Gray \cite{VCP} or Lounesto \cite{Lou} for the definition and properties of this product).
However, there are also fields not in the class ${\bf Cd}\{0,1\}$ for
which $R^{\uparrow}(H)\neq\emptyset$ for any choice of the hyperplane $H$.
Indeed, it can be seen as a direct consequence of the second claim of
Theorem~\ref{Class56}  and Proposition~\ref{mutual relation} that if
$\KK$ is quadratically closed and $n=7$, then $R^{\uparrow}(H)\neq\emptyset$.
\end{note}

In Section~\ref{Sect4}
 of this paper, exploiting the classification of Cohen and Helminck \cite{CH88} we shall prove the following:
\begin{theorem}\label{main 1}
Let $n = 8$ and take $\KK$ to be a perfect field in the class ${\bf Cd}\{0,1\}$. Assume moreover that
\begin{itemize}
\item[$(*)$] every homogeneous equation of degree $3$ in $8$ unknowns with coefficients in $\KK$ admits non-trivial solutions in $\KK^8$.
\end{itemize}
Then, $R^\uparrow(H)$ is never a spread, for any hyperplane $H$ of $\cG_3(V)$.
\end{theorem}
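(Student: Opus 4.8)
The plan is to reduce the claim to an existence statement about certain $(k-1)$-subspaces in $V/X$ for suitable $(k-2)$-subspaces $X$, and then play the two problems of Problem~\ref{Prob2} against each other via Proposition~\ref{mutual relation}. Since $n = 8$ and $k = 3$, we have $n - k = 5$ odd, so by case~2) above every $R_X(H)$ with $X$ a point of $\PG(V)$ is nonempty; the question is whether it can always be a singleton. Suppose, for contradiction, that $R^\uparrow(H)$ is a line-spread of $\PG(V)$ for some hyperplane $H$ of $\cG_3(V)$. The first step is to pick a hyperplane $W$ of $V$ (so $\dim W = 7$) and restrict: $H(W) = H \cap G_3(W)$ is a hyperplane of $\cG_3(W)$ by the discussion preceding Proposition~\ref{mutual relation}, and $\KK$ still satisfies the hypotheses of Theorem~\ref{Class7} — it is a perfect field in ${\bf Cd}\{0,1\}$. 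Hence $R^\uparrow(H(W)) \neq \emptyset$: there is a line $\ell \subset W$ with $(\ell)G_3 \cap G_3(W) \subseteq H(W)$, equivalently $\alpha(a_1, a_2, x) = 0$ for all $x \in W$, where $\langle a_1, a_2 \rangle = \ell$ and $\alpha$ is the form defining $H$.

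The heart of the argument is to upgrade such an $\ell$ to an element of $R^\uparrow(H)$ itself, i.e.\ to show $\alpha(a_1, a_2, x) = 0$ for all $x \in V$, not merely $x \in W$. Here is where the odd/even parity and condition $(*)$ enter. Fix a point $p = \langle v \rangle \subset \ell$ of $\PG(V)$. The map $x \mapsto \alpha(a_1, a_2, x)$ is a linear functional on $V$; its kernel either is all of $V$ (in which case $\ell \in R^\uparrow(H)$ and we are essentially done deriving a contradiction — see the last step) or is a hyperplane $W'$ of $V$. If it is a hyperplane, then $\ell \subset W'$ but $\ell \notin R^\uparrow(H)$. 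I would run this for a carefully chosen family of hyperplanes $W$: the key observation is that for a point $p$, the set $\{\ell \in R^\uparrow(H) : p \in \ell\}$ is, by Proposition~\ref{polar space bis} applied with $X = p$, exactly $R_p(H) = \Rad(\cS_p(H))$, a subspace of $(p)\cG_2 \cong \cG_2(V/p)$; in $\PG(V/p) \cong \PG(5, \KK)$ this radical corresponds to a single point precisely when $R^\uparrow(H)$ is spread-like at $p$. The contradiction will come from the cubic hypothesis $(*)$: condition $(*)$ is exactly the input needed so that a certain auxiliary variety — the locus of $3$-subspaces (equivalently, of vectors $v$ producing a degenerate induced alternating form $\alpha(v, -, -)$ on $V/\langle v\rangle$, an alternating form on a $7$-dimensional space which is automatically degenerate, so rather the locus where the radical jumps) — cannot avoid a rational point forcing the radical of $\cS_p(H)$ to exceed one point for some $p$. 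Concretely, I expect the determinantal/Pfaffian conditions describing "$\dim \Rad(\cS_p(H)) \geq 2$" to cut out a hypersurface of degree $3$ in the relevant $\PG(7,\KK)$ parametrizing the ambient, and $(*)$ guarantees a $\KK$-rational point on it.

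Thus the main steps are: (i) restrict $H$ to hyperplanes $W$ of $V$ and invoke Theorem~\ref{Class7} to get, for each such $W$, a line of $R^\uparrow(H(W))$; (ii) using Proposition~\ref{mutual relation} in the contrapositive — if $R^\uparrow(H)$ is spread-like then in particular it is not empty, and one checks directly that a spread-like $R^\uparrow(H)$ cannot arise by the "mutual relation" mechanism unless every restriction $R^\uparrow(H(W))$ is itself spread-like — reduce to controlling the restrictions; (iii) invoke the Cohen–Helminck classification for $n = 7$ to enumerate the possible isomorphism types of $H(W)$ and, for each type, read off $R^\uparrow(H(W))$; (iv) show that at least one type must occur for which $R^\uparrow(H(W))$ is \emph{not} spread-like, the obstruction to all-spread-like behaviour being supplied by $(*)$ applied to the degree-$3$ equation describing when the $7$-dimensional restriction of $\alpha$ degenerates more than generically. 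Step (iv) is the main obstacle: it requires extracting from the Cohen–Helminck list the precise shape of $R^\uparrow$ for each normal form of an alternating $3$-form in dimension $7$ over a perfect field in ${\bf Cd}\{0,1\}$, and then pinning down which forms in dimension $8$ restrict, for \emph{every} choice of $W$, only to those $7$-dimensional types with a one-point upper radical — showing this is impossible is exactly where $(*)$ does its work, presumably by exhibiting a rational point on a cubic hypersurface that forces a "bad" restriction. I also anticipate having to handle separately the small number of degenerate normal forms (those with $\Rad(\alpha) \neq 0$), where the reduction to lower dimension via the radical makes the claim comparatively easy.
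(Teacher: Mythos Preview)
Your overall instinct — restrict to hyperplanes of $V$, invoke the Cohen--Helminck classification in dimension $7$, and spend hypothesis $(*)$ on a degree-$3$ obstruction — matches the paper's strategy, but the specific mechanism you propose has two genuine gaps. First, Proposition~\ref{mutual relation} is misapplied: it relates $R^\uparrow(H) = \emptyset$ (for $n-k$ even) to spread-likeness of all hyperplane restrictions, whereas here $n - k = 5$ is odd, and for the restrictions $H(W)$ one has $\dim W - k = 4$ even, so ``spread-like'' is not even defined for $R^\uparrow(H(W))$. The correct bridge is a different statement (Lemma~\ref{L2-i} in the paper): $R^\uparrow(H)$ is a spread if and only if $H(V')$ is \emph{hexagonal} (of type $T_9$) for every hyperplane $V'$ of $V$. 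Second, your proposed source of the cubic — the locus of points $p \in \PG(V)$ with $r(p) \geq 3$, cut out by the $6 \times 6$ sub-Pfaffians of an $8 \times 8$ skew matrix whose entries are linear in $p$ — is not a single hypersurface: it is defined by many degree-$3$ equations, of expected codimension greater than $1$, and hypothesis $(*)$ says nothing about common zeros of a \emph{system} of cubics.

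The paper's cubic lives instead in the dual space $\PG(V^*)$. Granting Lemma~\ref{L2-i}, one must exhibit a hyperplane $V'$ with $H(V')$ not hexagonal. For hexagonal $H(V')$ the pole set $P(H(V'))$ is a nonsingular quadric in $\PG(V')$; the paper computes the discriminant of this quadric as $V'$ varies over $\PG(V^*)$ and shows — after first normalizing $h$ to a specific shape (Lemma~\ref{thmn=8}) and then a substantial symbolic computation — that this discriminant is the \emph{cube} of a single homogeneous cubic $\Delta(c_1, \dots, c_8)$. Hypothesis $(*)$ then yields a $\KK$-rational zero of $\Delta$, hence a hyperplane $V'$ with singular pole quadric, so $H(V')$ is not hexagonal, contradicting Lemma~\ref{L2-i}. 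Your sketch is missing both the hexagonal characterization of the restrictions and the identification of the correct cubic; the latter is not conceptual but genuinely computational, and nothing in your outline indicates how you would arrive at it.
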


Hypothesis $(*)$ of Theorem~\ref{main 1} holds if either $\KK$ is quasi-algebraically closed or every polynomial $p(t)\in \KK[t]$ of degree $3$ admits at least one zero in $\KK$. Therefore, if $n = 8$ and $\KK$ is algebraically closed then $R^\uparrow(H)$ is never a spread. This answers a question raised by Draisma and Shaw in \cite[Remark 9]{DS14}.

The next statement (to be proved in Section~\ref{Sect4}) is all we can say at the moment about $R^\uparrow(H)$ when $n > 9$ is even.

\begin{theorem}\label{main 2}
Let $\KK$ be a finite field and suppose $n\equiv 4\pmod6$. Then $R^\uparrow(H)$ is never a spread, for any hyperplane $H$ of $\cG_3(V)$.
\end{theorem}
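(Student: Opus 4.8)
\textbf{Proof proposal for Theorem~\ref{main 2}.}

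The plan is to reduce the statement about arbitrary even $n\equiv 4\pmod 6$ to the already-known case $n=10$ (which is covered by Theorem~\ref{main 1}, since finite fields are quasi-algebraically closed and hence satisfy hypothesis $(*)$ in every number of unknowns). First I would observe that if $R^\uparrow(H)$ is a spread of $\PG(V)$ for some hyperplane $H$ of $\cG_3(V)$, then in particular $R^\uparrow(H)$ contains a line $L\subset V$, i.e.\ a $2$-subspace; and $R^\uparrow(H)$ induces, on every subspace $U$ of $V$ containing $L$ with $\dim(U)$ even, a collection of $2$-subspaces of $U$ which I claim is forced to be a spread of $\PG(U)$. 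The point is that if $\alpha$ is the alternating $3$-linear form defining $H$, then the restriction $\alpha|_U$ is a non-trivial alternating $3$-linear form of $U$ (non-triviality being guaranteed because $R_\downarrow(H)=\Rad(\alpha)$ has codimension at least $3$ in $V$, so we may choose $U$ to meet $\Rad(\alpha)$ in a subspace of codimension at least $3$ in $U$), and it defines a hyperplane $H_U$ of $\cG_3(U)$ with $R^\uparrow(H_U)=R^\uparrow(H)\cap G_2(U)$.

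The key combinatorial step is to show that if $R^\uparrow(H)$ is a spread of $\PG(V)$, then one can choose a subspace $U$ of dimension $10$ (note $10\equiv 4\pmod 6$, so $\dim(V)-10$ is a multiple of $6$, in particular even, so we can descend in steps of $2$) such that $R^\uparrow(H)\cap G_2(U)$ is again a spread of $\PG(U)$. I would do this by a dimension-shaving argument: pick $L\in R^\uparrow(H)$, and extend $L$ to subspaces $V=U_0\supset U_1\supset\cdots$ with $\dim(U_{i+1})=\dim(U_i)-2$ and $L\subseteq U_i$. Since $R^\uparrow(H)$ is a spread, every $2i$-dimensional subspace of $V/L$—equivalently every subspace of $V$ properly containing $L$ of even dimension—contains members of the spread, and on such a $U$ the induced set $R^\uparrow(H)\cap G_2(U)$ is a set of pairwise disjoint (away from trivial intersections) lines covering $\PG(U)$, hence a spread, \emph{provided} the restricted form is still non-trivial. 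Non-triviality along the chain is the only thing that can fail, so I must choose the chain $U_0\supset U_1\supset\cdots$ to stay generic with respect to $\Rad(\alpha|_{U_i})$ at each stage; since finite fields are infinite enough relative to the finitely many "bad" subspaces, a counting / avoidance argument secures a chain along which every restriction is non-trivial, terminating at some $U$ with $\dim(U)=10$ and $R^\uparrow(H)\cap G_2(U)$ a spread of $\PG(U)$. This contradicts Theorem~\ref{main 1} applied with $n=10$.

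The main obstacle I anticipate is precisely the control of non-triviality of the restricted form along the descent, together with the verification that the induced point-set really is a spread and not merely a partial spread of the smaller projective space: a priori, restricting could collapse the spread structure, or the restricted form could be trivial on $U$ even though generic choices were made. To handle the spread-preservation I would use Proposition~\ref{polar space bis}: the members of $R^\uparrow(H)$ through a fixed $(k-2)=1$-subspace $X$ of $U$ form the radical $R_X(H)$ of a symplectic polar space on $(X)G_2\cong\cG_2(V/X)$, and since $n-k$ is odd this radical is non-empty of even codimension; restricting to $U$ replaces $V/X$ by $U/X$, whose dimension is also odd, so again the radical is non-empty, and "spread-like for $V$" forces each such radical to be a single point, a property that passes to $U$. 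To handle non-triviality I would invoke Proposition~\ref{codimension lower radical} (codimension of $R_\downarrow$ is at least $k=3$) to keep enough room, and an elementary avoidance count over the finite field $\KK$ to dodge the finitely many subspaces of $V$ on which some restriction degenerates. Once the reduction to $n=10$ is in place, Theorem~\ref{main 1} closes the argument immediately.
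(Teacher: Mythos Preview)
Your proposal has two genuine gaps, one of which is fatal.

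First, the base case is wrong. Theorem~\ref{main 1} is stated and proved only for $n=8$, and $8\equiv 2\pmod 6$, so it does not cover the case $n=10$ to which you want to reduce. No result in the paper handles $n=10$ independently of Theorem~\ref{main 2} itself.

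Second, and more seriously, the equality $R^\uparrow(H_U)=R^\uparrow(H)\cap G_2(U)$ that you assert is false; only the inclusion $R^\uparrow(H)\cap G_2(U)\subseteq R^\uparrow(H_U)$ holds. A line $\ell\subset U$ lies in $R^\uparrow(H_U)$ as soon as every plane \emph{of $U$} through $\ell$ lies in $H$, which is strictly weaker than requiring every plane of $V$ through $\ell$ to lie in $H$. In terms of Proposition~\ref{polar space bis}: for $p\in[U]$, the polar space $\cS_p(H_U)$ is the restriction of $\cS_p(H)$ to the subspace $[U/p]$ of $[V/p]$, and the radical of a symplectic form restricted to a codimension-$2$ subspace is typically larger, not smaller, than the original radical. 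So even if every $R_p(H)$ has rank $1$, the radicals $R_p(H_U)$ can have rank $3$, and $R^\uparrow(H_U)$ need not be spread-like. Your claim that ``spread-like for $V$ forces each such radical to be a single point, a property that passes to $U$'' is precisely the step that fails, and no genericity or avoidance argument repairs it.

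The paper's proof takes a completely different route: a direct double-counting over $\FF_q$. One counts point--plane flags $(p,[X])$ with $X\in H$ and $p\in[X]$ in two ways. On one hand this equals $(q^2+q+1)\,|H|$. On the other hand, assuming $R^\uparrow(H)$ is a spread, every $\cS_p(H)$ is a symplectic polar space in $\PG(V/p)$ with one-dimensional radical, hence with a fixed number $M$ of lines depending only on $q$ and $r:=(n-2)/2$; so the flag count equals $M\cdot(q^n-1)/(q-1)$. Comparing gives the divisibility $(q^2+q+1)\mid M\cdot(q^n-1)/(q-1)$, and a short computation modulo $q^2+q+1$ shows this fails exactly when $r\equiv 1\pmod 3$, i.e.\ $n\equiv 4\pmod 6$.
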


By combining Theorem~\ref{main 1} with Proposition~\ref{mutual relation} we immediately obtain the following.

\begin{corollary}\label{cor m1}
With $\KK$ as in Theorem~\ref{main 1}, let $n = 9$. Then $R^\uparrow(H) \neq \emptyset$ for every hyperplane $H$ of $\cG_3(V)$.
\end{corollary}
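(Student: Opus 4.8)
The plan is to deduce Corollary~\ref{cor m1} from Theorem~\ref{main 1} via Proposition~\ref{mutual relation}, which is precisely the mutual-relation mechanism between problems (1.1) and (1.2). So fix $n = 9$ and $\KK$ as in Theorem~\ref{main 1}: a perfect field of cohomological dimension at most $1$ satisfying hypothesis $(*)$. Note that $n - k = 9 - 3 = 6$ is even, so Proposition~\ref{mutual relation} applies with $k = 3$: for a hyperplane $H$ of $\cG_3(V)$ we have $R^\uparrow(H) = \emptyset$ if and only if $R^\uparrow(H(W))$ is spread-like for every hyperplane $W$ of $V$.

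The key step is to rule out the right-hand side of this equivalence. Suppose, for contradiction, that $R^\uparrow(H) = \emptyset$ for some hyperplane $H$ of $\cG_3(V)$. Then $R^\uparrow(H(W))$ is spread-like for \emph{every} hyperplane $W$ of $V$. Pick any such $W$: it is an $8$-dimensional $\KK$-subspace, and $H(W) = H \cap G_3(W)$ is a hyperplane of $\cG_3(W)$, as recalled in the paragraph preceding Proposition~\ref{mutual relation} (the observation there that $G_3(W) \not\subseteq H$, which uses that $R_\downarrow(H)$ has codimension at least $k = 3$ in $V$, guarantees $H(W)$ is a proper subspace and hence genuinely a hyperplane). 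Now $W$ is an $8$-dimensional vector space over the same field $\KK$, which still satisfies all the hypotheses of Theorem~\ref{main 1} — being a perfect $\mathbf{Cd}\{0,1\}$ field with property $(*)$ is a property of $\KK$ alone, not of the ambient dimension. Since here $n_W := \dim W = 8$ and $k = 3$, so $n_W - k = 5$ is odd and the notion "spread-like" is the relevant one, Theorem~\ref{main 1} tells us that $R^\uparrow(H(W))$ is never a spread; and for $k = 3$, being spread-like is the same as being a spread of $\PG(W)$, as noted in item 2) of the dichotomy in Section~\ref{problems}. This contradicts spread-likeness of $R^\uparrow(H(W))$.

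Therefore the assumption $R^\uparrow(H) = \emptyset$ is untenable, and $R^\uparrow(H) \neq \emptyset$ for every hyperplane $H$ of $\cG_3(V)$ when $n = 9$, as claimed.

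I do not expect a serious obstacle here: the argument is a one-line logical consequence of two results already in hand, and the only points requiring a moment's care are bookkeeping ones — checking that $\KK$'s standing hypotheses transfer verbatim from $V$ to its hyperplane $W$ (they do, being field-theoretic), that $n - k$ has the correct parity in each of the two invocations ($6$ even for Proposition~\ref{mutual relation}, $5$ odd for Theorem~\ref{main 1}), and that "spread-like" coincides with "spread" in the $k = 3$ case so that the conclusion of Theorem~\ref{main 1} negates exactly the right-hand condition of Proposition~\ref{mutual relation}. Once these alignments are verified, the proof is complete.
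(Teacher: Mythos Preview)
Your proof is correct and follows exactly the approach indicated in the paper, which simply states that the corollary is obtained by combining Theorem~\ref{main 1} with Proposition~\ref{mutual relation}. The bookkeeping checks you spell out (parity of $n-k$, that the hypotheses on $\KK$ are field-theoretic and hence pass to $W$, and that spread-like coincides with being a line-spread when $k=3$) are precisely the points one must verify, and you have done so.
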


By Theorem~\ref{main 2} and Proposition~\ref{mutual relation} we  obtain that, if $\KK$ is a finite field and $n\equiv 4\pmod6$ then
$R^\uparrow(H)\neq \emptyset$ for every hyperplane $H$ of $\cG_3(V)$. However this conclusion as well as the conclusion of Corollary~\ref{cor m1} when $\KK$ is quasi-algebraically closed, are contained in the following theorem of Draisma and Shaw \cite{DS10}:

\begin{theorem}\label{quasi alg. closed}
Let $n$ be odd and assume that $\KK$ is quasi algebraically closed. Then $R^\uparrow(H) \neq \emptyset$ for every hyperplane $H$ of $\cG_3(V)$.
\end{theorem}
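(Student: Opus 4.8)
\textbf{Proof proposal for Theorem~\ref{quasi alg. closed}.}

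The plan is to reduce the problem to a single homogeneous polynomial equation over $\KK$ and then invoke the defining property of quasi-algebraically closed fields. Fix a hyperplane $H$ of $\cG_3(V)$ with $n$ odd, and let $\alpha$ be a non-trivial alternating $3$-linear form on $V$ defining $H$. Since $n-3$ is odd, Proposition~\ref{polar space bis} tells us that $R_X(H)\neq\emptyset$ for every $(k-2)$-subspace (here: every point) $X$ of $V$; what we must show is that for \emph{some} line $A=\langle a_1,a_2\rangle$ of $\PG(V)$ the form $\alpha(a_1,a_2,\cdot)$ vanishes identically, equivalently that $A\in R^\uparrow(H)$. First I would reformulate this condition. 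For a vector $v\in V$, the map $\beta_v(x,y):=\alpha(v,x,y)$ is an alternating bilinear form on $V$; writing things in a fixed basis, its Gram matrix $M(v)$ has entries that are linear in the coordinates of $v$. A line $A$ lies in $R^\uparrow(H)$ precisely when there is a $2$-dimensional subspace on which \emph{all} of $\beta_{a}$, $a\in A$, vanish --- and by multilinearity it suffices to have two independent vectors $a_1,a_2$ with $\beta_{a_1}(a_1,a_2)=0$ (automatic by alternation) and, more substantively, with $a_2\in\Rad(\beta_{a_1})$, i.e. $a_2$ in the kernel of $M(a_1)$ and symmetrically $a_1\in\ker M(a_2)$.

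The key step is to exploit the parity of $n$: for each $v\neq 0$ the matrix $M(v)$ is an $n\times n$ alternating matrix with $n$ odd, hence \emph{singular}, so $\Rad(\beta_v)\neq 0$ for every $v$. This gives a well-defined ``radical'' correspondence $v\mapsto \Rad(\beta_v)$, generically of dimension $n-\rk M(v)\geq 1$, and I want to produce a vector $v$ for which this radical contains a vector $w$ with $v\in\Rad(\beta_w)$ as well --- then $\langle v,w\rangle\in R^\uparrow(H)$. The natural device is a Pfaffian-type construction: deleting the $i$-th row and column of $M(v)$ yields an $(n-1)\times(n-1)$ alternating matrix, whose Pfaffian $\mathrm{Pf}_i(v)$ is a homogeneous polynomial of degree $(n-1)/2$ in the coordinates of $v$; the vector $\mathrm{Pf}(v)$ with components $(\pm\mathrm{Pf}_i(v))_i$ always lies in $\ker M(v)=\Rad(\beta_v)$ (this is the standard ``adjugate of an odd alternating matrix'' identity). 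So set $P:V\to V$, $P(v):=\mathrm{Pf}(v)$; this is a homogeneous polynomial map of degree $d=(n-1)/2$, and by the preceding remark $\alpha(v,P(v),x)=0$ for all $x\in V$, i.e. $\langle v,P(v)\rangle\in R^\uparrow(H)$ \emph{whenever} $v$ and $P(v)$ are linearly independent.

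It remains to handle the degenerate locus where $P(v)\in\langle v\rangle$ (including $P(v)=0$). If $P(v)=0$ for some $v\neq 0$ and $\rk M(v)=0$, then $v\in\Rad(\alpha)=R_\downarrow(H)$ and every line through $v$ lies in $R^\uparrow(H)$, so we are done; if $P(v)=0$ but $M(v)\neq 0$, pick any $w\in\Rad(\beta_v)\setminus\langle v\rangle$ directly (such $w$ exists since $\dim\Rad(\beta_v)\geq 3$ by parity once $\rk M(v)\le n-3$, and the remaining case $\rk M(v)=n-1$ is excluded by $P(v)=0$). The genuinely new input is the case $P(v)=\lambda(v)\,v$ with $\lambda(v)\neq 0$ for \emph{every} $v\neq 0$: here I would argue that $\lambda:V\setminus\{0\}\to\KK$ is forced to be a homogeneous polynomial of degree $d-1$ (divide out one coordinate), so the $n$ equations $\mathrm{Pf}_i(v)=\lambda(v)v_i$ cut out a condition on $v$; eliminating $\lambda$ gives homogeneous equations of degree $\le d$ in $n$ unknowns, and since $d=(n-1)/2<n$ for $n\geq 2$, quasi-algebraic closure yields a nonzero solution --- but in fact the cleanest route is: consider the polynomial map $v\mapsto P(v)$ composed with projection to $\PG(V)$; a quasi-algebraically closed field has the property (equivalent to $C_1$) that a self-map of $\PG^{n-1}$ of degree $d<n$ has a fixed point only in the trivial sense, and more usefully, the hypersurface $\{\det(\text{something})=0\}$ forcing $v\parallel P(v)$ for all $v$ cannot be all of $\PG(V)$ unless $P$ is a scalar map $v\mapsto \mu v^{?}$, which is impossible for an alternating-form construction with $H$ proper. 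I expect \textbf{this last case --- ruling out or handling $P(v)\in\langle v\rangle$ identically --- to be the main obstacle}: one must show that if the Pfaffian vector always points back along $v$, then one can still extract two independent vectors spanning a line in $R^\uparrow(H)$, and this is exactly where the field hypothesis (solvability of a degree-$<n$ homogeneous equation in $n$ variables) enters, via a suitable auxiliary form built from $M(v)$ restricted to a complement of $\langle v\rangle$. Once that is settled, the non-emptiness of $R^\uparrow(H)$ follows for all odd $n$, and the general $k$ with $n-k$ odd is inherited by the reduction remarks preceding Problem~\ref{Prob2}.
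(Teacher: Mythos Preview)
Your Pfaffian-vector strategy is exactly the Draisma--Shaw approach the paper's note is alluding to, but there is a basic oversight that makes the case analysis collapse. Since $\alpha$ is alternating, $\beta_v(x,v)=\alpha(v,x,v)=0$ for every $x$, so $v\in\Rad(\beta_v)=\ker M(v)$ \emph{always}. Hence whenever $\rk M(v)=n-1$ the kernel is exactly $\langle v\rangle$, and your Pfaffian vector $P(v)$, which also lies in $\ker M(v)$, is automatically a scalar multiple of $v$; when $\rk M(v)\le n-3$ one has $P(v)=0$. In either case $v\wedge P(v)=0$. The ``generic'' situation you were counting on, with $v$ and $P(v)$ linearly independent, \emph{never occurs}: what you labelled case~5 is the only case, and you do not resolve it.

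The repair is short once this is seen. The relation $M(v)v=0$ is a polynomial identity in the coordinates of $v$, not merely a pointwise fact, so $v_iP_j(v)=v_jP_i(v)$ in $\KK[v_1,\dots,v_n]$; by unique factorisation $v_i\mid P_i(v)$ and $\lambda(v):=P_i(v)/v_i$ is a well-defined homogeneous polynomial of degree $(n-1)/2-1=(n-3)/2$, independent of $i$, with $P(v)=\lambda(v)\,v$. Now $R^\uparrow(H)=\emptyset$ means $\rk M(v)=n-1$ for every nonzero $v$, equivalently $\lambda(v)\neq 0$ for all $v\neq 0$. Since $\KK$ is quasi-algebraically closed and $(n-3)/2<n$, this is impossible for $n\ge 5$ (and $n=3$ is excluded by $k<n$). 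At a zero $v_0$ of $\lambda$ your case-4 reasoning applies verbatim: $\dim\ker M(v_0)\ge 3$, pick $w\in\ker M(v_0)\setminus\langle v_0\rangle$, and $\langle v_0,w\rangle\in R^\uparrow(H)$. The single form $\lambda$ of degree $(n-3)/2$ is precisely what the paper's note means by ``degree $(n-1)/2-1$''.

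A minor slip at the outset: with $n$ odd and $k=3$ one has $n-k$ \emph{even}, so Proposition~\ref{polar space bis} does not guarantee $R_X(H)\neq\emptyset$ for every point $X$; indeed, the existence of smooth points is exactly what the theorem must rule out.
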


\begin{note}
 In their proof of Theorem~\ref{quasi alg. closed} Draisma and Shaw consider the image $\RR^\uparrow(H) := \varepsilon_2(R^\uparrow(H))$ of $R^\uparrow(H)$ via the Pl\"{u}cker embedding $\varepsilon_2$. The crucial step in their proof is to show that $\RR^\uparrow(H)$ is an algebraic variety of degree $(n-1)/2-1$ but different proofs can be given in special cases. For instance, when $\KK$ is algebraically closed the conclusion $R^\uparrow(H)\neq\emptyset$ follows from a celebrated results on linear subspaces disjoint from projective varieties (see Subsection~\ref{algebraic approach}, Note~\ref{alg. closed proof}).
\end{note}

\begin{note}
Relying on Gurevitch's classification of trivectors of an $8$-dimensional complex vector space \cite[\S 35]{Gu64}, Djokovi\'{c} \cite{Djok} has classified trivectors of an $8$-dimensional real vector space. Following Djokovi\'{c}'s classification, if $n = 8$ and $\KK = \RR$, then $R^\uparrow(H)$ cannot be a spread for any hyperplane $H$ of $\cG_3(V)$. Consequently,
in view of Proposition~\ref{mutual relation}, if $n = 9$ and $\KK = \RR$ then $R^\uparrow(H) \neq \emptyset$ for every hyperplane $H$ of $\cG_3(V)$. However, this conclusion is contained in a stronger result of Draisma and Shaw \cite[Theorem 2]{DS14}, where it is proved that, in contrast with the exceptional behavior of $\RR$ when $n = 7$ (see Note~\ref{Class7 note}), if $n$ is odd, $n\geq 9$ and $\KK = \RR$ then $R^\uparrow(H) \neq \emptyset$ for every hyperplane $H$ of $\cG_3(V)$.
\end{note}

\subsection{More definitions}

We shall now state a few more definitions, to be used later in this paper. Still assuming $k = 3$, let $H$ be a hyperplane of $\cG_3(V)$.

\subsubsection{The geometry of poles}\label{poles}

Recall that the rank $\rk(X)$ of a (possibly empty) projective space $X$ is the projective dimension of $X$ augmented by $1$. In particular, $\rk(\emptyset) = 0$. Given a point $p$ of $\PG(V)$, let $r(p)$ be the rank of the radical $R_p(H)$ of the polar space ${\cS}_p(H) = ((p)G_2, (p)H)$ (notation as in Section~\ref{problems}). We call $r(p)$ the \emph{degree} of $p$ (relative to $H$). If $r(p) = 0$ then we say that $p$ is \emph{smooth}, otherwise we call $p$ a \emph{pole} of $H$, also $H$-\emph{pole} for short. Clearly, a point is a pole if and only if it belongs to a line $\ell\in R^\uparrow(H)$ (compare Proposition~\ref{polar space bis}). So, $R^\uparrow(H) = \emptyset$ if and only if all points are smooth.

As the polar space ${\cS}_p(H)$ is symplectic, $r(p)$ is even if $n$ is odd and it is odd if $n$ is even. In particular, when $n$ is even  all the points are poles. In both cases, the poles of degree $r(p) = n-1$ are just the points of $R_\downarrow(H)$.

Let $P(H)$ be the set of $H$-poles. Then $P(H)$ is the union of the lines of $\PG(V)$ that belong to $R^\uparrow(H)$. We can form a subgeometry ${\cP}(H) := (P(H), R^\uparrow(H))$ of $\PG(V)$, by taking $P(H)$ as the set of points and $R^\uparrow(H)$ as the set of lines, a point $p\in P(H)$ and a line $\ell\in R^\uparrow(H)$ being incident in ${\cP}(H)$ precisely when $p\in \ell$ in $\PG(V)$. We call ${\cP}(H)$ the \emph{geometry of poles} of $H$. We shall often refer to it in Section~\ref{Sect3}.

\begin{note}
We have assumed $k = 3$ but all the above can be easily rephrased for any $k \geq 3$, modulo a few obvious changes: we should consider $(k-2)$-subspaces of $V$ instead of points of $\PG(V)$ when defining poles and take lines $\ell_{X,Y}$ of $\cG_{k-2}(H)$ with $Y\in R^\uparrow(H)$ as lines of the geometry of poles. However, we shall not insist on this generalization here.
\end{note}

\subsubsection{The depth of $H$ and the rank of $R^\uparrow(H)$}\label{depth}

We define the \emph{depth} $\delta(H)$ of $H$ as the maximum degree $r(p)$, for $p$ ranging in the set of points of $\PG(V)$. Clearly, $\delta(H) = 0$ if and only if $R^\uparrow(H) = \emptyset$ and $\delta(H) = n-1$ if and only if $R_\downarrow(H) \neq \emptyset$.

We have noted in Section~\ref{problems} that $R^\uparrow(H)$ is a subspace of $\cG_2(H)$. So, we can also regard it as an induced subgeometry of $\cG_2(V)$, the lines of $R^\uparrow(H)$ being the lines of $\cG_2(V)$ contained in it. In this way, we can also give $R^\uparrow(H)$ a rank, as follows. Recall that a subspace of a point-line geometry is called \emph{singular} if all of its points are mutually collinear (Shult \cite{shult11}). A point-line geometry is said to be \emph{paraprojective} if all of its singular subspaces are projective spaces (Shult \cite[chapter 12]{shult11}). The \emph{rank} of a paraprojective geometry is the maximal rank of its singular subspaces.

Grassmannians are paraprojective and subspaces of paraprojective geometries are still paraprojective. Hence $R^\uparrow(H)$ is paraprojective. Accordingly, it admits a rank, henceforth denoted by the symbol $\rk(R^\uparrow(H))$. For the sake of completeness, when $R^\uparrow(H) = \emptyset$ we put $\rk(R^\uparrow(H)) = 0$. It is not difficult to see that $\delta(H) = \rk(R^\uparrow(H))$, except possibly in the case $\delta(H) = 1$ and $\rk(R^\uparrow(H)) = 2$.

\section{Preliminary results}\label{basics}

\subsection{Notation}\label{Notation}

For the convenience of the reader, we recall some notation introduced in the previous section. Let $X$ be a subspace of $V$ of dimension $\dim(X) = i \neq k$. When $i > k$ (possibly $i =n$) we denote by $G_k(X)$  and $\cG_k(X)$ the set of $k$-spaces contained in $X$ and the $k$-Grassmannian of $X$, respectively. Given a hyperplane $H$ of $\cG_k(V)$, we set $H(X) := G_k(X)\cap H$.

Suppose $i < k$. Then $(X)G_k$ is the set of $k$-spaces containing $X$ and $(X)\cG_k$ ($\cong \cG_{k-i}(V/X)$) is the subgeometry induced by $\cG_k(V)$ on $(X)G_k$. Also,  $(X)H := H \cap (X)G_k$. When $i = k-2$ (as defined in Section~\ref{problems}) the symbol ${\cS}_X(H)$ stands for the polar space with $(X)G_{k-1}$ as the set of points and $(X)H$ as the set of lines. $R_X(H)$ is the radical of ${\cS}_X(H)$.

We shall use square brackets in order to distinguish between a subspace of a vector space and the corresponding  projective subspace  in the projective geometry of that vector space. Thus, if $X$ is a subspace of some vector space then $[X]:=\{\langle x\rangle\colon x\in \langle X\rangle\setminus \{0\}\}$. In particular, if $v$ is a non-zero vector then $[v] = \langle v\rangle$ is the projective point represented by $v$. As usual, we write $[v_1,\ldots, v_k]$ instead of $[\langle v_1,\ldots, v_k\rangle]$, for short.

\subsection{Properties of radicals}\label{radicals}

In Section~\ref{problems} we have stated several results on radicals, referring the reader to the present section for the proofs of the less obvious among them. We shall now give those proofs.

Let $H$ be a hyperplane of $\cG_k(V)$ and $i < k$. The $i$-radical $R_i(H)$ of $H$, as defined in Section~\ref{problems}, is the set of $i$-subspaces $X\subset V$ such that $(X)H = (X)G_k$. The following is one of the claims made in Section~\ref{problems}.

\begin{prop}\label{V2(H) subspace}
The set $R_i(H)$ is a subspace of $\cG_i(V).$
\end{prop}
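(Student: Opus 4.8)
The goal is to show that $R_i(H)$ is a subspace of $\cG_i(V)$, i.e.\ that whenever a line $\ell_{Y,Z}$ of $\cG_i(V)$ (with $\dim Y = i-1$, $\dim Z = i+1$, $Y\subset Z$) meets $R_i(H)$ in two distinct points, the whole line $\ell_{Y,Z}$ lies in $R_i(H)$. The plan is to work directly with the defining alternating $k$-linear form. Fix a non-trivial alternating $k$-linear form $\alpha$ with $H = H_\alpha$, so that an $i$-subspace $X = \langle x_1,\dots,x_i\rangle$ lies in $R_i(H)$ precisely when $\alpha(x_1,\dots,x_i,v_{i+1},\dots,v_k) = 0$ for all $v_{i+1},\dots,v_k\in V$; call this the condition ``$\alpha$ vanishes on $X$'', which is independent of the chosen basis of $X$ by multilinearity and alternation.

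First I would set up coordinates adapted to the line: choose a basis $e_1,\dots,e_{i-1}$ of $Y$ and extend to a basis $e_1,\dots,e_{i-1},f,g$ of $Z$, so that every point of $\ell_{Y,Z}$ has the form $X_{[\lambda:\mu]} := \langle e_1,\dots,e_{i-1},\,\lambda f + \mu g\rangle$ for $[\lambda:\mu]\in\PG(1,\KK)$, and distinct points correspond to distinct $[\lambda:\mu]$. Suppose $X_{[\lambda_1:\mu_1]}$ and $X_{[\lambda_2:\mu_2]}$ both lie in $R_i(H)$ with $[\lambda_1:\mu_1]\neq[\lambda_2:\mu_2]$. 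Then for all $v_{i+1},\dots,v_k\in V$,
\[
\alpha(e_1,\dots,e_{i-1},\lambda_j f + \mu_j g, v_{i+1},\dots,v_k) = 0 \qquad (j=1,2).
\]
Expanding by linearity in the $i$-th slot, and writing $A := \alpha(e_1,\dots,e_{i-1}, f, v_{i+1},\dots,v_k)$ and $B := \alpha(e_1,\dots,e_{i-1}, g, v_{i+1},\dots,v_k)$, this says $\lambda_j A + \mu_j B = 0$ for $j = 1,2$. Since the vectors $(\lambda_1,\mu_1)$ and $(\lambda_2,\mu_2)$ are linearly independent in $\KK^2$, the only solution is $A = B = 0$. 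Hence $\alpha(e_1,\dots,e_{i-1}, \lambda f + \mu g, v_{i+1},\dots,v_k) = \lambda A + \mu B = 0$ for every $[\lambda:\mu]$ and every $v_{i+1},\dots,v_k$, which is exactly the statement that $\alpha$ vanishes on $X_{[\lambda:\mu]}$, i.e.\ $X_{[\lambda:\mu]}\in R_i(H)$ for every point of the line. Therefore $\ell_{Y,Z}\subseteq R_i(H)$, proving that $R_i(H)$ is a subspace.

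There is no real obstacle here; the only point requiring a little care is the bookkeeping that the vanishing condition is genuinely basis-independent, so that ``$X\in R_i(H)$'' is well defined and one is free to pick the adapted basis $e_1,\dots,e_{i-1},f,g$ above — this is immediate from the alternating multilinearity of $\alpha$, since a change of basis of $X$ multiplies the relevant value by the determinant of the transition matrix and permutes/combines slots. One should also note the degenerate remark that $R_i(H)$ could a priori be empty, but the empty set is vacuously a subspace, so nothing extra is needed. (Alternatively, one could phrase the whole argument on the functional side, using $f$ on $\bigwedge^kV$ and the fact that $X\in R_i(H)$ iff $\langle x_1\wedge\cdots\wedge x_i\rangle\wedge\bigwedge^{k-i}V \subseteq \ker f$, but the multilinear-form version is the cleanest.)
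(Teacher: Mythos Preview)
Your proof is correct. You use the alternating $k$-linear form $\alpha$ defining $H$ (whose existence the paper has already secured via Shult's theorem), parametrize the line $\ell_{Y,Z}$ linearly, and reduce the question to a $2\times 2$ linear system in each fixed tuple $(v_{i+1},\dots,v_k)$. The bookkeeping about basis-independence is handled correctly.

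The paper's own proof, however, is purely synthetic and does not touch $\alpha$ at all: given $X_1,X_2\in R_i(H)$ on $\ell_{Y,Z}$ and an arbitrary $X\in\ell_{Y,Z}$, it takes any $U\in (X)G_k$ and shows $U\in H$ by a case split. If $U\supseteq Z$ then $U$ already contains $X_1$ and $X_2$, hence $U\in H$. If $U\not\supseteq Z$, one picks a complement $W$ of $X$ in $U$, forms $N=W+Y$ and $M=W+Z$, and checks that $U_1=X_1+W$ and $U_2=X_2+W$ are two distinct points of the $\cG_k(V)$-line $\ell_{N,M}$ lying in $H$; the hyperplane property then forces $\ell_{N,M}\subseteq H$, in particular $U\in H$.

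The trade-off is clear: your argument is shorter and more transparent once the form $\alpha$ is in hand, but it depends on Shult's theorem. The paper's argument uses only the incidence-geometric definition of a hyperplane, so it would go through verbatim for non-embeddable Grassmannians (over skew fields), a setting the paper explicitly flags in a Note. In the commutative context of this paper both arguments are perfectly adequate.
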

\begin{proof}
Let $X_1$ and $X_2$ be distinct elements of $R_i(H)$ belonging to the same line $\ell_{Y,Z}$ of $\cG_i(H)$. For $X\in l_{Y,Z}$, let $U\in (X)G_k$.
We need only to prove that $U\in H$.  If $U\supseteq Z$ then $U\in (X_1)G_k\cap (X_2)G_k$. Hence $U\in H$, as $X_1, X_2\in R_i(H)$.

Suppose that $U\not\supseteq Z$. Then $U\cap Z = X$. Let $W$ be a complement of $X$ in $U$. Then $W\cap Z = 0$. Hence $N := W+Y$ and $M := W+Z$ have dimension $k-1$ and $k+1$ respectively. As clearly $N \subset M$, we can consider the line $\ell_{N,M}$ of $\cG_k(V)$. The spaces $U_1 := X_1+W$ and $U_2 := X_2+W$ are points of $\ell_{N,M}$. As $X_1, X_2\in R_i(H)$, both $U_1$ and $U_2$ belong to $H$. We shall now prove that $U_1 \neq U_2$. By way of contradiction, let $U_1 = U_2 =: U'$, say. As both $U'$ and $Z$ contain $X_1$ and $X_2$, we have $Z\subset U'$. However $U'$ also contains $W$, by construction. As $\dim(U') = k$, $\dim(W) = k-i$ and $\dim(Z) = i+1$, we obtain that $Z\cap W \neq 0$, while we have previously proved that $Z\cap W = 0$. This contradiction forces $U_1\neq U_2$. Thus $\ell_{N,M}$ contains two distinct members of $H$, namely $U_1$ and $U_2$. Hence $\ell_{N,M}\subseteq H$, as $H$ is a hyperplane. In particular, $U \in H$.
\end{proof}

The following has also been mentioned in Section~\ref{problems}.
\begin{prop}\label{codimension lower radical}
The lower radical $R_\downarrow(H)$ of $H$ has codimension at least $k$ in $\PG(V)$.
\end{prop}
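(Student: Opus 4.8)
The claim is that $R_\downarrow(H)$, the radical of the alternating $k$-linear form $\alpha$ defining $H$, has codimension at least $k$ in $\PG(V)$; equivalently, $\dim \Rad(\alpha) \le n-k$.

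The plan is to argue by contradiction. Suppose $\dim \Rad(\alpha) \ge n-k+1$, i.e.\ $\cod(\Rad(\alpha)) \le k-1$. Pick a complement $U$ of $\Rad(\alpha)$ in $V$, so $\dim U \le k-1$. The key structural fact is that $\alpha$ factors through the quotient $V/\Rad(\alpha)$: if any one of the arguments $x_1, \dots, x_k$ lies in $\Rad(\alpha)$, then $\alpha(x_1, \dots, x_k) = 0$ by definition of the radical (using alternation to move that argument into the last slot). Hence $\alpha(x_1, \dots, x_k)$ depends only on the images of the $x_i$ in $V/\Rad(\alpha)$, a space of dimension at most $k-1$. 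But an alternating $k$-linear form on a vector space of dimension strictly less than $k$ is identically zero, since any $k$ vectors there are linearly dependent and an alternating form vanishes on linearly dependent tuples. Therefore $\alpha \equiv 0$, contradicting the fact that $H$ is a proper hyperplane (which corresponds to a non-trivial form).

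To be careful about the one edge case: I should note that $\Rad(\alpha) \ne V$, since $\alpha$ is non-trivial, so $\cod(\Rad(\alpha)) \ge 1$ automatically; the substance is the lower bound $\cod \ge k$. The two facts I am invoking — that an alternating multilinear form vanishes on any tuple spanning a space of dimension less than its arity, and that such a form descends to the quotient by its radical — are both completely standard and require only the defining properties of alternating forms together with multilinearity, so no calculation is really needed.

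I do not anticipate a genuine obstacle here; the only thing to get right is the bookkeeping with dimensions (codimension at least $k$ versus dimension of the radical at most $n-k$) and a clean statement of why an alternating $k$-form on a space of dimension $< k$ is zero. If one prefers to avoid the quotient language, an equivalent phrasing: choose vectors $u_1, \dots, u_m$ with $m = \dim U \le k-1$ spanning a complement $U$; any $k$-subspace of $V$ has a basis we may take from $\Rad(\alpha) \cup \{u_1,\dots,u_m\}$ after a change of basis, and since $m < k$ at least one basis vector lies in $\Rad(\alpha)$, forcing $\alpha$ to vanish on that $k$-subspace — so $H$ would be all of $\cG_k(V)$, not a proper subspace, a contradiction.
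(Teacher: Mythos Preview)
Your proof is correct and is essentially the paper's argument. The paper's one-line proof is exactly your alternative phrasing at the end: if $\cod(R_\downarrow(H)) < k$ then every $k$-subspace of $V$ meets $R_\downarrow(H)$ non-trivially, so $H$ would be all of $\cG_k(V)$, contradicting that $H$ is proper; your main argument via descent to the quotient $V/\Rad(\alpha)$ is just the algebraic rephrasing of the same fact.
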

\begin{proof}
If the codimension of $R_{\downarrow}(H)$ is smaller than $k$, then every $k$-subspace of $V$ meets $R_\downarrow(H)$ non-trivially, thus forcing $H$ to be the full point-set of $\cG_k(V)$, while $H$ is, by definition, a proper subspace of $\cG_k(V)$.
\end{proof}

Proposition~\ref{mutual relation} of Section~\ref{problems} remains to be proved. We recall its statement here, for the convenience of the reader.
\begin{prop}\label{mutual relation bis}
Let $H$ be a hyperplane of $\cG_k(V)$ with $3 \leq k < n-1$. Assume that $n-k$ is even. Then $R^\uparrow(H) = \emptyset$ if and only if $R^\uparrow(H(W))$ is spread-like for every hyperplane $W$ of $V$.
\end{prop}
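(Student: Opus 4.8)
The plan is to exploit Proposition~\ref{polar space bis}, which describes $R^\uparrow(H)$ fibrewise over $\cG_{k-2}(V)$: for each $(k-2)$-subspace $X$, the fibre $R^\uparrow(H)\cap (X)G_{k-1}$ is the radical $R_X(H)$ of the symplectic polar space ${\cS}_X(H)$ on $(X)G_{k-1}\cong \PG(V/X)$, and this radical has even codimension in $\PG(V/X)$. Since $n-k$ is even, $\dim(V/X)=n-k+2$ is even, so $R_X(H)$ has odd rank; in particular the two extreme cases are $R_X(H)=\emptyset$ (rank $0$) and $R_X(H)$ a single point (rank $1$). The statement $R^\uparrow(H)=\emptyset$ says $R_X(H)=\emptyset$ for \emph{every} $X\in G_{k-2}(V)$, while $R^\uparrow(H(W))$ spread-like says that for every $(k-2)$-subspace $X\subset W$ the radical of the corresponding polar space in $W$ is exactly one point.

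First I would fix a hyperplane $W$ of $V$ and a $(k-2)$-subspace $X\subset W$ and compare the two polar spaces ${\cS}_X(H)$ (living on $(X)G_{k-1}$, i.e.\ on $\PG(V/X)$, which has even dimension $n-k+1$ in projective terms, odd vector dimension $n-k+2$) and ${\cS}_X(H(W))$ (living on $(X)G_{k-1}(W)$, i.e.\ on $\PG(W/X)$, a hyperplane of $\PG(V/X)$). Both are symplectic, defined by the alternating bilinear form on $V/X$ obtained by plugging a basis of $X$ into the defining $k$-linear form $\alpha$ of $H$; call this form $\beta_X$ on $V/X$, and note $\beta_X$ restricted to $W/X$ is the form defining ${\cS}_X(H(W))$. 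The key elementary fact about alternating bilinear forms is: if $\beta$ is an alternating form on an odd-dimensional space $U$ (here $U=V/X$, $\dim U=n-k+2$ even — wait, I must be careful with parity), then $\Rad(\beta)$ is nontrivial, and the radical of the restriction of $\beta$ to a hyperplane $U_0$ of $U$ is controlled by whether $\Rad(\beta)\subseteq U_0$ or not. Concretely: $\dim\Rad(\beta|_{U_0})\in\{\dim\Rad(\beta)-1,\dim\Rad(\beta)+1\}$, and which one occurs depends on the hyperplane. The heart of the argument is the following linear-algebra lemma, which I would state and prove: \emph{for an alternating bilinear form $\beta$ on an even-dimensional space $U$, the radical $\Rad(\beta)=0$ (i.e.\ $\beta$ nondegenerate) if and only if for every hyperplane $U_0$ of $U$ the restriction $\beta|_{U_0}$ has a one-dimensional radical; and $\Rad(\beta)\ne 0$ if and only if some hyperplane $U_0$ has $\dim\Rad(\beta|_{U_0})\ge 2$ (equivalently $=0$ only when $\dim\Rad(\beta)=0$, etc.)}. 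This is standard: if $\beta$ is nondegenerate then for any $U_0=\ker\lambda$ the radical of $\beta|_{U_0}$ is spanned by the unique (up to scalar) vector $u$ with $\beta(u,\cdot)=\lambda$, which lies in $U_0$ since $\lambda(u)=\beta(u,u)=0$; conversely if $r:=\dim\Rad(\beta)\ge 2$ then choosing $U_0\supseteq\Rad(\beta)$ gives $\Rad(\beta|_{U_0})\supseteq\Rad(\beta)$ of dimension $\ge 2$.

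Now I would assemble the two directions. $(\Rightarrow)$ Suppose $R^\uparrow(H)=\emptyset$, i.e.\ $\beta_X$ is nondegenerate on $V/X$ for every $(k-2)$-subspace $X$. Fix a hyperplane $W$ of $V$ and a $(k-2)$-subspace $X\subset W$; then $W/X$ is a hyperplane of $V/X$, and by the lemma $\beta_X|_{W/X}$ has a one-dimensional radical, i.e.\ $R_X(H(W))$ is a single point. Letting $X$ range over all $(k-2)$-subspaces of $W$, we get that $R^\uparrow(H(W))$ is spread-like (every $(k-2)$-subspace of $W$ lies in exactly one member of $R^\uparrow(H(W))$ — I'd spell out that ``spread-like'' for $\cG_k(W)$, with $\dim W - k = n-1-k$ odd, is exactly the condition that each $R_X(H(W))$ is a singleton, via the analogue of Proposition~\ref{polar space bis} applied inside $W$). $(\Leftarrow)$ Suppose $R^\uparrow(H)\ne\emptyset$; pick $X$ with $R_X(H)\ne\emptyset$, so $\beta_X$ is degenerate, $\dim\Rad(\beta_X)=r\ge 2$ (even, since $\dim V/X$ is even). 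Choose a hyperplane $U_0$ of $V/X$ containing $\Rad(\beta_X)$; it has the form $W/X$ for a hyperplane $W$ of $V$ containing $X$ (this needs $X\subset W$, which holds by construction since $U_0\supset \Rad(\beta_X)$ — and I should check $\dim W = n-1$, clear). Then $\Rad(\beta_X|_{W/X})\supseteq\Rad(\beta_X)$ has dimension $\ge 2$, so $R_X(H(W))$ contains a line, hence $R^\uparrow(H(W))$ is not spread-like. Contrapositively, if $R^\uparrow(H(W))$ is spread-like for every $W$, then $R^\uparrow(H)=\emptyset$.

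The main obstacle I anticipate is bookkeeping rather than depth: making sure the passage between ``$(k-2)$-subspaces of $W$'' and ``$(k-2)$-subspaces of $V$ contained in $W$'' is handled cleanly, that the form $\beta_X$ defining ${\cS}_X(H)$ genuinely restricts to the one defining ${\cS}_X(H(W))$ (this follows directly from the definition $\alpha_f(v_1,\dots,v_k)=f(v_1\wedge\cdots\wedge v_k)$ and the fact that $H(W)$ is defined by the restriction of $\alpha$ to $W$), and that every hyperplane $U_0$ of $V/X$ containing $\Rad(\beta_X)$ really arises as $W/X$ for a hyperplane $W\supset X$ of $V$ — which it does, by pulling back $U_0$ along $V\to V/X$. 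One should also record, once and for all, the parity translation: $n-k$ even $\iff$ $\dim(V/X)=n-k+2$ even for $X\in G_{k-2}(V)$ $\iff$ $\beta_X$ can be nondegenerate; and $\dim(W/X)=n-k+1$ odd, so $\beta_X|_{W/X}$ always has radical of odd dimension $\ge 1$, whence ``spread-like'' is the minimal (generic) case. With these translations in place the linear-algebra lemma does all the real work, and the proof is short.
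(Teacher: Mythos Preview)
Your proposal is correct and is essentially the same argument as the paper's, just phrased in the language of alternating bilinear forms $\beta_X$ on $V/X$ rather than in polar-space language. The paper's ``only if'' step (a line in $R_X(H(W))$ contains a point of $R_X(H)$) is exactly the contrapositive of your lemma ($\beta_X$ nondegenerate $\Rightarrow$ every hyperplane restriction has $1$-dimensional radical), and the ``if'' steps are identical: pick $X$ with $R_X(H)\neq\emptyset$, note its rank is at least $2$, and choose a hyperplane $W$ with $W/X$ containing it.
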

\begin{proof} Let $X\in G_{k-2}(W)$ be such that $R_X(H(W))$ contains a line $\ell$ of ${\cS}_X(H(W))$. (Actually, if it contains a line then it must contain also a plane, but we
shall not make use of this fact here). The polar space ${\cS}_X(H(W))$ is the subgeometry of ${\cS}_X(H)$ induced on the hyperplane $W/X$ of $V/X$. Therefore the line $\ell \subset {\cS}_X(H(W))$ contains at least one point collinear in ${\cS}_X(H)$ with all points of ${\cS}_X(H)$, namely a point of $R_X(H)$. Hence $R_X(H) \neq \emptyset$. By Proposition~\ref{polar space bis}, $R^\uparrow(H) \neq \emptyset$. The `only if' part of the statement is proved.

Turning to the `if' part, assume $R^\uparrow(H) \neq \emptyset$. Then $R_X(H) \neq \emptyset$ for some $X\in G_{k-2}(V)$. However, the projective space $R_X(H)$ has even rank, since $n-k$ is even. Therefore $R_X(H)$ contains a line $\ell$ of $\PG(V/X)$. Let $W$ be a hyperplane of $V$ containing $X$ and such that $[W/X] \supseteq \ell$. Clearly, $R_X(H(W)) \supseteq \ell$. Hence $R^\uparrow(H(W))$ is not spread-like.
\end{proof}

\subsection{An algebraic description of the upper radical $R^\uparrow(H)$}\label{algebraic approach}

As above, let $3\leq k < n = \dim(V)$. Given a linear functional $f:\bigwedge^kV\to\KK$, let $\tilde{f}$ be the linear mapping from $\bigwedge^{k-1} V$ to the dual $V^*$ of $V$ defined as follows:
\begin{equation}\label{tildef}
\begin{array}{rcl}
\tilde{f} ~~ : ~  v_1\wedge\cdots\wedge v_{k-1} \in \wedge^{k-1}V & \mapsto &  \tilde{f}(v_1\wedge\cdots\wedge v_{k-1}) \in V^* \\
\tilde{f}(v_1\wedge\cdots\wedge v_{k-1}) ~~ : ~  x\in V & \mapsto & f(v_1\wedge\cdots\wedge v_{k-1}\wedge x).
\end{array}
\end{equation}
Clearly,
\begin{equation}\label{sigma}
 \dim(\ker(\tilde{f}))~\geq~\dim \bigwedge^{k-1}V-\dim V^*={n\choose {k-1}}-n.
\end{equation}
Note that above $f$ is not assumed to be non-null. Clearly, if $f$ is the null functional then $\tilde{f}$ is null as well.

As recalled in Section~\ref{introduction}, for every hyperplane $H$ of $\cG_k(V)$ there exists a non-null linear functional
$f_H:\bigwedge^kV\to\KK$ such that $H=\varepsilon_{k}^{-1}(\ker(f_H)\cap\GG_k(V))$. With $\tilde{f}_H:\bigwedge^{k-1}V\rightarrow V^*$ defined as in \eqref{tildef}, put $K(H) :=\ker(\tilde{f}_H).$ We call $K(H)$ the \emph{kernel} of $H$.

The kernel $K(H)$ is strictly related to the upper radical $R^\uparrow(H)$ of $H$, as we shall presently see. Indeed,
a point $[v_1\wedge v_2\wedge\cdots \wedge v_{k-1}] \in \GG_{k-1}(V)$ belongs to $[K(H)]$ if and only if $\tilde{f}_H(v_1\wedge v_2\wedge\cdots\wedge v_{k-1})$ is the null linear functional. This is equivalent to say that $\langle v_1,\ldots, v_{k-1},x\rangle \in H$ for any $x\in V\setminus \langle v_1,\ldots, v_{k-1}\rangle$. Thus,
\begin{equation}\label{aG}
[K(H)]\cap \GG_{k-1}(V) ~=~ \RR^\uparrow(H) ~ (= \varepsilon_k(R^\uparrow(H))).
\end{equation}
This equation also proves that $\RR^\uparrow(H)$ is always an algebraic variety.

Take $X\in G_{k-2}(V)$ and let $\xi$ be a vector representative of the point $\varepsilon_{k-2}(X)\in \PG(\bigwedge^{k-2} V)$.
 Up to nonzero scalar multiples, the following alternating bilinear form $f_{H,X}$ defines $\cS_X(H)$ as a polar space embedded in $\PG(V/X)$:
\begin{equation}
\label{eqB}
 f_{H,X}:\begin{cases}
  V/X\times V/X\to\KK, \\
  (X+u,X+v)\to f_H(\xi\wedge u\wedge v).
  \end{cases}
\end{equation}
Henceforth, when writing $f_{H,X}(\langle X,u\rangle, \langle X, v\rangle)=0$ we shall mean that $f_{H,X}(X+u,X+ v)=0$.

Given a subspace $W$ of $V$ of dimension $\dim(W) > k$, let $f_{H|W}$ be the restriction of $f_H$ to $\bigwedge^kW$. Recall that $H(W) = H\cap G_k(W)$ is either a hyperplane of $\cG_k(W)$ or the whole set $G_k(W)$. In the first case the hyperplane $H(W)$ is defined by $f_{H|W}$ and, with $\tilde{f}_{H|W}$ defined as in \eqref{tildef}, the kernel of $\tilde{f}_{H|W}$ is the kernel $K(H(W))$ of $H(W)$. In the second case,  $H(W) = G_k(W)$ and
$f_{H|W}$, as well as $\tilde{f}_{H|W}$, are null; consequently, $K(H(W)) = \bigwedge^{k-1}W$.

In any case, $K(H)\cap \bigwedge^{k-1}W \subseteq K(H(W))$. Note that in general this inclusion is proper.

\begin{note}\label{alg. closed proof}
When $\KK$ is algebraically closed, every subspace of $\PG(\bigwedge^{k-1} V)$ of codimension at most $(k-1)(n-k+1)$ meets $\GG_{k-1}(V)$
non-trivially, see \cite[Proposition 11.4]{harris}. As $\cod([K(H)]) \leq n$ by \eqref{sigma}, we have $[K(H)]\cap\GG_{k-1}\neq\emptyset$. Hence $R^{\uparrow}(H)\neq\emptyset$, as claimed in Theorem~\ref{quasi alg. closed}.

We warn that the above argument does not work for arbitrary fields. For instance, if $\KK$ is finite,
 then $\PG(\bigwedge^{k-1}V)$ admits subspaces of codimension $n$ disjoint from $\GG_{k-1}(V)$, see~\cite{co}.
\end{note}

\subsection{A result for $k = 3$}\label{subsec 3}

This subsection is devoted to the proof of a statement (see below, Proposition~\ref{cxx}) which will be exploited several times in Section~\ref{Sect3}.

Assuming $k = 3$, let $H$ be a hyperplane of $\cG_3(V)$. As in the previous subsection, $f_H \in (\bigwedge^3V)^*$ defines $H$, the linear mapping $\tilde{f}_H:\bigwedge^2V\rightarrow V^*$ is given as in \eqref{tildef} and $K(H) = \ker(\tilde{f}_H)$ is the kernel of $H$. For every subspace $W$ of $V$ of dimension $\dim(W) > 3$, if $H(W) \neq G_3(W)$ then $K(H(W))$ is the kernel of $H(W)$, otherwise $K(H(W)) = \bigwedge^2W$.  According to \eqref{eqB},
 given a point $\langle v \rangle\in\cG_1(V)$, we can regard $\cS_{\langle v\rangle}(H)$ as the polar space associated to the following
 bilinear alternating form $f_{H,\langle v\rangle}:V/\langle v\rangle\times V/\langle v\rangle\rightarrow \KK$:
\begin{equation} \label{eD}
f_{H,\langle v\rangle}(\langle v\rangle+x,\langle v\rangle+y) ~ = ~ f_H(v\wedge x\wedge y) ~ (= ~\tilde{f}_H(x\wedge y)(v)).
\end{equation}
As in Subsection~\ref{depth}, we denote by $\delta(H)$ the depth of $H$. Recall that $n = \dim(V)$.

\begin{lemma}\label{cc0}
Let $t$ be a non-negative integer, $t \leq (n-2)/2$. Then either $\delta(H) \geq n-1-2t$ or $\dim(\bigwedge^2W/K(H(W))) = n-t$ for every subspace $W$ of $V$ with $\dim(W) = n-t$.
\end{lemma}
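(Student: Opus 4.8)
\textbf{Proof proposal for Lemma~\ref{cc0}.}

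The plan is to argue by contraposition on a single subspace $W$ of dimension $n-t$: assuming $\dim(\bigwedge^2W/K(H(W))) < n-t$, I will produce a point of $\PG(V)$ of degree at least $n-1-2t$, which gives $\delta(H) \geq n-1-2t$. The starting observation is that, for $W$ with $\dim(W) = n-t$, the linear map $\tilde{f}_{H|W}:\bigwedge^2W \to W^*$ has image of dimension $\dim(\bigwedge^2W) - \dim(K(H(W)))$; but this image is exactly $\bigwedge^2W/K(H(W))$ as a $\KK$-vector space, so the hypothesis $\dim(\bigwedge^2W/K(H(W))) < n-t = \dim(W^*)$ says precisely that $\tilde{f}_{H|W}$ is \emph{not} surjective onto $W^*$. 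Equivalently, there is a non-zero vector $v \in W$ annihilated by the image of $\tilde{f}_{H|W}$, i.e.\ $f_H(v\wedge x\wedge y) = 0$ for all $x,y \in W$. (When $H(W) = G_3(W)$ the form $\tilde{f}_{H|W}$ is null, so the same conclusion holds trivially for every $v \in W$.)

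The second step is to translate this into a statement about the polar space $\cS_{\langle v\rangle}(H)$ via \eqref{eD}. The condition $f_H(v\wedge x\wedge y) = 0$ for all $x,y\in W$ says that the bilinear alternating form $f_{H,\langle v\rangle}$ on $V/\langle v\rangle$ vanishes identically on the subspace $[W/\langle v\rangle]$, which has rank $n-t-1$ inside $\PG(V/\langle v\rangle)$ (here $v\in W$, so $\langle v\rangle \subset W$). A totally isotropic subspace of rank $n-t-1$ is contained in the radical together with a complement of controlled size: precisely, if a symplectic polar space in a projective space of rank $m$ (here $m = n-1$, the rank of $\PG(V/\langle v\rangle)$) admits a totally isotropic subspace of rank $r$, then its radical has rank at least $2r - m$. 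Applying this with $r = n-t-1$ and $m = n-1$ gives $\rk(R_{\langle v\rangle}(H)) \geq 2(n-t-1) - (n-1) = n-1-2t$. By definition $r(v) = \rk(R_{\langle v\rangle}(H))$, so $\delta(H) \geq r(v) \geq n-1-2t$, as desired. The hypothesis $t \leq (n-2)/2$ is exactly what makes $n-1-2t \geq 1$, so the bound is not vacuous; and it also guarantees $\dim(W) = n-t \geq 2$ so that $\bigwedge^2 W \neq 0$.

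The only genuinely non-routine point is the inequality $\rk(\mathrm{radical}) \geq 2r - m$ for a symplectic (possibly degenerate) polar space containing a totally isotropic subspace of rank $r$ in a projective space of rank $m$; everything else is bookkeeping with dimensions and the dictionary already set up in Subsections~\ref{algebraic approach} and~\ref{subsec 3}. That inequality is standard: a maximal totally isotropic subspace $T$ of a symplectic form has rank $(m + \rho)/2$ where $\rho$ is the rank of the radical, so any totally isotropic subspace has rank at most $(m+\rho)/2$, i.e.\ $r \leq (m+\rho)/2$, which rearranges to $\rho \geq 2r - m$. The main thing to be careful about is the degenerate case $H(W) = G_3(W)$ and, more importantly, the indexing of ranks versus projective dimensions (the subspace $[W/\langle v\rangle]$ of $\PG(V/\langle v\rangle)$ has projective dimension $n-t-2$, hence rank $n-t-1$, and $\PG(V/\langle v\rangle)$ has projective dimension $n-2$, hence rank $n-1$) — getting these off-by-one counts right is what the proof really rests on.
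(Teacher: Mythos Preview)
Your proof is correct and follows essentially the same route as the paper's: argue by contraposition, use the failure of surjectivity of $\tilde{f}_{H|W}$ to produce a non-zero $v\in W$ with $f_H(v\wedge x\wedge y)=0$ for all $x,y\in W$, interpret $[W/\langle v\rangle]$ as a totally isotropic subspace of rank $n-t-1$ in ${\cS}_{\langle v\rangle}(H)$, and deduce $r(v)\geq n-1-2t$ from the standard bound on maximal singular subspaces of a degenerate symplectic polar space. Your explicit handling of the degenerate case $H(W)=G_3(W)$ and the rank-versus-projective-dimension bookkeeping are welcome clarifications but do not change the argument.
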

\begin{proof}
Suppose that for some $W<V$ with $\dim(W) =n-t$ we have
$\dim(\bigwedge^2W/K(H(W)))<n-t$, namely the mapping
$\tilde{f}_{H|W}:\bigwedge^2W\rightarrow W^*$ is not surjective. Then the image $T := \mathrm{im}\tilde{f}_{H|W}$ of $\tilde{f}_{H|W}$ is contained in a hyperplane $U$ of $W^*$. In particular, there exists $w\in W\setminus\{0\}$ such that
for all $u^*\in U$, and thus for all $u^*\in T$, we have $u^*(w)=0$.
Hence $f_{H|W}(x\wedge y\wedge w) = \tilde{f}_{H|W}(x\wedge y)(w)=0$ for all $x, y \in W$, since $\tilde{f}_{H|W}(x\wedge y)\in T.$ Therefore
$f_H(x\wedge y \wedge w) = 0$ for all $x, y \in W$, since $f_{H|W}$ is just the restriction of $f_H$ to $\bigwedge^3W$. Equivalently (compare (\ref{eD})),
\begin{equation}\label{eDD}
f_{H,\langle w\rangle}(\langle w\rangle+x,\langle w\rangle+y) ~ = ~ 0, ~~~ \forall x, y \in W.
\end{equation}
Put $p := \langle w\rangle$. In view of (\ref{eDD}), the projective space $[W/p]$ is a totally isotropic subspace of the polar space ${\cS}_{p}(H)$ (i.e. a singular subspace in the sense of \cite{shult11}). As $\dim(W/p) = n-t-1$, the projective space $[W/p]$ has rank $\rk[W/p] = n-t-1$.

Recall that the degree $r(p)$ of $p$ is the rank of the radical $R_p(H)$ of ${\cS}_p(H)$. Since ${\cS}_p(H)$ is a polar space of symplectic type embedded in $V/p$ and $\dim(V/p) = n-1$, a maximal singular subspace of ${\cS}_p(H)$ has rank equal to $(n-1+ r(p))/2$. However, $[W/p]$ is totally isotropic and $\rk[W/p] = n-t-1$. Therefore $(n-1+r(p))/2 \geq n-t-1$. It follows that $r(p) \geq n-2t-1$. Consequently, $\delta(H) \geq n-2t-1$.
\end{proof}

\begin{prop}\label{cxx}
Let $t$ be a non-negative integer, $t \leq (n-2)/2$. Then either $\delta(H) \geq n-1-2t$ or $\dim (K(H(W))/(K(H)\cap K(H(W)))) ~ \leq ~ t$ for any $(n-t)$-dimensional subspace $W$ of $V$.
\end{prop}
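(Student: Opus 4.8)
The plan is to reduce the statement to Lemma~\ref{cc0} by means of a dimension count relating the four spaces $\bigwedge^2 W$, $K(H(W))$, $K(H)\cap\bigwedge^2 W$ and $K(H)\cap K(H(W))$. The key observation is that, since $K(H)\cap\bigwedge^2 W\subseteq K(H(W))$ (this inclusion was recorded at the end of Subsection~\ref{algebraic approach}, in the case $k=3$), we have $K(H)\cap\bigwedge^2 W = K(H)\cap K(H(W))$; indeed any vector of $\bigwedge^2 W$ lying in $K(H)$ automatically lies in $K(H(W))$, so intersecting with $K(H(W))$ changes nothing. Hence the quotient appearing in the statement is
\[
K(H(W))/(K(H)\cap K(H(W))) ~=~ K(H(W))/(K(H)\cap\bigwedge^2 W),
\]
and its dimension is $\dim K(H(W)) - \dim(K(H)\cap\bigwedge^2 W)$.

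Next I would bound $\dim(K(H)\cap\bigwedge^2 W)$ from below. The map $\tilde f_{H|W}$ is the composite of the inclusion $\bigwedge^2 W\hookrightarrow\bigwedge^2 V$, the map $\tilde f_H:\bigwedge^2 V\to V^*$, and the restriction $V^*\to W^*$. Its kernel is exactly $K(H(W))$ when $H(W)\ne G_3(W)$, and is all of $\bigwedge^2 W$ otherwise. On the other hand $K(H)\cap\bigwedge^2 W$ is the kernel of $\tilde f_H$ restricted to $\bigwedge^2 W$, i.e. the kernel of the composite of just the first two maps; so $K(H)\cap\bigwedge^2 W = \tilde f_{H|W}^{-1}(0)\cap\tilde f_H^{-1}$ — more precisely, a vector $\omega\in\bigwedge^2 W$ lies in $K(H)$ iff $\tilde f_H(\omega)=0$ in $V^*$, which is stronger than $\tilde f_{H|W}(\omega)=0$ in $W^*$. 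The difference is governed by how much of $\mathrm{im}\,\tilde f_{H|W}$, viewed inside $W^*$, can be "lifted" to vanish on all of $V$; concretely, $\dim(K(H(W))) - \dim(K(H)\cap\bigwedge^2 W)$ equals the dimension of the image of $\tilde f_{H|W}:\bigwedge^2 W\to W^*$ that is \emph{not} hit by the restriction of elements of $\mathrm{im}\,\tilde f_H$ vanishing identically — but a cleaner route is simply to compare $\dim\mathrm{im}(\tilde f_{H|W})$ with $\dim\mathrm{im}(\tilde f_H|_{\bigwedge^2 W})$, the latter living in $V^*$. We have $\dim(\bigwedge^2 W) - \dim(K(H)\cap\bigwedge^2 W) = \dim\mathrm{im}(\tilde f_H|_{\bigwedge^2 W}) \ge \dim\mathrm{im}(\tilde f_{H|W}) = \dim(\bigwedge^2 W) - \dim K(H(W))$, so in fact $\dim K(H(W))\ge\dim(K(H)\cap\bigwedge^2 W)$, and the gap is
\[
\dim\frac{K(H(W))}{K(H)\cap\bigwedge^2 W} ~=~ \dim\mathrm{im}(\tilde f_H|_{\bigwedge^2 W}) - \dim\mathrm{im}(\tilde f_{H|W}).
\]
The image of $\tilde f_H|_{\bigwedge^2 W}$ sits in $V^*$, the image of $\tilde f_{H|W}$ is its further projection into $W^*=V^*/(V/W)^{\perp\!}$; so the gap is at most $\dim(V/W)^{\circ} = t$, the codimension of $W$ in $V$ — \emph{provided} $\mathrm{im}(\tilde f_H|_{\bigwedge^2 W})$ is not too small relative to $W^*$.

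To finish, I would invoke Lemma~\ref{cc0} to control $\dim\mathrm{im}(\tilde f_{H|W}) = \dim(\bigwedge^2 W/K(H(W)))$. Assume $\delta(H) < n-1-2t$; then Lemma~\ref{cc0} gives $\dim(\bigwedge^2 W/K(H(W))) = n-t = \dim W^*$, i.e. $\tilde f_{H|W}$ is surjective onto $W^*$. In particular $\mathrm{im}(\tilde f_H|_{\bigwedge^2 W})$ maps onto $W^*$ under the projection $V^*\twoheadrightarrow W^*$, whose kernel has dimension $n-(n-t)=t$; hence $\dim\mathrm{im}(\tilde f_H|_{\bigwedge^2 W}) \le (n-t) + t = n$, and
\[
\dim\frac{K(H(W))}{K(H)\cap\bigwedge^2 W} ~=~ \dim\mathrm{im}(\tilde f_H|_{\bigwedge^2 W}) - (n-t) ~\le~ t,
\]
which is exactly the desired bound, once we recall the identification $K(H)\cap\bigwedge^2 W = K(H)\cap K(H(W))$ from the first paragraph. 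So the dichotomy of Proposition~\ref{cxx} follows.

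**Main obstacle.** The delicate point is the bookkeeping of the three maps and making the "at most $t$" count honest: one must be careful that $K(H)\cap\bigwedge^2 W$ really is the kernel of $\tilde f_H$ restricted to $\bigwedge^2 W$ (not of $\tilde f_{H|W}$), and that in the surjective case the relevant image in $V^*$ has dimension at most $n$ rather than something larger. The surjectivity supplied by Lemma~\ref{cc0} is precisely what closes this gap; without the hypothesis $\delta(H)<n-1-2t$ the image of $\tilde f_{H|W}$ could be deficient in $W^*$ and the quotient could exceed $t$. Everything else is elementary linear algebra about ranks of composed linear maps, so I expect no further difficulty.
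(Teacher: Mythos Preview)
Your argument is correct and follows essentially the same route as the paper's: both assume $\delta(H)<n-1-2t$, invoke Lemma~\ref{cc0} to get $\dim(\bigwedge^2 W/K(H(W)))=n-t$, identify $K(H)\cap K(H(W))$ with $K(H)\cap\bigwedge^2 W$, and then bound the quotient dimension by $t$. Where the paper reaches the bound $\dim(K(H)\cap\bigwedge^2 W)\geq\binom{n-t}{2}-n$ via Grassmann's formula together with inequality~\eqref{sigma}, you obtain the rank-nullity dual statement $\dim\mathrm{im}(\tilde f_H|_{\bigwedge^2 W})\leq n$ directly from the fact that this image sits inside $V^*$---the two computations are the same inequality read in complementary form.
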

\begin{proof}
Suppose $\delta(H) < n-1-2t$. By Lemma~\ref{cc0}, for any $W\leq V$ with $\dim W=n-t$ we have $\dim K(H(W)))={{n-t}\choose 2}-n+t$. By Grassmann's formula and~(\ref{sigma}),
\[ \begin{array}{ll}
     \dim(K(H)\cap\bigwedge^2W)\geq &
     \dim K(H)+\dim \bigwedge^2W-\dim\bigwedge^2V\geq \\[3mm]
 & \displaystyle {n\choose 2}-n+{{n-t}\choose 2}-
  {n\choose 2}={{n-t}\choose 2}-n.
  \end{array} \]
Since $K(H)\cap\bigwedge^2W\subseteq K(H(W))\subseteq \bigwedge^2W$, we have $K(H)\cap K(H(W)) = K(H)\cap \bigwedge^2W$. Hence
$\dim(K(H)\cap K(H(W))) = \dim(K(H)\cap\bigwedge^2W) \geq {{n-t}\choose 2}-n$.
  The result now follows from the equality $\dim K(H(W)) = {{n-t}\choose 2}-n+t$.
\end{proof}

\section{Constructions and classifications}\label{Sect3}

In the first part of this section (subsections~\ref{trivial} and~\ref{symplectic hyperplane section}) we shall describe two ways to construct hyperplanes of $\cG_k(V)$ that work for any choice of $n = \dim(V)$ and $k < n$. In the second part, we turn our attention to the cases $k = 3$ and $5 \leq n \leq 7$, giving a survey of what is presently known on hyperplanes of $\cG_3(V)$ for these values of $n$. In particular, when $n = 5$ only two types of hyperplanes exist. A complete classification is also available for $n = 6$ while for $n = 7$ a classification is known only under the assumption that the underlying field $\KK$ of $V$ is perfect and belongs to the class ${\bf Cd}\{0,1\}$ of fields of cohomological dimension at most $1$.

\subsection{Trivial extensions, trivial hyperplanes and lower radicals}\label{trivial}

Let $V = V_0\oplus V_1$ be a decomposition of $V$ as the direct sum of two non-trivial subspaces $V_0$ and $V_1$. Put $n_0 := \dim(V_0)$ and assume that $n_0 \geq k$ ($\geq 3$). Let $\varphi_0:V_0\times\cdots\times V_0\rightarrow \KK$ be a non-trivial $k$-linear alternating form on $V_0$. The form $\varphi_0$ can naturally be extended to a $k$-linear alternating form $\varphi$ of $V$ by setting
\begin{equation}\label{extended form}
\left.\begin{array}{rcll}
\varphi(x_1,\dots, x_k) & = & 0 & \mbox{if} ~ x_i\in V_1 ~ \mbox{for some} ~ 1\leq i\leq k,\\
\varphi(x_1,\dots, x_k) & = & \varphi_0(x_1,\dots, x_k) & \mbox{if} ~ x_i\in V_0 ~\mbox{for all} ~ 1\leq i\leq k,
\end{array}\right\}
\end{equation}
and then extending by linearity. Let $H_\varphi$ be the hyperplane of $\cG_k(V)$ defined by $\varphi$. The following is straightforward.

\begin{theorem}\label{extended hyperplane}
Assuming that $k < n_0$, let $H_0$ be the hyperplane of $\cG_k(V_0)$ defined by $\varphi_0$. Let also $\pi: V\rightarrow V_0$ be the projection of $V$ onto $V_0$ along $V_1$. Then,
\[H_\varphi ~ = ~ \{X\in G_k(V)~\colon~ \mbox{either}~ X\cap V_1 \neq 0 ~\mbox{or}~ \pi(X)\in H_0\}.\]
\end{theorem}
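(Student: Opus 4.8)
The plan is to show the claimed set equality by double inclusion, working directly from the definition of $H_\varphi$ as the set of $k$-subspaces $X$ on which $\varphi$ vanishes identically, together with the explicit formula \eqref{extended form} for $\varphi$.

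First I would prove $\supseteq$. Let $X \in G_k(V)$ with $X\cap V_1\neq 0$. Pick $0\neq x_1\in X\cap V_1$ and extend to a basis $x_1,\dots,x_k$ of $X$. Since $x_1\in V_1$, the first clause of \eqref{extended form} gives $\varphi(x_1,\dots,x_k)=0$, and by multilinearity and alternation $\varphi$ vanishes on every $k$-tuple of vectors of $X$ (any such tuple spans a subspace of $X$, hence its $\varphi$-value is a scalar multiple of $\varphi(x_1,\dots,x_k)$ after expressing things in the chosen basis); thus $X\in H_\varphi$. Now suppose instead $\pi(X)\in H_0$, i.e. $\varphi_0$ vanishes on $\pi(X)$. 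Take any $y_1,\dots,y_k\in X$ and write $y_i = y_i' + y_i''$ with $y_i'=\pi(y_i)\in V_0$ and $y_i''\in V_1$. Expanding $\varphi(y_1,\dots,y_k)$ multilinearly, every term containing at least one $y_i''\in V_1$ vanishes by the first clause of \eqref{extended form}, so $\varphi(y_1,\dots,y_k)=\varphi(y_1',\dots,y_k')=\varphi_0(y_1',\dots,y_k')$, and the latter is $0$ because $y_1',\dots,y_k'\in \pi(X)$ and $\varphi_0$ vanishes on $\pi(X)$. Hence $X\in H_\varphi$.

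For the reverse inclusion $\subseteq$, suppose $X\in H_\varphi$ but $X\cap V_1 = 0$; I must show $\pi(X)\in H_0$. Since $X\cap V_1=0$, the projection $\pi$ restricts to an isomorphism $X\to\pi(X)$, so $\dim\pi(X)=k$ and $\pi(X)$ is a point of $\cG_k(V_0)$; it remains to see $\varphi_0$ vanishes on it. Given any $z_1',\dots,z_k'\in\pi(X)$, choose their $\pi$-preimages $z_1,\dots,z_k\in X$. The same multilinear expansion as above yields $\varphi_0(z_1',\dots,z_k')=\varphi(z_1,\dots,z_k)=0$ because $X\in H_\varphi$. Hence $\pi(X)\in H_0$, completing the proof. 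One should also note in passing that under the hypotheses $k<n_0$ and $\varphi_0$ non-trivial, $H_0$ is genuinely a hyperplane of $\cG_k(V_0)$ and $\varphi$ is genuinely non-trivial (it restricts to $\varphi_0$ on $V_0$), so $H_\varphi$ is well-defined as a hyperplane; this is what licenses the phrase ``the hyperplane $H_\varphi$ defined by $\varphi$''.

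There is no serious obstacle here: the whole argument is the routine observation that, because $\varphi$ kills any $k$-tuple meeting $V_1$, evaluating $\varphi$ on vectors of $X$ is the same as evaluating $\varphi_0$ on their $V_0$-projections, and that $\pi|_X$ is injective precisely when $X\cap V_1=0$. The only point requiring a little care is the bookkeeping in the multilinear expansion $\varphi(y_1'+y_1'',\dots,y_k'+y_k'')=\sum_{S\subseteq\{1,\dots,k\}}\varphi(\dots)$ and the remark that all $2^k-1$ mixed terms drop out; I would state this cleanly once and reuse it for both inclusions rather than repeating it. Accordingly I expect the proof to be short — essentially the two displayed expansions plus the injectivity remark — which is consistent with the paper calling the statement ``straightforward''.
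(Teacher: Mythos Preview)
Your proposal is correct and is precisely the routine verification the paper has in mind: the paper gives no proof at all, merely declaring the statement ``straightforward'', and your double-inclusion argument via the multilinear expansion $\varphi(y_1'+y_1'',\dots,y_k'+y_k'')$ together with the observation that $\pi|_X$ is injective iff $X\cap V_1=0$ is exactly the obvious way to unpack that word. There is nothing to add.
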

The properties gathered in the next corollary immediately follow from Theorem~\ref{extended hyperplane}.

\begin{corollary}\label{ext hyp cor}
Let $k < n_0$. Then all the following properties hold:
\begin{enumerate}[(1)]
\item\label{ext-1} $R_\downarrow(H_\varphi) ~ = ~ \langle R_\downarrow(H_0) \cup [V_1]\rangle$ ~where the span in taken in $\PG(V)$.
\item\label{ext-2} $R^\uparrow(H_\varphi) ~= ~ \{X\in G_{k-1}(V)~ \colon~  \mbox{either}~ X\cap V_1\neq 0 ~\mbox{or}~ \pi(X)\in R^\uparrow(H_0)\}.$
\item\label{ext-3} For $X\in G_{k-2}(V)$, if $X\cap V_1\neq 0$ then $R_X(H_\varphi) = [V/X]$ (namely ${\cS}_X(H_\varphi)$ is trivial), otherwise $R_X(H_\varphi)$ has the same rank as the span of $R_{\pi(X)}(H_0)\cup [(V_1+ \pi(X))/\pi(X)]$ in $\PG(V/\pi(X))$.
\end{enumerate}
\end{corollary}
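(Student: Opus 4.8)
The plan is to derive all three items as direct consequences of the explicit description of $H_\varphi$ given by Theorem~\ref{extended hyperplane}, treating each one by unwinding the relevant definitions. Throughout I write $\pi:V\to V_0$ for the projection along $V_1$, and I use repeatedly the elementary fact that for a subspace $X$ of $V$ we have $\pi(X)\cap V_1=0$, and that $\dim\pi(X)=\dim X$ precisely when $X\cap V_1=0$, while more generally $\dim X = \dim\pi(X)+\dim(X\cap V_1)$.

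For item~\eqref{ext-1}, recall that $R_\downarrow(H_\varphi)=[\Rad(\varphi)]$ and $R_\downarrow(H_0)=[\Rad(\varphi_0)]$. From \eqref{extended form} one sees immediately that $V_1\subseteq\Rad(\varphi)$, since any argument taken from $V_1$ kills $\varphi$; and that for $v\in V_0$ one has $\varphi(x_1,\dots,x_{k-1},v)=0$ for all $x_i\in V$ if and only if $\varphi_0(y_1,\dots,y_{k-1},v)=0$ for all $y_i\in V_0$ (reduce each $x_i$ modulo $V_1$ using multilinearity and the first line of \eqref{extended form}), i.e.\ iff $v\in\Rad(\varphi_0)$. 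Hence $\Rad(\varphi)=\Rad(\varphi_0)\oplus V_1$, which is exactly the asserted span in $\PG(V)$. For item~\eqref{ext-2}, a $(k-1)$-space $X=\langle a_1,\dots,a_{k-1}\rangle$ lies in $R^\uparrow(H_\varphi)$ iff $\varphi(a_1,\dots,a_{k-1},x)=0$ for all $x\in V$. If $X\cap V_1\neq0$ then, choosing $a_{k-1}\in V_1$, every such value vanishes by \eqref{extended form}, so $X\in R^\uparrow(H_\varphi)$. If $X\cap V_1=0$, then $\pi|_X$ is injective, $\pi(X)$ is a genuine $(k-1)$-space of $V_0$, and the same reduction-modulo-$V_1$ argument as above shows $\varphi(a_1,\dots,a_{k-1},x)=\varphi_0(\pi a_1,\dots,\pi a_{k-1},\pi x)$; since $\pi$ is surjective onto $V_0$, this vanishes for all $x\in V$ iff $\pi(X)\in R^\uparrow(H_0)$. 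This is precisely the stated description.

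For item~\eqref{ext-3}, fix $X\in G_{k-2}(V)$ and recall that ${\cS}_X(H_\varphi)$ is the polar space on $[V/X]$ defined (up to scalar) by the alternating bilinear form $b_X(X+u,X+v)=\varphi(\xi_1,\dots,\xi_{k-2},u,v)$, where $\langle\xi_1,\dots,\xi_{k-2}\rangle=X$; its radical $R_X(H_\varphi)$ is $[\Rad(b_X)]$. If $X\cap V_1\neq0$, say $\xi_{k-2}\in V_1$, then $b_X\equiv0$ by \eqref{extended form}, so $\Rad(b_X)=V/X$ and $R_X(H_\varphi)=[V/X]$, i.e.\ ${\cS}_X(H_\varphi)$ is trivial. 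If $X\cap V_1=0$, put $X_0:=\pi(X)$, a $(k-2)$-space of $V_0$; choosing the $\xi_i$ so that the $\pi\xi_i$ span $X_0$ and reducing $u,v$ modulo $V_1$, we get $b_X(X+u,X+v)=\varphi_0(\pi\xi_1,\dots,\pi\xi_{k-2},\pi u,\pi v)=b_{X_0}(X_0+\pi u,X_0+\pi v)$, where $b_{X_0}$ is the form defining ${\cS}_{X_0}(H_0)$ on $V_0/X_0$. So $b_X$ is the pullback of $b_{X_0}$ along the natural surjection $V/X\twoheadrightarrow V_0/X_0$ (whose kernel is $(V_1+X)/X$, a space of dimension $\dim V_1$), and therefore $\Rad(b_X)$ is the preimage of $\Rad(b_{X_0})$, i.e.\ $\Rad(b_X)/\big((V_1+X)/X\big)\cong\Rad(b_{X_0})$. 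Passing to projective spaces, $R_X(H_\varphi)$ is the span in $\PG(V/X)\cong\PG(V/X_0)$ (identifying via $X_0\leftrightarrow X$, using $\pi$) of $R_{X_0}(H_0)$ together with $[(V_1+X_0)/X_0]$, which has the claimed rank.

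The computations are all routine once the key reduction identity $\varphi(\cdots,u_1,\dots,u_j)=\varphi_0(\cdots,\pi u_1,\dots,\pi u_j)$ (valid whenever the omitted arguments lie in $V_0$) is in hand; the only point demanding any care is the bookkeeping in item~\eqref{ext-3}, namely correctly identifying the kernel of $V/X\to V_0/X_0$ as $(V_1+X)/X$ and hence phrasing the conclusion in terms of the \emph{span} of $R_{\pi(X)}(H_0)\cup[(V_1+\pi(X))/\pi(X)]$ rather than a direct sum — the rank of a span of two subspaces through a common point behaves additively here precisely because $R_{X_0}(H_0)$ and $[(V_1+X_0)/X_0]$ meet only in the point $X_0$ (as $R_{X_0}(H_0)\subseteq[V_0/X_0]$ and $(V_1+X_0)/X_0\cap V_0/X_0=0$). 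No serious obstacle is expected; this is why the paper labels the statement ``straightforward.''
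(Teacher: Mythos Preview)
Your argument is correct and is exactly the kind of definition-unwinding the paper has in mind; the paper itself gives no proof beyond the sentence ``The properties gathered in the next corollary immediately follow from Theorem~\ref{extended hyperplane}.'' Your write-up simply supplies the details the authors suppressed, and the only delicate point --- identifying the kernel of $V/X\twoheadrightarrow V_0/\pi(X)$ as $(V_1+X)/X$ and checking that $R_{\pi(X)}(H_0)$ and $[(V_1+\pi(X))/\pi(X)]$ meet trivially in $V/\pi(X)$ --- is handled correctly.
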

\begin{note}
 When $k = 3$, claim~(\ref{ext-3}) of Corollary~\ref{ext hyp cor} can be rephrased as follows: the points $p\not\in [V_1]$ have degree $r(p) = r_0(\pi(p)) + n-n_0$, where $r_0(\pi(p))$ is the degree of $\pi(p)$ with respect to $H_0$. The points $p\in [V_1]$ have degree $n-1$.
\end{note}

We call $H_\varphi$ the \emph{trivial extension of $H_0$  centered at $V_1$}
 (also \emph{extension of $H_0$ by $V_1$}, for short) and we denote it by the symbol $H_0\odot V_1$. When convenient, we shall
 take the liberty of writing $H_0\odot[V_1]$ instead of $H_0\odot V_1$, referring to the projective space $[V_1]$ instead of $V_1$.

In Theorem~\ref{extended hyperplane} and Corollary~\ref{ext hyp cor} we have assumed $k < n_0$ in order to introduce the hyperplane $H_0$ associated to $\varphi_0$, but the case $k = n_0$ can also be dealt with, modulo some conventions. Let $k = n_0$; then $\varphi_0(x_1,\ldots, x_k) = 0$ if and only if the vectors $x_1,\ldots, x_k$ are linearly dependent. We can still give $H_0$ a meaning, stating that in this case $H_0 = \emptyset$. We can also stress the terminology stated in the introduction of this paper, putting $\cG_k(V_0) = \{V_0\}$ and regarding $\emptyset$ as the unique hyperplane of $\{V_0\}$. Accordingly, $R_\downarrow(H_0) = R^\uparrow(H_0) = \emptyset$. Also ${\cS}_X(H_0) = G_{k-1}(X)$ (just a set of points, with no lines) and  $R_X(H_0) = \emptyset$ for every $X\in G_{k-2}(V_0)$. With this conventions, Theorem~\ref{extended hyperplane} and Corollary~\ref{ext hyp cor} remain valid, word by word. Thus, we feel we are allowed to denote $H_\varphi$ by the symbol $H_0\odot V_1$ and call it the \emph{trivial extension} of $H_0$ by $V_1$ even in the case $k = n_0$. However, it will be convenient to have also a different name and a different symbol for this situation: when $k = n_0$ (namely $H_0 = \emptyset$) we shall
call $H_0\odot V_1$ the \emph{trivial hyperplane centered at} $V_1$ (or \emph{at} $[V_1]$, if we prefer so).

When $H_0\odot V_1$ is trivial in the above sense, Theorem~\ref{extended hyperplane} and Corollary~\ref{ext hyp cor} can also be rephrased as follows, with no mention of $H_0$.

\begin{prop}\label{ext hyp sing}
Let $H$ be the trivial hyperplane of $\cG_k(V)$ centered at $V_1$. Then
\[H ~ = ~ \{X\in G_k(V)~\colon~ X\cap V_1 \neq 0\}.\]
Moreover, $R_\downarrow(H) ~ = ~  [V_1]$, $R^\uparrow(H) ~= ~ \{X\in G_{k-1}(V)~ \colon~ X\cap V_1\neq 0\}$ and, for $X\in G_{k-2}(V)$, if $X\cap V_1\neq 0$ then $R_X(H) = [V/X]$, otherwise $R_X(H) = [(V_1+ X)/X]$.
\end{prop}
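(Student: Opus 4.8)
\textbf{Proof plan for Proposition~\ref{ext hyp sing}.}

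The plan is to derive the statement as the special case $n_0 = k$ of Theorem~\ref{extended hyperplane} and Corollary~\ref{ext hyp cor}, using the conventions introduced just above for the degenerate situation $H_0 = \emptyset$. First I would record that, when $n_0 = k$, the form $\varphi_0$ on $V_0$ is (up to a scalar) the determinant form, so $\varphi_0(x_1,\dots,x_k) = 0$ precisely when $x_1,\dots,x_k$ are linearly dependent, i.e. when $\langle x_1,\dots,x_k\rangle \neq V_0$; this is why the associated ``hyperplane'' $H_0$ of $\cG_k(V_0) = \{V_0\}$ is empty. Plugging $H_0 = \emptyset$ into the description of $H_\varphi$ given by Theorem~\ref{extended hyperplane}, the alternative ``$\pi(X) \in H_0$'' never occurs, so $H = H_\varphi = \{X \in G_k(V) : X\cap V_1 \neq 0\}$, which is the first assertion.

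Next I would read off the three radical statements from Corollary~\ref{ext hyp cor} with the same substitution. For the lower radical, claim~(\ref{ext-1}) gives $R_\downarrow(H) = \langle R_\downarrow(H_0)\cup [V_1]\rangle = \langle \emptyset \cup [V_1]\rangle = [V_1]$, since $R_\downarrow(H_0) = \emptyset$ by convention. For the upper radical, claim~(\ref{ext-2}) gives $R^\uparrow(H) = \{X\in G_{k-1}(V) : X\cap V_1 \neq 0 \text{ or } \pi(X)\in R^\uparrow(H_0)\}$, and again $R^\uparrow(H_0) = \emptyset$ kills the second alternative, leaving $R^\uparrow(H) = \{X\in G_{k-1}(V) : X\cap V_1 \neq 0\}$. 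For a $(k-2)$-subspace $X$, claim~(\ref{ext-3}) says: if $X\cap V_1 \neq 0$ then $R_X(H) = [V/X]$; otherwise $R_X(H)$ has the rank of the span of $R_{\pi(X)}(H_0)\cup [(V_1+\pi(X))/\pi(X)]$ in $\PG(V/\pi(X))$. Here one must be slightly careful: when $X\cap V_1 = 0$ and $\dim X = k-2 < n_0 = k$, the projection $\pi$ restricted to $X$ is injective, so $\pi(X)$ is a genuine $(k-2)$-subspace of $V_0$, and $R_{\pi(X)}(H_0) = \emptyset$ by the stated convention. Thus the span in question is just $[(V_1+\pi(X))/\pi(X)]$, which has rank $\dim V_1$. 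To finish I would observe that the subspace $R_X(H)$ itself (not merely its rank) equals $[(V_1+X)/X]$: since $X = \pi(X) \oplus (X\cap V_1) = \pi(X)$ when $X\cap V_1 = 0$, we have $V/\pi(X) = V/X$, and the radical of the symplectic polar space $\cS_X(H)$ is forced by its rank together with the fact — clear from the description of $H$ — that every line of $\PG(V/X)$ meeting $[(V_1+X)/X]$ lies in $(X)H$, so $[(V_1+X)/X]\subseteq R_X(H)$ and equality follows by the rank count.

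The only genuinely delicate point is handling the degenerate conventions consistently — checking that ``$H_0 = \emptyset$'', ``$\cG_k(V_0) = \{V_0\}$'', ``$R_\downarrow(H_0) = R^\uparrow(H_0) = \emptyset$'' and ``$R_X(H_0) = \emptyset$'' are exactly what make Theorem~\ref{extended hyperplane} and Corollary~\ref{ext hyp cor} remain literally true when $n_0 = k$, as was asserted in the paragraph preceding the statement. Once that bookkeeping is in place, everything is a direct substitution and there is no real computation; alternatively, each of the four assertions can be checked by a one-line direct argument (for instance, $X\cap V_1\neq 0 \Rightarrow X\wedge x = 0$-type reasoning, or: a $k$-space $X$ with $X\cap V_1 = 0$ has $\dim\pi(X) = k = n_0$, hence $\varphi(X) = \varphi_0(\pi(X)) \neq 0$, so $X\notin H$), which I would include as a remark for the reader who prefers to avoid the conventions. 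I expect the write-up to be short.
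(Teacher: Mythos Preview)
Your approach is exactly the one the paper intends: the proposition is stated as the direct specialization of Theorem~\ref{extended hyperplane} and Corollary~\ref{ext hyp cor} to the case $n_0 = k$, using the conventions $H_0 = \emptyset$, $R_\downarrow(H_0) = R^\uparrow(H_0) = \emptyset$, $R_X(H_0) = \emptyset$ set up in the preceding paragraph, and the paper offers no further argument. Your treatment is in fact more careful than the paper's, since you notice that Corollary~\ref{ext hyp cor}(\ref{ext-3}) literally gives only the \emph{rank} of $R_X(H)$ in the case $X\cap V_1 = 0$, and you supply the missing containment $[(V_1+X)/X]\subseteq R_X(H)$ plus a rank count to pin down the subspace itself; one small slip to fix in the write-up is the claim ``$X = \pi(X)$'' and hence ``$V/\pi(X) = V/X$'' when $X\cap V_1 = 0$ --- this is false unless $X\subseteq V_0$, but it is also unnecessary, since your containment-plus-rank argument already gives $R_X(H) = [(V_1+X)/X]$ without ever identifying $V/X$ with $V/\pi(X)$.
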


By the previous results, the lower radical of a trivial extension is never empty. The converse is also true: if $R_\downarrow(H)\neq\emptyset$ then $H$ is a trivial extension, possibly a trivial hyperplane.

Indeed, let $H$ be a hyperplane of $\cG_k(V)$ and let $\varphi$ be a $k$-linear alternating form on $V$ defining $H$. Let $R := \Rad(\varphi)$ be the radical of $\varphi$. Then $R_\downarrow(H) = [R]$, as noticed in Section~\ref{problems}. Let $S$ be a complement of $R$ in $V$ and $\varphi_S$ the form induced by $\varphi$ on $S$. If $\varphi_S$ is
trivial, then $\varphi$ will be trivial as well, as $R = \Rad(\varphi)$. However $\varphi$ is not trivial, since it defines a hyperplane. Hence, $\varphi_S$ is non-trivial. Put $n_S := \dim(S)$. By Proposition~\ref{codimension lower radical}, the subspace $R_\downarrow(H)$ has codimension at least $k$ in $\PG(V)$. Equivalently, $\cod(R) \geq k$. Hence, $n_S \geq k$. If $n_S > k$, then $H(S) = G_k(S)\cap H$ is the hyperplane of $\cG_k(S)$ associated to the non-trivial form $\varphi_S$. If $n_S = k$, then we put $H(S) := \emptyset$.

\begin{theorem}\label{ext hyp th}
Suppose $R_\downarrow(H) \neq \emptyset$ and let $S$ be a complement in $V$ of the subspace $R < V$ such that $[R] = R_\downarrow(H)$. Then $H ~=~ H(S)\odot R_\downarrow(H)$. Moreover, $R_\downarrow(H(S)) = \emptyset$.
\end{theorem}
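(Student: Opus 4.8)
The plan is to unwind the definitions attached to the trivial extension and check that they match $H$ exactly on the given decomposition $V = S \oplus R$. First I would verify that the hypotheses of Theorem~\ref{extended hyperplane} and Corollary~\ref{ext hyp cor} are met with $V_0 := S$, $V_1 := R$ and $\varphi_0 := \varphi_S$; this uses Proposition~\ref{codimension lower radical} to guarantee $n_S = \dim(S) \geq k$, and the argument preceding the statement (that $\varphi_S$ is non-trivial) to ensure $H(S)$ is a genuine hyperplane of $\cG_k(S)$ when $n_S > k$, or else invokes the conventions introduced for the case $n_S = k$ so that the symbol $H(S)\odot R$ still makes sense. By definition, $H(S)\odot R$ is then the hyperplane of $\cG_k(V)$ defined by the $k$-linear alternating form $\widehat{\varphi_S}$ obtained from $\varphi_S$ by the extension formula~\eqref{extended form} with respect to the decomposition $S \oplus R$.

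The core of the argument is to show $\widehat{\varphi_S} = \varphi$ (up to the irrelevant scalar, which one can normalise away). I would fix a basis of $V$ adapted to the decomposition $V = S\oplus R$ and evaluate both forms on $k$-tuples of basis vectors. If at least one argument lies in $R = \Rad(\varphi)$, then $\varphi$ vanishes on that tuple because $R$ is the radical, and $\widehat{\varphi_S}$ vanishes by the first line of~\eqref{extended form}; if all arguments lie in $S$, then $\widehat{\varphi_S}$ agrees with $\varphi_S$ by the second line of~\eqref{extended form}, and $\varphi_S$ is by definition the restriction of $\varphi$ to $S$. Since the two alternating $k$-linear forms agree on all such basis tuples, they agree everywhere, so $H = H_\varphi = H_{\widehat{\varphi_S}} = H(S)\odot R = H(S)\odot R_\downarrow(H)$, using $[R] = R_\downarrow(H)$.

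Finally, for the claim $R_\downarrow(H(S)) = \emptyset$, I would use that the lower radical of a hyperplane is, by the discussion in Section~\ref{problems}, the projectivisation of the radical of the defining form: $R_\downarrow(H(S)) = [\Rad(\varphi_S)]$ when $n_S > k$, while $R_\downarrow(H(S)) = R_\downarrow(\emptyset) = \emptyset$ by convention when $n_S = k$. In the former case it remains to observe $\Rad(\varphi_S) = 0$: any vector $s \in S$ with $\varphi_S(s, x_2,\ldots,x_{k-1}, x) = 0$ for all $x_i, x \in S$ would, by the extension formula, also satisfy $\varphi(s, y_2,\ldots,y_{k-1},y) = 0$ for all $y_i, y \in V$ (splitting each $y_j$ into its $S$- and $R$-components and noting $\varphi$ kills any tuple touching $R$), hence $s \in \Rad(\varphi) = R$; since $s \in S$ and $S \cap R = 0$, we get $s = 0$. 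I do not expect a genuine obstacle here: the only mild care needed is bookkeeping around the degenerate case $n_S = k$, where $H(S) = \emptyset$ and one must lean on the conventions set up just before the statement rather than on a literal ``radical of a form'' computation.
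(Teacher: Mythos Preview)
Your proposal is correct and follows essentially the same approach as the paper: both verify that $\varphi$ itself satisfies the extension conditions~\eqref{extended form} with respect to the decomposition $S\oplus R$, so that $H = H(S)\odot R_\downarrow(H)$. The only minor difference is in the final claim: the paper deduces $R_\downarrow(H(S)) = \emptyset$ (for $n_S > k$) by invoking Corollary~\ref{ext hyp cor}(\ref{ext-1}), which gives $R_\downarrow(H) = \langle R_\downarrow(H(S))\cup [R]\rangle$ and hence forces $R_\downarrow(H(S))\subseteq [S]\cap [R] = \emptyset$, whereas you argue directly at the level of forms that $\Rad(\varphi_S)\subseteq S\cap R = 0$; these are the same argument in different clothing.
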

\begin{proof}
By assumption, $R \neq 0$. The form $\varphi$ satisfies conditions (\ref{extended form}) with $\varphi_S$ and  $R$ in the roles of $\varphi_0$ and $V_1$ respectively. Hence $\varphi$ is the extension of $\varphi_S$ as defined by those conditions. Consequently, $H$ is the trivial extension of $H(S)$ by $R_\downarrow(H)$. When $n_S > k$ the equality $R_\downarrow(H(S)) = \emptyset$ follows from (\ref{ext-1}) of Corollary~\ref{ext hyp cor} and the fact that $H(S)\odot R_\downarrow(H) = H$. When $n_S = k$, then $R_\downarrow(H(S)) = \emptyset$ by definition.
\end{proof}

\begin{corollary}\label{ext hyp cor1}
With $R$ as in the hypotheses of Theorem~{\rm~\ref{ext hyp th}}, let $S$ and $S'$ be two complements of $R$ in $V$. Then, $H(S) \cong H(S')$.
\end{corollary}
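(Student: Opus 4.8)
\textbf{Proof proposal for Corollary~\ref{ext hyp cor1}.}

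The plan is to reduce the isomorphism $H(S)\cong H(S')$ to the action of a suitable linear automorphism of $V$ that fixes $R$ and carries $S$ to $S'$. First I would record the elementary fact that, since $S$ and $S'$ are both complements of the fixed subspace $R$ in $V$, there is a linear map $g\in\GL(V)$ with $g(R)=R$ and $g(S)=S'$; indeed one may take $g$ to be the identity on $R$ and any linear isomorphism $S\to S'$ extended by linearity, or more canonically the map which on $S$ is the composition of $S\hookrightarrow V$ with the projection $V\to S'$ along $R$ (this is an isomorphism $S\to S'$ precisely because both are complements of $R$). The point I want to extract is that $g$ can be chosen to additionally respect the form $\varphi$ defining $H$, in the sense relevant below.

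Next I would invoke Theorem~\ref{ext hyp th}: with $\varphi$ a $k$-linear alternating form defining $H$ and $\varphi_S$, $\varphi_{S'}$ the induced forms on $S$ and $S'$, we have $H=H(S)\odot R_\downarrow(H)=H(S')\odot R_\downarrow(H)$, and both $\varphi_S$ and $\varphi_{S'}$ are obtained from $\varphi$ by the trivial-extension recipe~\eqref{extended form} with $R$ in the role of $V_1$. Concretely, for $x_1,\dots,x_k\in S$ one has $\varphi_S(x_1,\dots,x_k)=\varphi(x_1,\dots,x_k)$, and writing $\pi_S:V\to S$, $\pi_{S'}:V\to S'$ for the projections along $R$, the relation $\varphi(y_1,\dots,y_k)=\varphi_S(\pi_S y_1,\dots,\pi_S y_k)=\varphi_{S'}(\pi_{S'}y_1,\dots,\pi_{S'}y_k)$ holds for all $y_i\in V$ (both sides compute $\varphi$ after killing the $R$-components, which $\varphi$ annihilates). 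Taking $y_i\in S$ and applying $\pi_{S'}$ shows that the isomorphism $\pi_{S'}|_S:S\to S'$ pulls $\varphi_{S'}$ back to $\varphi_S$. Hence $\pi_{S'}|_S$ is an isomorphism of the pairs $(S,\varphi_S)\to(S',\varphi_{S'})$ of vector spaces equipped with alternating $k$-linear forms.

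Finally I would translate this form-isomorphism into an isomorphism of the Grassmann hyperplanes. An isomorphism $\theta:S\to S'$ with $\theta^*\varphi_{S'}=\varphi_S$ induces an isomorphism of point-line geometries $\cG_k(S)\to\cG_k(S')$, $X\mapsto\theta(X)$, and since $H(S)$ (resp.\ $H(S')$) is exactly the set of $k$-subspaces on which $\varphi_S$ (resp.\ $\varphi_{S'}$) vanishes — this being the defining description of a Grassmann hyperplane recalled in Section~\ref{introduction}, valid also in the degenerate convention $H(S)=\emptyset$ when $n_S=k$ — this geometry isomorphism restricts to a bijection $H(S)\to H(S')$ compatible with the induced line structure. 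Therefore $H(S)\cong H(S')$. I do not anticipate a serious obstacle here; the only point demanding a little care is the compatibility of the projection $\pi_{S'}|_S$ with the forms, i.e.\ checking that $\varphi$'s vanishing on every vector with a nonzero $R$-component (clause~\eqref{extended form}) is precisely what makes $\pi_S$ and $\pi_{S'}$ interchangeable inside $\varphi$; once that identity is in hand the rest is formal. One should also remember to handle the boundary case $n_S=k$ separately, but there both $H(S)$ and $H(S')$ are empty by convention, so the statement is trivially true.
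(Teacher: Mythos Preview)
Your proof is correct and follows essentially the same approach as the paper: both use the restriction to $S$ of the projection $V\to S'$ along $R$ as the isomorphism carrying $H(S)$ to $H(S')$. The only cosmetic difference is that the paper appeals to Theorem~\ref{extended hyperplane} (the description of a trivial extension via the projection) to see that this map sends $H(S)$ onto $H(S')$, whereas you verify directly that $(\pi_{S'}|_S)^*\varphi_{S'}=\varphi_S$ using $R=\Rad(\varphi)$; these are two phrasings of the same computation.
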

\begin{proof}
$H(S)\odot R_\downarrow(H) = H(S')\odot R_\downarrow(H) = H$ by Theorem~\ref{ext hyp th}. The projection of $V$ onto $S'$ along $R$ induces on $S$ an isomorphism $\pi_S:S\stackrel{\cong}{\rightarrow} S'$. By Theorem~\ref{extended hyperplane}, $\pi_S$ maps $H(S)$ onto $H(S')$.
\end{proof}

\begin{corollary}\label{ext hyp cor2}
A hyperplane $H$ of $\cG_k(V)$ is trivial if and only if $\cod(R_\downarrow(H)) = k$.
\end{corollary}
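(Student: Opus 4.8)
The plan is to prove both implications by connecting the condition $\cod(R_\downarrow(H)) = k$ to the structure theorem for hyperplanes with non-empty lower radical already established in Theorem~\ref{ext hyp th}, together with the codimension bound of Proposition~\ref{codimension lower radical}. Recall that, by definition, a hyperplane $H$ is \emph{trivial} precisely when it is a trivial extension $H_0\odot V_1$ with $H_0 = \emptyset$, i.e.\ when the corresponding alternating form has trivial associated form on a complement of its radical; equivalently, by Proposition~\ref{ext hyp sing}, when $H = \{X\in G_k(V) : X\cap V_1\neq 0\}$ for some subspace $V_1$.

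First I would prove the ``only if'' direction. Suppose $H$ is trivial, say $H = H_0\odot V_1$ with $H_0 = \emptyset$, which by the conventions set up before Proposition~\ref{ext hyp sing} means $n_0 := \dim(V_0) = k$, where $V = V_0\oplus V_1$. By Proposition~\ref{ext hyp sing} (or directly by claim~(\ref{ext-1}) of Corollary~\ref{ext hyp cor} together with $R_\downarrow(H_0)=\emptyset$), we have $R_\downarrow(H) = [V_1]$, so $\cod(R_\downarrow(H)) = n - \dim(V_1) = n - (n-k) = k$, as required.

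For the ``if'' direction, suppose $\cod(R_\downarrow(H)) = k$. Since $k < n$, this forces $R_\downarrow(H)\neq\emptyset$: indeed $R_\downarrow(H) = [R]$ for $R = \Rad(\varphi)$, and $\cod(R) = k$ means $\dim(R) = n-k \geq 1$. We may therefore apply Theorem~\ref{ext hyp th}: picking a complement $S$ of $R$ in $V$, we get $H = H(S)\odot R_\downarrow(H)$, where $n_S := \dim(S) = n - \dim(R) = n - (n-k) = k$. But $n_S = k$ is exactly the degenerate case in which, by the conventions adopted in Subsection~\ref{trivial}, $H(S) = \emptyset$; hence $H = \emptyset\odot R_\downarrow(H)$ is by definition the trivial hyperplane centered at the subspace $R$ with $[R] = R_\downarrow(H)$. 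Thus $H$ is trivial.

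I do not expect any serious obstacle here; the statement is essentially a bookkeeping corollary of Theorem~\ref{ext hyp th} and Proposition~\ref{codimension lower radical}. The only point requiring a little care is the handling of the boundary case $n_S = k$ and making sure the conventions introduced for trivial hyperplanes (where $H_0 = \emptyset$, $\cG_k(V_0) = \{V_0\}$, etc.) are invoked correctly, so that ``$H(S) = \emptyset$'' genuinely coincides with ``$H$ is trivial'' in the technical sense defined in the paper. One should also note in passing that the cases $\dim(R) > n-k$ (i.e.\ $\cod(R_\downarrow(H)) < k$) are excluded by Proposition~\ref{codimension lower radical}, so the dichotomy between trivial and non-trivial hyperplanes with non-empty lower radical is governed exactly by whether the codimension equals $k$ or exceeds it.
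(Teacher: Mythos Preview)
Your proof is correct and follows essentially the same approach as the paper, which simply states that the corollary ``immediately follows from Theorem~\ref{ext hyp th}.'' You have spelled out in full the two directions that the paper leaves implicit, correctly invoking Proposition~\ref{ext hyp sing} for the forward direction and the $n_S = k$ convention together with Theorem~\ref{ext hyp th} for the converse.
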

\begin{proof}
This immediately follows from Theorem~\ref{ext hyp th}.
\end{proof}

\subsection{Expansions and symplectic hyperplanes}\label{symplectic hyperplane section}

Let $V_0$ be a hyperplane of $V$ and $H_0$ a given hyperplane of $\cG_{k-1}(V_0)$. As usual, assume $k \geq 3$; hence $V$ has dimension $n \geq 4$. Put:
\[E(H_0) ~:= ~ \{X\in G_k(V) ~\colon~ \mbox{either}~ X\subset V_0 ~\mbox{or}~X\cap V_0 \in H_0\}.\]

\begin{theorem}\label{sympl hyperplane}
The set $E(H_0)$ is a hyperplane of $\cG_k(V)$. Moreover:
\begin{enumerate}[(1)]
\item\label{sym-1}
 $R_\downarrow(E(H_0)) ~= ~ R_\downarrow(H_0)$.
\item\label{sym-2}
 $R^\uparrow(E(H_0)) ~= ~ H_0\cup\{X\in G_{k-1}(V)\setminus G_{k-1}(V_0) ~\colon~ X\cap V_0 \in R^\uparrow(H_0)\}$.
\item\label{sym-3}
 For $X\in \cG_{k-2}(V)$, if $X\subseteq V_0$ with $X\in R^\uparrow(H_0)$ then $R_X(E(H_0)) = {\cS}_X(E(H_0)) = (X)G_{k-1}$ (the latter being computed in $V$). If $X\subseteq V_0$ but $X\not\in R^\uparrow(H_0)$ then $R_X(E(H_0)) = (X)H_0$ (a subspace of $\PG(V_0/X)$). Finally, if $X\not\subseteq V_0$, then $R_X(E(H_0)) ~=~ \{\langle x, Y\rangle~\colon~ Y\in R_{X\cap V_0}(H_0)\}$ for a given $x\in X\setminus V_0$, no matter which.
\end{enumerate}
\end{theorem}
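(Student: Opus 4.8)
The plan is to set up a linear functional representing $E(H_0)$ explicitly and then read off all four assertions from that description. Fix a vector $e\in V\setminus V_0$, so that $V = V_0\oplus\langle e\rangle$. Let $g\in(\bigwedge^{k-1}V_0)^*$ be a non-null functional defining $H_0$, i.e.\ $H_0 = \varepsilon_{k-1}^{-1}(\ker(g)\cap\GG_{k-1}(V_0))$. Every $\omega\in\bigwedge^k V$ decomposes uniquely as $\omega = \omega_0 + e\wedge\omega_1$ with $\omega_0\in\bigwedge^k V_0$ and $\omega_1\in\bigwedge^{k-1}V_0$; define $f(\omega) := g(\omega_1)$. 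This $f$ is a non-null linear functional on $\bigwedge^k V$. First I would check that $E(H_0) = \varepsilon_k^{-1}(\ker(f)\cap\GG_k(V))$: for a decomposable $\omega = x_1\wedge\cdots\wedge x_k$ with $X = \langle x_1,\ldots,x_k\rangle$, if $X\subseteq V_0$ then $\omega_1 = 0$ so $f(\omega)=0$; if $X\not\subseteq V_0$ then, writing $X = \langle X\cap V_0, e'\rangle$ with $e' = e + v$ for some $v\in V_0$ and a decomposable generator $\eta$ of $\bigwedge^{k-1}(X\cap V_0)$, one computes $\omega_1 = \pm\eta$, whence $f(\omega) = 0$ iff $X\cap V_0\in H_0$. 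This is the one genuinely computational step, but it is routine bookkeeping with the wedge decomposition. That $E(H_0)$ is a proper subspace meeting every line follows at once since it is a hyperplane section of $\varepsilon_k$; alternatively one can verify the subspace/hyperplane conditions directly from the set-theoretic description.

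With $f$ in hand, (\ref{sym-1}) is immediate: $v\in\Rad(\alpha_f)$ means $f(v\wedge x_1\wedge\cdots\wedge x_{k-1}) = 0$ for all $x_i\in V$. Testing $x_1 = e$ and $x_2,\ldots,x_{k-1}\in V_0$ forces $v\in V_0$ (using that $g$ is non-null), and then for $v\in V_0$ the condition $f(v\wedge\cdots) = 0$ for all arguments unwinds, via the decomposition above, to $g(v\wedge y_1\wedge\cdots\wedge y_{k-2}) = 0$ for all $y_i\in V_0$, i.e.\ $v\in\Rad(\alpha_g)$; so $R_\downarrow(E(H_0)) = [\Rad(\alpha_f)] = [\Rad(\alpha_g)] = R_\downarrow(H_0)$.

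For (\ref{sym-2}) and (\ref{sym-3}), I would argue directly from the definition of $R^\uparrow$ and $R_X$ rather than from $f$, splitting on the position of a $(k-1)$-space $Y$ (resp.\ a $(k-2)$-space $X$) relative to $V_0$. If $Y\subseteq V_0$: every $k$-space containing $Y$ either lies in $V_0$ or meets $V_0$ in exactly $Y$, and the latter is of the form $\langle Y, e+v\rangle$; such a $k$-space lies in $E(H_0)$ automatically (its trace on $V_0$ is $Y\in H_0$... here one must be careful: the trace is $Y$ only when $Y\subseteq V_0$, and $Y\in H_0$ need not hold). The correct dichotomy is: $Y\subseteq V_0$ lies in $R^\uparrow(E(H_0))$ iff $Y\in H_0$ (if $Y\notin H_0$ then $Y$ itself, regarded inside some $k$-space of $V_0$, already fails — more precisely there is a $k$-space of $V_0$ containing $Y$ not in $H_0$, hence not in $E(H_0)$). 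If $Y\not\subseteq V_0$, then $Y\cap V_0$ has dimension $k-2$ and the $k$-spaces containing $Y$ are $\langle Y, v\rangle$ for $v\in V_0$; their traces on $V_0$ are the $(k-1)$-spaces $\langle (Y\cap V_0), v\rangle$, and all of them lie in $H_0$ iff $Y\cap V_0\in R^\uparrow(H_0)$. Combining gives exactly the stated formula for (\ref{sym-2}). Assertion (\ref{sym-3}) is the same analysis carried out one step lower, applying Proposition~\ref{polar space bis} to identify $R_X(E(H_0))$ with $R^\uparrow(E(H_0))\cap(X)G_{k-1}$ and then intersecting the formula from (\ref{sym-2}) with $(X)G_{k-1}$; the three cases $X\subseteq V_0$ with $X\in R^\uparrow(H_0)$, $X\subseteq V_0$ with $X\notin R^\uparrow(H_0)$, and $X\not\subseteq V_0$ correspond to the three clauses, the last using that $\langle x, Y\rangle\leftrightarrow Y = \langle x,Y\rangle\cap V_0$ is a bijection between $(X)G_{k-1}\setminus G_{k-1}(V_0)$ and $(X\cap V_0)G_{k-2}(V_0)$ for fixed $x\in X\setminus V_0$.

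The main obstacle I anticipate is bookkeeping accuracy rather than any conceptual difficulty: one must keep straight the several meanings of ``trace on $V_0$'' (it equals $Y$ when $Y\subseteq V_0$, but is a proper subspace of $Y$ otherwise), and must handle the borderline case $k = n_0 = \dim V_0$, where $H_0 = \emptyset$ by the convention of Subsection~\ref{trivial}, so $E(H_0) = \{X\in G_k(V): X\subseteq V_0\text{ or }X\cap V_0 = \text{a }(k-1)\text{-space, impossible}\}$ degenerates appropriately — this should be checked to confirm the formulas still read correctly. Everything else reduces to linear algebra over $\KK$ that I would not spell out in detail.
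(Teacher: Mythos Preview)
Your overall strategy is sound, and for claims (\ref{sym-1})--(\ref{sym-3}) you actually do more than the paper, which simply declares them ``straightforward'' and leaves them to the reader. For the hyperplane property your route differs from the paper's: the paper performs a direct case analysis on a generic line $\ell_{Y,Z}$ of $\cG_k(V)$ (splitting on whether $Z\subseteq V_0$, then whether $Y\subseteq V_0$), whereas you construct the defining functional $f$ first and invoke the hyperplane-section description. The paper records exactly your $f$ \emph{after} its proof (equation~\eqref{expanded form}), so both routes are available; yours is slightly less elementary (it leans on Shult's result) but packages the information needed for (\ref{sym-1}) more efficiently.

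Two small execution errors to flag. First, in (\ref{sym-1}) the test ``$x_1=e$, $x_2,\ldots,x_{k-1}\in V_0$'' does not force $v\in V_0$; it forces the $V_0$-component of $v$ into $\Rad(\alpha_g)$. To kill the $e$-component of $v$ you should instead take \emph{all} $x_i\in V_0$: then the $\omega_1$-part of $v\wedge x_1\wedge\cdots\wedge x_{k-1}$ is $a\,x_1\wedge\cdots\wedge x_{k-1}$ (where $v=v_0+ae$), and non-nullity of $g$ gives $a=0$. Second, in (\ref{sym-2}) your parenthetical witness for $Y\subseteq V_0$, $Y\notin H_0$ is wrong: every $k$-space contained in $V_0$ lies in $E(H_0)$ by definition, regardless of $H_0$, so no such $k$-space can witness $Y\notin R^\uparrow(E(H_0))$. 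The correct witness is any $k$-space $Z\supseteq Y$ with $Z\not\subseteq V_0$; then $Z\cap V_0=Y\notin H_0$, hence $Z\notin E(H_0)$. Both fixes are local and do not disturb your plan.

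One further remark: your closing worry about ``the borderline case $k=n_0=\dim V_0$, where $H_0=\emptyset$'' does not arise here. In this theorem $V_0$ is a hyperplane of $V$ and $H_0$ is a hyperplane of $\cG_{k-1}(V_0)$; since $k\geq 3$ and $k<n$, the geometry $\cG_{k-1}(V_0)$ is genuine and $H_0$ is an honest hyperplane. The $H_0=\emptyset$ convention belongs to the trivial-extension construction of \S\ref{trivial}, not to expansions.
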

\begin{proof}
We only prove that $E(H_0)$ is a hyperplane of $\cG_k(V)$. The proofs of claims (\ref{sym-1}), (\ref{sym-2}) and (\ref{sym-3}) are straightforward. We leave them to the reader.

Let $\ell_{Y,Z}$ be a line of $\cG_k(V)$. If $Z\subseteq V_0$ then $\ell_{Y,Z}\subseteq E(H_0)$, by definition of $E(H_0)$. Let $Z\not\subseteq V_0$ but $Y\subseteq V_0$. Then $Z\cap V_0\supset Y$. In this case either $\ell_{Y,Z}\subseteq E(H_0)$ or $\ell_{Y,Z}\cap E(H_0) = \{Y\}$ according to whether $Y\in H_0$ or not. Finally, let $Y\not\subseteq V_0$. Then $\ell_{Y\cap V_0, Z\cap V_0}$ is a line of $\cG_{k-1}(V_0)$ and, given $y\in Y\setminus V_0$, we have $\ell_{Y,Z} ~= ~ \{\langle y, X\rangle ~\colon~ X\in \ell_{Y\cap V_0, Z\cap V_0}\}$. Consequently, either $\ell_{Y,Z}\subseteq E(H_0)$ or $\ell_{Y,Z}\cap E(H_0)$ is a singleton, according to whether $\ell_{Y\cap V_0,Z\cap V_0}\subseteq H_0$ or
 $\ell_{Y\cap V_0,Z\cap V_0}\cap H_0$ is a singleton. In any case, either $\ell_{Y,Z}$ is fully contained in $E(H_0)$ or it meets $E(H_0)$ in one single point. Thus, $E(H_0)$ is a hyperplane of $\cG_k(V)$.
\end{proof}

We call $E(H_0)$ the \emph{expansion} of $H_0$. A form $f:\bigwedge^kV\rightarrow \KK$ associated to $E(H_0)$ can be constructed as follows.
Suppose $f_0:\bigwedge^{k-1}V_0\rightarrow \KK$ defines $H_0$. Given a basis $\{e_1,\ldots, e_n\}$ of $V$ in such a way that $\langle e_1,\ldots, e_{n-1}\rangle = V_0$, consider the canonical basis $\{e_{i_1}\wedge\cdots\wedge e_{i_k} ~\colon ~ 1\leq i_1 < \ldots < i_k \leq n\}$ of $\bigwedge^kV$. Put
\begin{equation}\label{expanded form}
f(e_{i_1}\wedge\cdots\wedge e_{i_k}) = \left\{\begin{array}{l}
0 ~\mbox{if}~ i_k < n,\\
f_0(e_{i_1}\wedge\cdots\wedge e_{i_{k-1}}) ~\mbox{if}~ i_k = n.
\end{array}\right.
\end{equation}
and extend it by linearity. It is not difficult to check that $f$ indeed defines $E(H_0)$.

\subsubsection{Expansions and trivial extensions}

The next corollary immediately follows from (\ref{sym-1}) of Theorem~\ref{sympl hyperplane}.

\begin{corollary}\label{symp hyp cor1}
We have $R_\downarrow(E(H_0)) = \emptyset$ if and only if $R_\downarrow(H_0) = \emptyset$.
\end{corollary}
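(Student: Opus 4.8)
\textbf{Proof proposal for Corollary~\ref{symp hyp cor1}.}

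The plan is to deduce this directly from part (\ref{sym-1}) of Theorem~\ref{sympl hyperplane}, which asserts the identity $R_\downarrow(E(H_0)) = R_\downarrow(H_0)$ as subspaces of $\PG(V)$. Since a set equals the empty set precisely when the other side of an equality is empty, the biconditional is immediate: $R_\downarrow(E(H_0)) = \emptyset$ if and only if $R_\downarrow(H_0) = \emptyset$. The only point deserving a word of care is that in $R_\downarrow(H_0)$ we are dealing with the lower radical of a hyperplane $H_0$ of $\cG_{k-1}(V_0)$, so a priori it is a subspace of $\PG(V_0)$ rather than of $\PG(V)$; but $V_0$ is a hyperplane of $V$, hence $\PG(V_0)$ is naturally a subspace of $\PG(V)$, and the equality in (\ref{sym-1}) is understood with this identification. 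In particular the emptiness of $R_\downarrow(H_0)$ does not depend on whether we view it inside $\PG(V_0)$ or inside $\PG(V)$.

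So the one-line argument is: by Theorem~\ref{sympl hyperplane}(\ref{sym-1}) we have $R_\downarrow(E(H_0)) = R_\downarrow(H_0)$, and therefore one of these sets is empty exactly when the other is.

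There is essentially no obstacle here, since all the content has been pushed into Theorem~\ref{sympl hyperplane}; the corollary is a formal consequence and requires only that we invoke the relevant clause. If one wanted to be fully self-contained and not lean on the ``straightforward, left to the reader'' proof of (\ref{sym-1}), the thing to verify would be that $R_\downarrow(E(H_0))$, i.e. the radical of an alternating $k$-linear form defining $E(H_0)$, coincides with the radical of a form $f_0$ defining $H_0$; using the explicit form $f$ from \eqref{expanded form} one checks that a vector $v = \sum_i \lambda_i e_i$ lies in $\Rad(f)$ if and only if $\lambda_n = 0$ (so $v\in V_0$) — forced because $f(e_{i_1}\wedge\cdots\wedge e_{i_{k-1}}\wedge e_n) = f_0(e_{i_1}\wedge\cdots\wedge e_{i_{k-1}})$ cannot vanish for all choices, as $f_0$ is non-trivial — and then, for $v\in V_0$, that $v\in\Rad(f)$ if and only if $v\in\Rad(f_0)$, which is clear from the same displayed formula. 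But given Theorem~\ref{sympl hyperplane}, none of this needs repeating, and the proof reduces to the single invocation above.
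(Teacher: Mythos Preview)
Your proof is correct and takes essentially the same approach as the paper: both simply invoke clause (\ref{sym-1}) of Theorem~\ref{sympl hyperplane}, from which the corollary is immediate. Your additional remarks on the identification of $\PG(V_0)$ inside $\PG(V)$ and the optional direct verification via \eqref{expanded form} are fine but go beyond what the paper records.
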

Hence, according to Theorem~\ref{ext hyp th}, if $R_\downarrow(H_0) \neq \emptyset$ then $E(H_0)$ is a trivial extension. Explicitly, let $R_0$ be the subspace of $V_0$ corresponding to $R_\downarrow(H_0)$. Assume that $R_0 \neq 0$ and let $S_0$ be a complement of $R_0$ in $V_0$. Let $S$ be a complement of $R_0$ in $V$ containing $S_0$. In order to avoid annoying complications, assume that $\dim(S_0) > k-1$. It is easy to check that the hyperplane induced by $E(H_0)$ on $S$ is just the expansion $E(H_0(S))$ (from $S_0$ to $S$) of the hyperplane $H_0(S_0)$ induced by $H_0$ on $S_0$. Hence Theorem~\ref{ext hyp th} and claim (\ref{sym-1}) of Theorem~\ref{sympl hyperplane} yield the following:

\begin{corollary}\label{symp hyp cor2}
Under the previous assumptions, $E(H_0) = E(H_0(S_0))\odot R_\downarrow(H_0)$.
\end{corollary}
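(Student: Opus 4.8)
\textbf{Proof plan for Corollary~\ref{symp hyp cor2}.}

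The plan is to unwind the definitions and reduce everything to an application of Theorem~\ref{ext hyp th} together with claim~(\ref{sym-1}) of Theorem~\ref{sympl hyperplane}. Recall the setup: $R_0 < V_0$ corresponds to $R_\downarrow(H_0)$ and is assumed nonzero, $S_0$ is a complement of $R_0$ in $V_0$ with $\dim(S_0) > k-1$, and $S$ is a complement of $R_0$ in $V$ containing $S_0$. First I would note that $S$ is a hyperplane of $S_0$ is false in general — rather $S_0$ is a hyperplane of $S$, since $\dim(V_0) = n-1$, $\dim(S_0) = \dim(V_0) - \dim(R_0)$, $\dim(S) = n - \dim(R_0)$, so $\dim(S) - \dim(S_0) = 1$. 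Thus the expansion $E(H_0(S_0))$ from $S_0$ to $S$ makes sense, and by the hypothesis $\dim(S_0) > k-1$ the induced object $H_0(S_0) = H_0 \cap G_{k-1}(S_0)$ is genuinely a hyperplane of $\cG_{k-1}(S_0)$ (here one must check $G_{k-1}(S_0) \not\subseteq H_0$, which follows because otherwise $S_0 \subseteq R_\downarrow(H_0) = [R_0]$, contradicting $S_0 \cap R_0 = 0$ and $\dim(S_0) > 0$).

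The first substantive step is the claim already asserted in the text: \emph{the hyperplane induced by $E(H_0)$ on $S$ is the expansion $E(H_0(S_0))$}. To see this, I would compute $E(H_0)(S) = E(H_0) \cap G_k(S)$ directly from the definition of $E(H_0)$. A $k$-subspace $X \subseteq S$ lies in $E(H_0)$ iff either $X \subseteq V_0$ or $X \cap V_0 \in H_0$. Since $X \subseteq S$, we have $X \cap V_0 = X \cap S \cap V_0 = X \cap S_0$; and $X \subseteq V_0$ together with $X \subseteq S$ forces $X \subseteq S_0$. Hence the condition becomes: either $X \subseteq S_0$, or $X \cap S_0 \in H_0 \cap G_{k-1}(S_0) = H_0(S_0)$, which is exactly the defining condition for $X \in E(H_0(S_0))$ relative to the hyperplane $S_0$ of $S$. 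So $E(H_0)(S) = E(H_0(S_0))$.

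The second step is to invoke Theorem~\ref{ext hyp th}. By claim~(\ref{sym-1}) of Theorem~\ref{sympl hyperplane}, $R_\downarrow(E(H_0)) = R_\downarrow(H_0) = [R_0] \neq \emptyset$, so the hypothesis of Theorem~\ref{ext hyp th} is met with $H = E(H_0)$ and the radical subspace $R = R_0$. Theorem~\ref{ext hyp th} then gives $E(H_0) = E(H_0)(S) \odot R_\downarrow(E(H_0))$ for \emph{any} complement $S$ of $R_0$ in $V$; choosing the particular complement $S$ at hand and substituting $E(H_0)(S) = E(H_0(S_0))$ from the first step and $R_\downarrow(E(H_0)) = R_\downarrow(H_0)$ yields $E(H_0) = E(H_0(S_0)) \odot R_\downarrow(H_0)$, as desired. (One should double-check that Theorem~\ref{ext hyp th} applies with the complement being exactly $S$ and not just some abstract complement: its statement quantifies over an arbitrary complement $S$ of $R$, so there is no issue.)

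The only place demanding genuine care — and the main obstacle — is the first step: making sure the identification $E(H_0)(S) = E(H_0(S_0))$ is airtight, in particular that the two "expansion" operations are being taken with respect to the right ambient hyperplanes (here $S_0 \subset S$, there $V_0 \subset V$) and that the degenerate possibilities are correctly excluded by the standing assumption $\dim(S_0) > k-1$. Everything else is bookkeeping with Theorem~\ref{ext hyp th} and Theorem~\ref{sympl hyperplane}(\ref{sym-1}), both already available. Since the paper itself flags the proof as a routine consequence ("It is easy to check\ldots Hence Theorem~\ref{ext hyp th} and claim~(\ref{sym-1})\ldots yield the following"), I would keep the written argument short, spelling out only the computation $X \cap V_0 = X \cap S_0$ for $X \subseteq S$ and then citing the two results.
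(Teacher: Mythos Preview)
Your proposal is correct and follows exactly the route the paper indicates: verify that $E(H_0)(S) = E(H_0(S_0))$ via the identity $S\cap V_0 = S_0$, then apply Theorem~\ref{ext hyp th} together with claim~(\ref{sym-1}) of Theorem~\ref{sympl hyperplane}. One small imprecision: your justification that $H_0(S_0)$ is a genuine hyperplane (``otherwise $S_0 \subseteq R_\downarrow(H_0)$'') does not follow as stated, since $G_{k-1}(S_0)\subseteq H_0$ only constrains $(k-1)$-spaces \emph{inside} $S_0$; the clean argument is that the $(k-1)$-linear form defining $H_0$ restricts non-trivially to any complement of its radical, exactly as in the paragraph preceding Theorem~\ref{ext hyp th}.
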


In the above we have assumed that $\dim(S_0) > k-1$, namely $H_0$ is non-trivial. With some additional conventions, we can interpret the statement of Corollary~\ref{symp hyp cor2} so that it also holds when $H_0$ is trivial. However, there is no need to stress definitions this much. When $H_0$ is trivial we have the following simple statement.
\begin{corollary}\label{symp hyp cor3}
If $H_0$ is trivial then $E(H_0)$ is also trivial, with $R_\downarrow(H_0)$ as its center.
\end{corollary}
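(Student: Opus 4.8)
The plan is to unwind the definition of the expansion $E(H_0)$ and recognise it directly as a trivial hyperplane. Since $H_0$ is, by hypothesis, the trivial hyperplane of $\cG_{k-1}(V_0)$ centered at some subspace $R_0\leq V_0$, Proposition~\ref{ext hyp sing} applied inside $V_0$ tells me that $H_0=\{Y\in G_{k-1}(V_0)~\colon~ Y\cap R_0\neq 0\}$ and that $R_\downarrow(H_0)=[R_0]$; moreover Corollary~\ref{ext hyp cor2} gives $\cod_{V_0}(R_0)=k-1$, i.e. $\dim R_0=(n-1)-(k-1)=n-k$, and since $V_0$ has codimension $1$ in $V$ this yields $\cod_V(R_0)=k$. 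These are the data I want to feed into Corollary~\ref{ext hyp cor2} at the level of $V$.

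Next I would verify, for an arbitrary $X\in G_k(V)$, the equivalence $X\in E(H_0)\iff X\cap R_0\neq 0$, treating two cases. If $X\subseteq V_0$, then $X\in E(H_0)$ by definition of the expansion; on the other hand $\dim(X\cap R_0)\geq \dim X+\dim R_0-\dim V_0=k+(n-k)-(n-1)=1$, so $X\cap R_0\neq 0$ as well, and both sides of the equivalence hold. If $X\not\subseteq V_0$, then $X\cap V_0$ is a hyperplane of $X$, so $\dim(X\cap V_0)=k-1$, and $X\cap R_0=(X\cap V_0)\cap R_0$ because $R_0\subseteq V_0$; hence $X\in E(H_0)\iff X\cap V_0\in H_0\iff (X\cap V_0)\cap R_0\neq 0\iff X\cap R_0\neq 0$. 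Combining the two cases, $E(H_0)=\{X\in G_k(V)~\colon~ X\cap R_0\neq 0\}$, which by Proposition~\ref{ext hyp sing} (now read in $V$, with $\cod_V(R_0)=k$ ensuring this really is a hyperplane of the expected kind) is precisely the trivial hyperplane of $\cG_k(V)$ centered at $R_0$. Since $[R_0]=R_\downarrow(H_0)$, this is exactly the claimed statement.

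An even shorter route avoids the case analysis: claim (1) of Theorem~\ref{sympl hyperplane} gives $R_\downarrow(E(H_0))=R_\downarrow(H_0)=[R_0]$, and since $\cod_V([R_0])=k$ (computed as above), Corollary~\ref{ext hyp cor2} immediately shows $E(H_0)$ is trivial; its center is then $R_\downarrow(E(H_0))=R_\downarrow(H_0)$ by Proposition~\ref{ext hyp sing}. I would probably include the explicit computation of $E(H_0)$ anyway, since it makes the identification of the center transparent.

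There is no genuine obstacle: the statement is pure bookkeeping on top of the codimension characterisation of trivial hyperplanes already established in Corollary~\ref{ext hyp cor2}. The only two points that need a moment's attention are the passage between codimension in $V_0$ and codimension in $V$, and the identity $X\cap R_0=(X\cap V_0)\cap R_0$ for $X\not\subseteq V_0$ — it is precisely this identity that causes the expansion of a trivial hyperplane to collapse back to a trivial hyperplane rather than producing something larger.
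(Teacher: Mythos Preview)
Your proposal is correct, and your ``shorter route'' is exactly the paper's proof: use $R_\downarrow(E(H_0))=R_\downarrow(H_0)$ from Theorem~\ref{sympl hyperplane}(1), observe that the codimension jumps from $k-1$ in $V_0$ to $k$ in $V$, and invoke Corollary~\ref{ext hyp cor2} together with Proposition~\ref{ext hyp sing}. Your first route, the direct case-by-case verification that $E(H_0)=\{X\in G_k(V)\colon X\cap R_0\neq 0\}$, is a correct alternative that makes the identification of the center explicit without appealing to Corollary~\ref{ext hyp cor2}, at the cost of a couple of extra lines.
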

\begin{proof}
Let $H_0$ be trivial. Then $R_\downarrow(E(H_0))$ has codimension $k$ in $\PG(V)$, since $R_\downarrow(E(H_0)) = R_\downarrow(H_0)$ and the latter has codimension $k-1$ in $V_0$. The conclusion follows from Corollary~\ref{ext hyp cor2} and Proposition~\ref{ext hyp sing}.
\end{proof}

\subsubsection{Symplectic hyperplanes}\label{symp hyp sec}

Assume now $k = 3$. According to Proposition~\ref{Polar Space}, the point-line geometry ${\cS}(H_0) = (G_1(V_0), H_0)$ is a polar space of symplectic type. The upper and lower radical of $H_0$ are mutually equal and coincide with the radical $R(H_0)$ of ${\cS}(H_0)$.

Suppose firstly that $R(H_0) = \emptyset$, namely ${\cS}(H_0)$ is non-degenerate. Then $n-1$ is even, whence $n \geq 5$. Claims (\ref{sym-1}) and (\ref{sym-2}) of Theorem~\ref{sympl hyperplane} imply the following:

\begin{prop}\label{symp hyp prop}
$R_\downarrow(E(H_0)) = \emptyset$ and $R^\uparrow(E(H_0)) = H_0$.
\end{prop}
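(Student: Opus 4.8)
\textbf{Proof proposal for Proposition~\ref{symp hyp prop}.}

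The plan is to read both equalities straight off the relevant parts of Theorem~\ref{sympl hyperplane}, using the hypothesis $R(H_0)=\emptyset$ to kill the extra terms. Recall that here $k=3$, $V_0$ is a hyperplane of $V$, and $H_0$ is a hyperplane of $\cG_2(V_0)$, so that $\cS(H_0)=(G_1(V_0),H_0)$ is a non-degenerate symplectic polar space and its radical, which by the $k=2$ discussion in Section~\ref{problems} equals both $R_\downarrow(H_0)$ and $R^\uparrow(H_0)$, is empty.

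For the first equality, apply claim~(\ref{sym-1}) of Theorem~\ref{sympl hyperplane}: $R_\downarrow(E(H_0))=R_\downarrow(H_0)=R(H_0)=\emptyset$. (Equivalently one may invoke Corollary~\ref{symp hyp cor1}.) For the second, apply claim~(\ref{sym-2}): $R^\uparrow(E(H_0))=H_0\cup\{X\in G_2(V)\setminus G_2(V_0) : X\cap V_0\in R^\uparrow(H_0)\}$. Here I would note that $R^\uparrow(H_0)$, being the radical of $\cS(H_0)$, is a set of \emph{points} of $\PG(V_0)$, i.e. a subset of $G_1(V_0)$; since $R^\uparrow(H_0)=\emptyset$, no line $X\not\subseteq V_0$ can meet $V_0$ in a member of $R^\uparrow(H_0)$, so the second set in the union is empty and $R^\uparrow(E(H_0))=H_0$.

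There is essentially no obstacle: the content of the proposition is entirely carried by Theorem~\ref{sympl hyperplane}, and the only thing to check is the bookkeeping that $R(H_0)=R_\downarrow(H_0)=R^\uparrow(H_0)$ and that emptiness of the radical makes the correction terms in (\ref{sym-1}) and (\ref{sym-2}) vanish. The one point worth a sentence is why $n-1$ is even, hence $n\geq 5$: a non-degenerate symplectic polar space lives on an even-dimensional space, so $\dim V_0=n-1$ is even; this is only needed to record that the hypothesis $R(H_0)=\emptyset$ is not vacuous and is consistent with the standing assumptions. Thus the proof is a two-line deduction from the already-established Theorem~\ref{sympl hyperplane}.
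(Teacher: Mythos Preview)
Your proposal is correct and matches the paper's approach exactly: the paper simply states that the proposition follows from claims~(\ref{sym-1}) and~(\ref{sym-2}) of Theorem~\ref{sympl hyperplane}, and you have spelled out precisely the two-line deduction that this entails.
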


Let ${\cP}(E(H_0))$ be the geometry of poles of $E(H_0)$ (see Section~\ref{poles}). The next corollary immediately follows from Proposition~\ref{symp hyp prop}:

\begin{corollary}\label{symp hyp cor}
${\cP}(E(H_0)) = {\cS}(H_0)$.
\end{corollary}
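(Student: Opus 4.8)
The statement to prove is Corollary~\ref{symp hyp cor}: $\cP(E(H_0)) = \cS(H_0)$, under the standing assumptions $k=3$, $V_0$ a hyperplane of $V$, $H_0$ a hyperplane of $\cG_2(V_0)$ with $R(H_0) = \emptyset$ (so $\cS(H_0)$ is a non-degenerate symplectic polar space and $n$ is odd).

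\medskip

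The plan is to read off the identification directly from Proposition~\ref{symp hyp prop}, which has just established $R_\downarrow(E(H_0)) = \emptyset$ and $R^\uparrow(E(H_0)) = H_0$. Recall from Section~\ref{poles} that the geometry of poles $\cP(E(H_0)) = (P(E(H_0)), R^\uparrow(E(H_0)))$ has as its line set precisely $R^\uparrow(E(H_0))$, and as its point set $P(E(H_0))$ the set of all points of $\PG(V)$ lying on some line of $R^\uparrow(E(H_0))$; incidence is inherited from $\PG(V)$. On the other side, $\cS(H_0) = (G_1(V_0), H_0)$ has line set $H_0$ and point set $G_1(V_0)$, all the points of $\PG(V_0)$.

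\medskip

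The key step is therefore to match these two descriptions. The line sets agree at once: $R^\uparrow(E(H_0)) = H_0$ by Proposition~\ref{symp hyp prop}. For the point sets, I would argue that $P(E(H_0)) = G_1(V_0)$. The inclusion $\subseteq$ holds because every line $\ell \in H_0$ is, by construction, a line of $\PG(V_0)$, so every point incident with such a line lies in $G_1(V_0)$; thus $P(E(H_0)) \subseteq G_1(V_0)$. For the reverse inclusion, I would invoke that $\cS(H_0)$ is a non-degenerate polar space: since $R(H_0) = \emptyset$, every point $p \in G_1(V_0)$ lies on at least one line of $H_0$ (a point lying on no line of the polar space would be forced into its radical — or one can note that a symplectic polar space of rank $\geq 1$ has no isolated points), hence $p \in P(E(H_0))$. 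Finally, incidence in both geometries is just the incidence induced from the ambient projective space ($\PG(V)$ for $\cP(E(H_0))$, $\PG(V_0)$ for $\cS(H_0)$), and these agree on pairs $(p,\ell)$ with $p \in G_1(V_0)$ and $\ell \in H_0$ since $\ell \subseteq V_0$. This gives an equality of point-line geometries, not merely an isomorphism.

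\medskip

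There is no real obstacle here; the corollary is essentially a restatement of Proposition~\ref{symp hyp prop} once the definition of the geometry of poles is unwound. The only point requiring a word of justification is that every point of $\PG(V_0)$ actually occurs as a pole, i.e.\ lies on a member of $H_0$ — and this is exactly the non-degeneracy hypothesis $R(H_0) = \emptyset$, combined with the fact that a symplectic polar space (which $\cS(H_0)$ is, by Proposition~\ref{Polar Space}) has all of $\PG(V_0)$ as its point set and no point outside its radical is isolated. I would keep the proof to two or three sentences, citing Proposition~\ref{symp hyp prop} for the line set and the empty-radical hypothesis for the exhaustion of the point set.
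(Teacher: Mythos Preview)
Your proposal is correct and matches the paper's approach exactly: the paper simply says the corollary ``immediately follows from Proposition~\ref{symp hyp prop}'' and then remarks, as you do, that the points of $[V]\setminus[V_0]$ are smooth while those of $[V_0]$ are poles.

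One small slip worth fixing: your parenthetical justification that ``a point lying on no line of the polar space would be forced into its radical'' is backwards --- a point of the radical is collinear with \emph{every} point, not with none. Your alternative reasoning is the right one, but it needs rank $\geq 2$ rather than rank $\geq 1$ (a rank~$1$ polar space has no lines at all). Here $\dim V_0 = n-1 \geq 4$, so the non-degenerate symplectic polar space $\cS(H_0)$ has rank $(n-1)/2 \geq 2$, and then for any point $p$ the hyperplane $p^\perp$ of $\PG(V_0)$ properly contains $p$, giving a totally isotropic line through $p$.
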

In particular, the points of $[V]\setminus[V_0]$ are smooth while those of $[V_0]$ are poles (of degree $1$). Motivated by Corollary~\ref{symp hyp cor}, when $R(H_0) = \emptyset$ we call $E(H_0)$ a \emph{symplectic hyperplane}.

Assume now that $R(H_0) \neq \emptyset$. Then $R_\downarrow(E(H_0)) \neq \emptyset$, since $R_\downarrow(E(H_0)) = R(H_0)$ (by case~\ref{sym-1}) of Theorem~\ref{sympl hyperplane}). Corollaries~\ref{symp hyp cor2} and~\ref{symp hyp cor3} now imply that $E(H_0)$ is either a trivial extension
of a symplectic hyperplane by $R(H_0)$ (when $\rk(R(H_0)) < n-3$) or a trivial hyperplane centered at $R(H_0)$ (when $\rk(R(H_0)) =  n-3$).

\subsection{Low dimensional cases}\label{low dim}

In this subsection we survey what is known on hyperplanes of $\cG_3(V)$ when $V$ has dimension $n \leq 7$. When $n = 4$ the hyperplanes of $\cG_3(V)$ are just the hyperplanes of $\PG(V)$. The case $n = 5$ is dealt with in \S~\ref{n = 5} while \S\S~\ref{n = 6} and~\ref{n = 7} are respectively devoted to the cases $n = 6$ and $n = 7$.
 In \S~\ref{prel k-linear alt} we fix some notation for $k$-linear alternating forms, to be used in \S\S~\ref{n = 6} and~\ref{n = 7}.

In the sequel we shall refer to isomorphism classes of hyperplanes. Actually, we have already used the notion of isomorphism before (in Corollary~\ref{ext hyp cor1}, for instance) without referring to an explicit definition, but this  becomes now necessary. We say that two hyperplanes $H$ and $H'$ of $\cG_k(V)$ are \emph{isomorphic}, and we write $H \cong H'$, when $H' = g(H) := \{g(X)\}_{X\in H}$ for some $g\in\GL(V)$. Recall that two $k$-linear alternating forms $\varphi$ and $\varphi'$ on $V$ are said to be \emph{equivalent} when
\[\varphi'(x_1,\ldots, x_k) ~ = ~ \varphi(g(x_1),\ldots, g(x_k)), ~~~ \forall x_1,\ldots, x_k \in V\]
for some $g\in\GL(V)$. Accordingly, if $H$ and $H'$ are the hyperplanes associated to $\varphi$ and $\varphi'$, we have $H\cong H'$ if and only if $\varphi'$ is proportional to a form equivalent to $\varphi$. Note that if $\varphi' = \lambda\cdot \varphi$ for a scalar $\lambda \neq 0$ then $\varphi$ and $\varphi'$ are equivalent if and only if $\lambda$ is a $k$-th power in $\KK$.

We say that two forms $\varphi$ and $\varphi'$ are \emph{nearly equivalent}, and we write $\varphi\sim \varphi'$, when each of them is equivalent to a non-zero
scalar multiple of the other. With this notation, we can concisely rephrase the above definition as follows: $H\cong H'$ if and only if $\varphi\sim\varphi'$.

We extend the above terminology to linear functionals of $\bigwedge^kV$, saying that two linear functionals $f, f' \in (\bigwedge^kV)^*$ are \emph{nearly equivalent} and writing $f\sim f'$ when their corresponding $k$-alternating forms are nearly equivalent.

\subsubsection{Case $n = 5$}\label{n = 5}

\begin{theorem}\label{l5}
Let $n=5$. Then, up to isomorphism, only two hyperplanes exist in $\cG_3(V)$, namely the symplectic hyperplane and the trivial one.
\end{theorem}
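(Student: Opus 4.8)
The plan is to translate the problem into the language of alternating $3$-linear forms and then invoke the structural results proved so far, in particular Theorem~\ref{ext hyp th} on trivial extensions and the analysis of symplectic hyperplanes in \S~\ref{symp hyp sec}. A hyperplane $H$ of $\cG_3(V)$ corresponds to a non-trivial alternating $3$-linear form $\varphi$ on the $5$-dimensional space $V$, and by Proposition~\ref{codimension lower radical} the lower radical $R_\downarrow(H) = [\Rad(\varphi)]$ has codimension at least $3$ in $\PG(V)$; since $n = 5$ this means $\dim(\Rad(\varphi)) \leq 2$. I would split the argument according to the dimension of $\Rad(\varphi)$.

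First I would dispose of the degenerate cases. If $\dim(\Rad(\varphi)) = 2$, then $\cod(R_\downarrow(H)) = 3 = k$, so by Corollary~\ref{ext hyp cor2} the hyperplane $H$ is trivial, and by Corollary~\ref{ext hyp cor1} all trivial hyperplanes of $\cG_3(V)$ are isomorphic (they are the trivial extensions $H(S)\odot R_\downarrow(H)$ with $H(S) = \emptyset$, determined up to $\GL(V)$ by the choice of the $2$-dimensional center). This gives exactly one isomorphism class. If instead $\dim(\Rad(\varphi)) = 1$, then by Theorem~\ref{ext hyp th} we have $H = H(S)\odot R_\downarrow(H)$ where $S$ is a $4$-dimensional complement of $\Rad(\varphi)$ and $H(S)$ is a hyperplane of $\cG_3(S)$ with $R_\downarrow(H(S)) = \emptyset$; but $\dim(S) = 4$ and every alternating $3$-linear form on a $4$-dimensional space has a non-trivial radical (indeed $\bigwedge^3 S$ is $4$-dimensional and $\cG_3(S) = \cG_1(S^*) = \PG(S^*)$, so a hyperplane of $\cG_3(S)$ is a hyperplane of $\PG(S^*)$, corresponding to a point of $S$, which lies in the radical). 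This contradicts $R_\downarrow(H(S)) = \emptyset$, so the case $\dim(\Rad(\varphi)) = 1$ does not occur.

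It remains to treat the main case $\Rad(\varphi) = 0$, i.e. $\varphi$ non-degenerate. Here I would show directly that, up to equivalence, there is a unique such form and that it is the one defining the symplectic hyperplane $E(H_0)$, where $H_0$ is the (unique, non-degenerate) symplectic hyperplane of $\cG_2(V_0)$ for a hyperplane $V_0$ of $V$. The cleanest route: pick any non-zero vector $e_5 \in V$; since $\varphi$ is non-degenerate there exist $a, b \in V$ with $\varphi(e_5, a, b) \neq 0$, and rescaling we may assume the associated bilinear form $f_{H,\langle e_5\rangle}$ on $V/\langle e_5\rangle$ (notation as in \eqref{eD}) is a non-zero alternating form on a $4$-dimensional space; its radical has even codimension, hence has dimension $0$ or $2$. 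A short argument shows it must be non-degenerate (if it had a $2$-dimensional radical one recovers too large a singular subspace, forcing $\delta(H) = n-1$ and hence $R_\downarrow(H) \neq 0$, already excluded). Choosing a symplectic basis $e_1, e_2, e_3, e_4$ of $V/\langle e_5\rangle$ and lifting, one brings $\varphi$ into the normal form $e_1^*\wedge e_2^*\wedge e_5^* + e_3^*\wedge e_4^*\wedge e_5^*$ (in the notation of \eqref{expanded form} with $k = 3$); any leftover terms of $\varphi$ not involving $e_5^*$ live in $\bigwedge^3\langle e_1,\dots,e_4\rangle \cong \langle e_1,\dots,e_4\rangle^*$ and can be absorbed by a further change of the $e_5$-direction. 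Thus $\varphi$ is equivalent to the expansion of a non-degenerate symplectic form on a hyperplane, i.e. $H \cong E(H_0)$ is the symplectic hyperplane, giving the second isomorphism class. Finally, these two classes are genuinely distinct since $R_\downarrow$ is non-empty for the trivial hyperplane and empty for the symplectic one by Proposition~\ref{symp hyp prop}.

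\textbf{Main obstacle.} The routine but slightly delicate point is the normal-form reduction in the non-degenerate case: showing that after fixing a vector $e_5$ and a symplectic basis of $V/\langle e_5\rangle$, the residual part of $\varphi$ (the component in $\bigwedge^3\langle e_1,\dots,e_4\rangle$, equivalently a linear functional on the $4$-space) can always be cleared by translating $e_5$ by a suitable vector of $\langle e_1,\dots,e_4\rangle$ and rescaling — equivalently, that the $\GL(V)$-orbit of the model form is the whole set of non-degenerate forms. One can either do this by an explicit computation with the $\binom{5}{3} = 10$ coordinates, or more elegantly quote the uniqueness of the non-degenerate alternating $2$-form on a $4$-dimensional space together with the transitivity of the symplectic group on non-zero vectors; I would take the latter, more conceptual path to keep the argument short.
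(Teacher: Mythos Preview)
Your strategy is sound and the cases $\dim\Rad(\varphi)\in\{1,2\}$ are handled correctly, but the non-degenerate case contains two concrete errors. First, it is false that for an \emph{arbitrary} $e_5$ the form $f_{H,\langle e_5\rangle}$ is non-degenerate: for the model form $\underline{123}+\underline{145}$ the vector $e_2$ gives a form of rank~$2$, and your parenthetical justification does not work since a $2$-dimensional radical only yields $r(\langle e_5\rangle)=2$, not $n-1=4$. What is true (and suffices) is that \emph{some} $e_5$ works: since $\Rad(\varphi)=0$ the map $v\mapsto\iota_v\varphi$ embeds $V$ as a $5$-dimensional subspace of $\bigwedge^2V^*$, and no such subspace can consist entirely of rank-$2$ bivectors because the maximal linear subspaces of the Grassmann variety $\GG_2(V^*)$ have vector dimension at most~$4$. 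Second, translating $e_5$ by $w\in W=\langle e_1,\dots,e_4\rangle$ does \emph{not} affect the residual $\psi\in\bigwedge^3W^*$ (it only perturbs $\omega$ by $\iota_w\psi$). The correct absorption is dual: since $\omega$ is non-degenerate the Lefschetz-type map $\alpha\mapsto\omega\wedge\alpha$ from $W^*$ to $\bigwedge^3W^*$ is an isomorphism, whence $\psi=\omega\wedge\alpha$ and $\varphi=\omega\wedge(e^5+\alpha)$; now replace $e^5$ by $e^5+\alpha$, i.e.\ change the complement~$W$ rather than the vector $e_5$.

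The paper bypasses all of this with a one-line duality argument: a hyperplane of $\cG_3(V)$ is the same datum as a hyperplane of $\cG_2(V^*)$, hence by Proposition~\ref{Polar Space} is the line-set of a symplectic polar space in $\PG(V^*)$; since $\dim V^*=5$ the radical of that polar space has rank $1$ or~$3$, giving exactly two isomorphism classes. Your direct approach exercises the machinery of \S\S\ref{trivial}--\ref{symplectic hyperplane section} and makes the link with the symplectic hyperplane explicit, but the duality route is far shorter and needs no normal-form reduction at all.
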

\begin{proof}
This result can be drawn out of the classification of \cite{Revoy79}, but it can also be proved by the following elementary argument.

Trivial hyperplanes exist in any $k$-Grassmannian while symplectic hyperplanes exist in any $3$-Grassmannian $\cG_3(V)$ provided that $n = \dim(V)$ is odd and at least $5$.
As $n = 5$ by assumption, $\cG_3(V)$ admits both trivial and symplectic hyperplanes. In order to see that these two families form two isomorphism classes and no more hyperplanes exist in $\cG_3(V)$, we consider  $\cG_2(V^*)$. Any hyperplane $H$ of $\cG_3(V)$ appears also as a hyperplane $H^*$ of $\cG_2(V^*)$. According to Proposition~\ref{Polar Space}, $H^*$ is the line-set of a polar space ${\cS}(H^*)$ of symplectic type embedded in $\PG(V^*)$. As $\dim(V^*) = 5$, up to isomorphism, only two possibilities exist for ${\cS}(H^*)$, according as its radical has rank $1$ or $3$. Therefore, only two isomorphism classes of hyperplanes exist in $\cG_3(V)$.
\end{proof}

\begin{corollary}\label{l5 cor}
Let $n = 5$. Then $R^\uparrow(H) \neq \emptyset$ for every hyperplane $H$ of $\cG_3(V)$.
\end{corollary}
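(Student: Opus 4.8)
The plan is to use Theorem~\ref{l5}: since there are exactly two isomorphism classes of hyperplanes of $\cG_3(V)$ when $n=5$, namely the symplectic hyperplane $E(H_0)$ (with $H_0$ a non-degenerate symplectic hyperplane of $\cG_2(V_0)$, $V_0$ a hyperplane of $V$) and the trivial hyperplane, it suffices to verify $R^\uparrow(H)\neq\emptyset$ for each of these two cases. Both follow immediately from results already established in the excerpt.

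For the trivial hyperplane $H$ centered at a subspace $V_1$ of codimension $k=3$ (so $\dim V_1=2$), Proposition~\ref{ext hyp sing} gives $R^\uparrow(H)=\{X\in G_{k-1}(V)\colon X\cap V_1\neq 0\}=\{X\in G_2(V)\colon X\cap V_1\neq 0\}$, which is certainly non-empty (for instance $V_1$ itself is a $2$-space meeting $V_1$ non-trivially). Alternatively, $R_\downarrow(H)=[V_1]\neq\emptyset$, and a point of $R_\downarrow(H)$ together with any other point spans a line of $R^\uparrow(H)$.

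For the symplectic hyperplane $E(H_0)$ with $R(H_0)=\emptyset$, Proposition~\ref{symp hyp prop} states directly that $R^\uparrow(E(H_0))=H_0$, and $H_0$ is non-empty since it is a hyperplane of $\cG_2(V_0)$ (equivalently, the line-set of a non-degenerate symplectic polar space on a $4$-dimensional space, which certainly contains lines). So in both cases $R^\uparrow(H)\neq\emptyset$.

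Since there are no obstacles of substance here — the corollary is a direct bookkeeping consequence of the classification in Theorem~\ref{l5} combined with the explicit descriptions of $R^\uparrow$ for trivial hyperplanes (Proposition~\ref{ext hyp sing}) and symplectic hyperplanes (Proposition~\ref{symp hyp prop}) — the only thing to be careful about is to check that these two descriptions indeed cover, up to isomorphism, every hyperplane, which is exactly the content of Theorem~\ref{l5}, and that $R^\uparrow$ is invariant under isomorphism of hyperplanes (immediate from its definition, since $\GL(V)$ permutes the $(k-1)$-subspaces contained in members of $H$). This last point could be stated in a single sentence. The proof is therefore essentially a one-line deduction citing Theorem~\ref{l5}, Proposition~\ref{ext hyp sing} and Proposition~\ref{symp hyp prop}.
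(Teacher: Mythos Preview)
Your proof is correct and follows the same approach as the paper: reduce to the two isomorphism classes via Theorem~\ref{l5} and check each case. The paper's proof is just the one-line version of what you wrote, noting that $R^\uparrow(H)\neq\emptyset$ both when $H$ is trivial and when $H$ is symplectic.
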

\begin{proof}
Indeed, $R^\uparrow(H)\neq \emptyset$ both when $H$ is trivial and when $H$ is of symplectic type.
\end{proof}

Of course, far more of what we have put in Corollary~\ref{l5 cor} can be said. As remarked in the proof of Theorem~\ref{l5}, the radical $R(H^*)$ of the polar space ${\cS}(H^*)$ has rank either $1$ or $3$. When $\rk(R(H^*)) = 3$ then $H$ is trivial while if $\rk(R(H^*)) = 1$ then $H$ is symplectic. In the first case $R_\downarrow(H)$ is a line of $\PG(V)$ and
$R^\uparrow(H)$ is the set of lines of $\PG(V)$ that meet the line $R_\downarrow(H)$ non-trivially. $R_\downarrow(H)$ corresponds to $R(H^*)$.

Suppose $\rk(R(H^*)) = 1$; then $H$ is of symplectic type. Accordingly, the geometry ${\cP}(H)$ of poles of $H$ is isomorphic to the generalized quadrangle $\cW(3,\KK)$ associated to a non-degenerate alternating bilinear form of $V(4,\KK)$. This can also be seen in a more direct way as follows. In the case under consideration $R(H^*)$, being a point of $\PG(V^*)$, corresponds to a hyperplane $V_0$ of $V$. The corresponding hyperplane $[V_0]$ of $\PG(V)$ is the set of poles of $H$, namely the point-set of the generalized quadrangle ${\cP}(H)$. The latter is naturally isomorphic to the quotient ${\cS}(H^*)/R(H^*)$ ($\cong \cW(3,\KK)$).

As ${\cP}(H) \cong \cW(3,\KK)$ and $\dim(V_0) = 4$, for every point $p\in [V_0]$ the lines of ${\cP}(H)$ through $p$ (namely the members of $R^\uparrow(H)$ containing $p$) span a plane $S_p$ of $[V_0]$. Hence they form a line $\ell_{p,S_p}$ of $\cG_3(V)$. These are precisely the lines of the geometry ${\cal R}^\uparrow(H)$ induced by $\cG_2(V)$ on $R^\uparrow(H)$. It is now clear that ${\cal R}^\uparrow(H)$ is isomorphic to the dual of ${\cP}(H)$. In short, ${\cal R}^\uparrow(H)  \cong {\cal Q}(4,\KK)$, the generalized quadrangle associated to a non-singular quadratic form of $V(5,\KK)$.

\subsubsection{Notation for linear functionals of $\bigwedge^3V$}\label{prel k-linear alt}

Given a non-trivial linear functional $h\in(\bigwedge^3 V)^*$, let $\chi$ and $H_h$ be respectively the alternating $3$-linear form and the hyperplane of ${\cG}_3(V)$ associated to it. Recall that $R_{\downarrow}(H_h) = [\Rad(\chi)]$ (see Section~\ref{introduction}). We put $\rk(h) := \cod_V(\Rad(\chi))$ and we call this number the \emph{rank} of $h$. Clearly, functionals of different rank can never be nearly equivalent. By Proposition~\ref{codimension lower radical} we know that $\rk(h) \geq 3$.

\begin{prop}\label{rank 4}
$\rk(h) \neq 4$.
\end{prop}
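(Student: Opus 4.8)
The plan is to show that an alternating $3$-linear form $\chi$ on $V$ whose radical has codimension exactly $4$ cannot exist, by reducing to the $4$-dimensional quotient and using the well-known fact that $\bigwedge^3$ of a $4$-space is $1$-dimensional. First I would set $R := \Rad(\chi)$ and assume for contradiction that $\cod_V(R) = 4$. Choose a complement $S$ of $R$ in $V$, so $\dim(S) = 4$, and let $\chi_S$ be the restriction of $\chi$ to $S$. Since $R = \Rad(\chi)$, the form $\chi$ is the trivial extension of $\chi_S$ (in the sense of Subsection~\ref{trivial}, cf.\ Theorem~\ref{ext hyp th}), and $\Rad(\chi_S) = 0$; in particular $\chi_S$ is a non-trivial alternating $3$-linear form on the $4$-dimensional space $S$ with trivial radical.

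Next I would derive a contradiction from the existence of such a $\chi_S$. The key point is that $\dim(\bigwedge^3 S) = \binom{4}{3} = 4 = \dim(S^*)$, so the linear map $\widetilde{f}_S : \bigwedge^3 S \to S^*$ of the form~\eqref{tildef} (with $k = 3$, applied to $S$ in place of $V$) is a map between spaces of equal dimension. Here $f_S \in (\bigwedge^3 S)^*$ is the functional corresponding to $\chi_S$. Now $\Rad(\chi_S) = \ker(\widetilde{f}_S \circ \iota)$ where $\iota : S \to \bigwedge^3 S$... — more directly, $\Rad(\chi_S)$ being trivial means that for every nonzero $w \in S$ there exist $x, y \in S$ with $f_S(w \wedge x \wedge y) \neq 0$. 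On the other hand, since $\bigwedge^3 S$ is $1$-dimensional, the map $S \to \bigwedge^3 S$, $v \mapsto v \wedge x \wedge y$ (for fixed $x, y$) has at most $1$-dimensional image, and more relevantly, fixing a nonzero $w$, the bilinear form $(x,y) \mapsto f_S(w\wedge x\wedge y)$ on $S$ is alternating and its radical contains $w$ and the whole $2$-dimensional space of vectors $x$ with $w \wedge x \wedge y = 0$ for all $y$... I would instead argue via the contraction/hyperplane picture: the functional $f_S$ on the $1$-dimensional space $\bigwedge^3 S$ is, up to scalar, evaluation at a chosen generator, hence $\chi_S(a,b,c)$ is (up to scalar) the coefficient expressing $a\wedge b\wedge c$ in terms of a fixed basis vector of $\bigwedge^3 S$. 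Pick a basis $e_1, e_2, e_3, e_4$ of $S$ with $e_1 \wedge e_2 \wedge e_3$ generating $\bigwedge^3 S$; then after rescaling $\chi_S(e_{i}, e_j, e_k)$ is $\pm 1$ when $\{i,j,k\} = \{1,2,3\}$ and involves $e_4$ otherwise. Computing $\chi_S(e_4, x, y)$ for all $x, y$: writing $e_4 \wedge e_i \wedge e_j$ in the chosen basis of $\bigwedge^3 S$ forces it to be a scalar multiple of $e_1\wedge e_2\wedge e_3$ only when... in fact $e_4 \wedge e_i \wedge e_j$ is never a nonzero multiple of $e_1\wedge e_2\wedge e_3$ unless $\{4,i,j\}=\{1,2,3\}$, which is impossible. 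So $\chi_S(e_4, x, y) = 0$ for all $x, y \in S$, meaning $e_4 \in \Rad(\chi_S)$, contradicting $\Rad(\chi_S) = 0$.

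Cleanly stated: every alternating $3$-linear form on a $4$-dimensional space has nontrivial radical, because the space $\bigwedge^3 S$ being $1$-dimensional means the form, up to scalar, equals $(x,y,z)\mapsto \det_{e_1,e_2,e_3}$ of the projection along $e_4$ onto $\langle e_1,e_2,e_3\rangle$ for a suitable basis, whence the basis vector it kills lies in the radical; alternatively, the associated linear map $S \to \bigwedge^2 S^*$, $w \mapsto \chi_S(w,\cdot,\cdot)$ cannot be injective because its image consists of decomposable-dual bilinear forms of rank at most $2$, and the rank-$\leq 2$ alternating forms on a $4$-space do not span $\bigwedge^2 S^*$ (which is $6$-dimensional) but the argument via $\dim\bigwedge^3 S = 1$ is cleanest. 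Either way $\Rad(\chi_S)\neq 0$, contradicting $\cod_V(\Rad(\chi)) = 4$, and we conclude $\rk(h)\neq 4$.

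The main obstacle is making the $4$-dimensional computation airtight without hand-waving: the claim ``every alternating $3$-linear form on a $4$-space is degenerate'' needs the clean observation that $\dim\bigwedge^3 S = 1$ so that, identifying $\bigwedge^3 S \cong \KK$, the form is $\chi_S = f_S \circ (\text{wedge})$ with $f_S$ a single scalar evaluation, and then the contraction $w \mapsto \iota_w \chi_S \in \bigwedge^2 S^*$ must have a nonzero kernel for dimension reasons once one notes its image lands in the variety of decomposable $2$-forms; I would actually prefer to bypass this by the basis argument above, which is completely elementary. I should also remember to record at the start why a complement $S$ inherits a non-trivial form with trivial radical — this is exactly the content of Theorem~\ref{ext hyp th} and the discussion preceding it, so I can cite it rather than reprove it. With that, the proof is short.
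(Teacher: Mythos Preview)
Your overall strategy matches the paper's exactly: assume $\cod_V(\Rad(\chi))=4$, pass to a $4$-dimensional complement $S$, and derive a contradiction from the fact that the restricted form $\chi_S$ would have to be non-trivial with trivial radical. The paper phrases this last step geometrically: since $\dim V_0=4$, the geometry $\cG_3(V_0)$ is the dual projective $3$-space, so every hyperplane of it is of the form $(p)G_3$ for a point $p\in\PG(V_0)$, and then $R_\downarrow(H_0)=\{p\}\neq\emptyset$.

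However, your execution of the key step contains a genuine error. You repeatedly assert that $\bigwedge^3 S$ is $1$-dimensional when $\dim S=4$; in fact $\dim\bigwedge^3 S=\binom{4}{3}=4$, as you yourself correctly compute a few lines earlier. Consequently, the claim that ``$e_1\wedge e_2\wedge e_3$ generates $\bigwedge^3 S$'' is false, and the basis argument as written does not stand: a generic $f_S\in(\bigwedge^3 S)^*$ is not simply evaluation at the $e_1\wedge e_2\wedge e_3$-coordinate. Your alternative suggestion, that the map $S\to\bigwedge^2 S^*$ is non-injective because rank-$\leq 2$ alternating forms fail to span $\bigwedge^2 S^*$, is also incorrect (decomposable $2$-forms always span $\bigwedge^2 S^*$).

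The argument is easily repaired. The correct reason one may choose a basis $e_1,\dots,e_4$ of $S$ so that $f_S$ is dual to $e_1\wedge e_2\wedge e_3$ is that every nonzero element of $\bigwedge^{n-1}W$ is decomposable for any $n$-dimensional $W$; here $n=4$, so any nonzero $f_S\in(\bigwedge^3 S)^*\cong\bigwedge^3 S^*$ has the form $\eta_1\wedge\eta_2\wedge\eta_3$ for independent $\eta_i\in S^*$. Taking $e_1,e_2,e_3$ dual to the $\eta_i$ and $e_4$ spanning $\bigcap_i\ker\eta_i$, one gets $\chi_S(e_4,x,y)=f_S(e_4\wedge x\wedge y)=0$ for all $x,y$, hence $e_4\in\Rad(\chi_S)$. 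With this correction your proof is complete and essentially equivalent to the paper's.
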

\begin{proof} By way of contradiction, suppose $\rk(h) = 4$. Then, by Theorem~\ref{ext hyp th},
  the hyperplane $H_h$ is a trivial extension $H = H_0\odot \Rad(\chi)$ for a complement $V_0$ of $\Rad(\chi)$ in $V$ and a suitable hyperplane $H_0 \in \cG_3(V_0)$ with $R_\downarrow(H_0) = \emptyset$. However, $\dim(V_0) = \rk(h) = 4$. Hence $H_0 = (p)G_3$ for a point $p\in \PG(V_0)$. Consequently, $R_\downarrow(H_0) = \{p\}$, while we said above that $R_\downarrow(H_0) = \emptyset$.
\end{proof}

Fix a basis $E:=(e_i)_{i=1}^n$ of $V$. The dual basis of $E$ in $V^*$ is  $E^* := (e^i)_{i=1}^n$, where
$e^i\in V^*$ is the linear functional such that $e^i(e_j) = \delta_{i,j}$ (Kronecker symbol).
The set $(e^{i}\wedge e^{j}\wedge e^{k})_{1\leq i<j<k\leq n}$ is the basis of $(\bigwedge^3V)^*$ dual of the basis $(e_i\wedge e_j\wedge e_k)_{1\leq i < j < k\leq n}$ of $\bigwedge^3V$ canonically associated to $E$. We shall adopt the convention of writing $\underline{ijk}$ for $e^i\wedge e^j\wedge e^k$, thus representing linear functionals of $\bigwedge^3V$ as linear combinations of symbols as $\underline{ijk}$.

In Table~\ref{Tab F} we list a number of possible types of linear functionals of $\bigwedge^3V$ of rank at most $7$, called $T_1,\ldots, T_9$ and $T^{(1)}_{10,\lambda}$, $T^{(2)}_{10,\lambda}$, $T^{(1)}_{11,\lambda}$ and $T^{(2)}_{11,\lambda}$, where $\lambda$ is a scalar subject to the conditions specified in the table.
Note that  description of each of these types makes sense for any $n$, provided that $n$ is not smaller than the rank of (a linear functional admitting) that description. Also, according to the clauses assumed on $\lambda$, types $T^{(r)}_{s,\lambda}$ ($r= \{1,2\}$, $s = \{10,11\}$) are considered only when $\KK$ is not quadratically closed. Moreover, when $\lambda \neq \lambda'$ the types $T^{(r)}_{s,\lambda}$ and $T^{(r)}_{s,\lambda'}$ are regarded as different in principle, even if it turned out that they describe nearly equivalent forms.

\begin{table}
\begin{small}
\[ \begin{array}[t]{c|l|c|l}
     \text{Type} & \text{Description} & \text{Rank} & \text{Special conditions, if any} \\ \hline
{}& & & \\
     T_1 & \underbar{123} & 3 & \\
     T_2 & \underbar{123}+\underbar{145} & 5 & \\
     T_3 & \underbar{123}+\underbar{456} & 6 & \\
     T_4 & \underbar{162}+\underbar{243}+\underbar{135} & 6 &  \\
     T_5 & \underbar{123}+\underbar{456}+\underbar{147} & 7 & \\
     T_6 & \underbar{152}+\underbar{174}+\underbar{163}+\underbar{243} & 7 & \\
    T_7 & \underbar{146}+\underbar{157}+\underbar{245}+\underbar{367} & 7 & \\
    T_8 & \underbar{123}+\underbar{145}+\underbar{167} & 7 & \\
     T_9 & \underbar{123}+\underbar{456}+\underbar{147}+\underbar{257}+\underbar{367} & 7 & \\
 {} & & & \\
     T_{10,\lambda}^{(1)} & \underbar{123}+\lambda(\underbar{156}+\underbar{345}+\underbar{426}) & 6 & p_\lambda(t):=t^2-\lambda \text{ irreducible in } \KK[t].\\
{} & & & \\
T_{10,\lambda}^{(2)} &\begin{array}[t]{@{}l}\underbar{126}+\underbar{153}+\underbar{234}+\\
(\lambda^2+1)\underbar{456}+\lambda(\underbar{156}+\underbar{345}+\underbar{426}) \end{array} & 6 & \begin{array}[t]{@{}l} \chr(\KK)=2 \text{ and}\\
                     p_{\lambda}(t):=t^2+\lambda t+1 \text{ irreducible in } \KK[t].\\
\end{array} \\
{}& & & \\
      T_{11,\lambda}^{(1)} & [\text{the same as at} ~ T^{(1)}_{10,\lambda}] +\underbar{147} & 7 & \text{same conditions as for} ~ T_{10,\lambda}^{(1)}\\
      T_{11,\lambda}^{(2)} & [\text{the same as at} ~ T^{(2)}_{10,\lambda}]+\underbar{147} & 7 & \text{same conditions as for} ~ T_{10,\lambda}^{(2)}\\
\end{array}\qquad
\]
\end{small}
\caption{types for linear functionals of $\bigwedge^3 V$.}
\label{Tab F}
\end{table}

If $T$ is one of the types of Table~\ref{Tab F}, we say that $h\in (\bigwedge^3V)^*$ is of \emph{type} $T$ if $h$ is nearly equivalent to the linear functional described at row $T$ of Table~\ref{Tab F}. The \emph{type} of $H_h$ is the type of $h$. Clearly, functionals of the same type are nearly equivalent. It follows from Revoy \cite{Revoy79} and Cohen and Helminck \cite{CH88} that two functionals of types $T_i$ and $T_j$ with $1\leq i < j \leq 9$ are never nearly equivalent; a functional of type $T_i$ with $i \leq 9$ is never nearly equivalent to a functional of type $T^{(r)}_{s,\lambda}$; two functionals of type $T^{(r)}_{s,\lambda}$ and $T^{(r')}_{s',\lambda'}$ with $(r,s) \neq (r', s')$ are never nearly equivalent while two functionals of type $T^{(r)}_{s,\lambda}$ and $T^{(r)}_{s,\lambda'}$ are nearly equivalent if and only if, denoted by $\mu$ and $\mu'$ a root of $p_\lambda(t)$ and $p_{\lambda'}(t)$ respectively in the algebraic closure of $\KK$, we have $\KK(\mu) = \KK(\mu')$ (see the fourth column of Table~\ref{Tab F} for the definition of $p_{\lambda}(t)$).

\subsubsection{Case $n = 6$}\label{n = 6}

Revoy~\cite{Revoy79} has classified equivalence classes of 6-dimensional trivectors. In view of his classification, when $\dim(V) = 6$ every non-trivial linear functional $h \in (\bigwedge^3V)^*$ belongs to one of the types $T_1, T_2, T_3, T_4, T^{(1)}_{10,\lambda}$ or $T^{(2)}_{10,\lambda}$ of Table~\ref{Tab F}; we remark that the latter two types
might comprise several inequivalent cases. Furthermore, in these two cases, with $p_\lambda(t)$ as in Table~\ref{Tab F}, let $\mu$ be a root of $p_\lambda(t)$ in the algebraic closure of $\KK$, let $\KK' := \KK(\mu)$ be the quadratic extension of $\KK$ by $\mu$ and $V' := \KK'\otimes_\KK V$. If $h$ has type $T^{(1)}_{10,\lambda}$ then, regarded as a linear functional of $\bigwedge^3V'$, it has type $T_3$ or $T_4$ according to whether $\chr(\KK) \neq 2$ or $\chr(\KK) = 2$. If $h$ has type $T^{(2)}_{10,\lambda}$ (whence $\chr(\KK) = 2$) then it has type $T_3$ when regarded as a linear functional of $\bigwedge^3V'$. In other words, if we replace $V$ with $V'$ then $T^{(r)}_{10,\lambda}$ disappears, absorbed by $T_3$ or $T_4$, although $\bigwedge^3V'$ can still admit linear functionals of type $T^{(r)}_{10,\lambda'}$, but with $\lambda'\in \KK'$ necessarily different from the scalar $\lambda$ previously chosen in $\KK$.

The next theorem is a recapitulation of the above, with some additional information on $H_h$. In the first two cases, the hyperplane $H_h$ is easy to describe: it is either trivial or a trivial extension of a symplectic hyperplane. In each of the remaining three cases, being currently unable to offer a nice geometric description of $H$, we only provide a description of the upper radical.

\begin{theorem}\label{case n=6}
With $\dim(V)=6$, let $h$ be a non-trivial linear functional of $\bigwedge^3V$, let $\chi$ be the alternating $3$-linear form associated to $h$ and $H := H_h$
be the hyperplane of ${\cG}_3(V)$ defined by $h$. Then one of the following occurs.
\begin{enumerate}
\item
  $h$ has type $T_1$ (rank $3$).  In this case $H$ is the trivial hyperplane centered at $\Rad(\chi)$. Its upper radical is the set of $2$-subspaces of $V$ that meet $\Rad(\chi)$ non-trivially.
\item
  $h$ has type $T_2$ (rank $5$), namely $\Rad(\chi)$ is $1$-dimensional. In this case $H$ is a trivial extension $H = E(H_0)\odot\Rad(\chi)$ of a symplectic hyperplane $E(H_0$), constructed in a complement $V'$ of $\Rad(\chi)$ in $V$ starting from the line-set $H_0$ of a symplectic generalized quadrangle living in a hyperplane of $V'$. The elements of $R^\uparrow(H)$ are the lines of $\PG(V)$ that either belong to $H_0$ or pass through the point $[\Rad(\chi)] = R_\downarrow(H)$.
\item\label{case n=6-3}
  $h$ has type $T_3$ (rank $6$).  Then $R^{\uparrow}(H) = \{\langle x, y\rangle \colon x\in V_1\setminus\{0\}, ~ y\in V_2\setminus\{0\}\}$ for a suitable decomposition $V = V_1\oplus V_2$ with $\dim(V_1) = \dim(V_2) = 3$.
\item
  $h$ has type $T_4$ (rank $6$). Then $R^{\uparrow}(H) = \{\langle x+y, \omega(x)\rangle \colon x\in V_1\setminus\{0\}, y\in V_2\}$ for a  decomposition $V = V_1\oplus V_2$ with $\dim(V_1) = \dim(V_2) = 3$ and $\omega$
  an isomorphism from $V_1$ to $V_2$.
\item
 $h$ has type $T^{(1)}_{10,\lambda}$ or $T^{(2)}_{10,\lambda}$ (rank $6$). Then $R^\uparrow(H)$ is a Desarguesian line spread of $\PG(V)$.
\end{enumerate}
\end{theorem}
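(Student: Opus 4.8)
The plan is to run through Revoy's list of types one at a time, using the structure theory of Sections~\ref{trivial} and~\ref{symplectic hyperplane section} for the ``degenerate'' types and explicit computation for the rank-$6$ ones. That every non-trivial $h\in(\bigwedge^3V)^*$ has one of the types $T_1,T_2,T_3,T_4,T^{(1)}_{10,\lambda},T^{(2)}_{10,\lambda}$ is precisely the content of Revoy~\cite{Revoy79}, already recalled in the paragraph preceding the statement; likewise the fact that these types are pairwise non-nearly-equivalent (so that no two cases are conflated) is drawn from \cite{Revoy79} and \cite{CH88} and summarised around Table~\ref{Tab F}. Hence it remains only to determine $R^\uparrow(H)$ type by type.

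Types $T_1$ and $T_2$ follow from results already proved. If $h$ has type $T_1$ then $\rk(h)=3$, so $R_\downarrow(H)=[\Rad(\chi)]$ has codimension $3$ in $\PG(V)$; by Corollary~\ref{ext hyp cor2} the hyperplane $H$ is trivial, centred at $\Rad(\chi)$, and Proposition~\ref{ext hyp sing} yields the stated form of $R^\uparrow(H)$. If $h$ has type $T_2$ then $\rk(h)=5$, so $R_\downarrow(H)=[\Rad(\chi)]$ is a single point; by Corollary~\ref{ext hyp cor2}, $H$ is not trivial, so by Theorem~\ref{ext hyp th} we may write $H=H(S)\odot R_\downarrow(H)$ for a complement $S$ of $\Rad(\chi)$ with $\dim(S)=5$ and $R_\downarrow(H(S))=\emptyset$. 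By Theorem~\ref{l5} the hyperplane $H(S)$ of $\cG_3(S)$ is either trivial --- impossible, since its lower radical is empty --- or symplectic, $H(S)=E(H_0)$ with $H_0$ the line-set of a symplectic quadrangle in a hyperplane of $S$. Then $R^\uparrow(H(S))=H_0$ by Proposition~\ref{symp hyp prop}, and claim~(2) of Corollary~\ref{ext hyp cor} gives the stated description of $R^\uparrow(H)$.

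For the rank-$6$ types I would compute $R^\uparrow(H)$ directly in Pl\"ucker coordinates. Fix the basis realising the normal form of Table~\ref{Tab F}, put $V_1=\langle e_1,e_2,e_3\rangle$, $V_2=\langle e_4,e_5,e_6\rangle$, and for a line $\ell=\langle u,v\rangle$ write $p_{ij}=u_iv_j-u_jv_i$. Since $\ell\in R^\uparrow(H)$ if and only if the trilinear map $w\mapsto\chi(u,v,w)$ vanishes identically, this amounts to the vanishing of the six coefficients of $w_1,\dots,w_6$, each of which is read off the normal form as a $\KK$-linear combination of the $p_{ij}$. For $T_3$ the system is $p_{12}=p_{13}=p_{23}=0$, $p_{45}=p_{46}=p_{56}=0$, forcing $\ell\cap V_1$ and $\ell\cap V_2$ to be points and $\ell=(\ell\cap V_1)\oplus(\ell\cap V_2)$, which is claim~(3); for $T_4$ the system reduces to $p_{12}=p_{13}=p_{23}=0$ together with three linear relations which, after a short analysis, put $\ell$ in the form $\langle x+y,\omega(x)\rangle$ with $x\in V_1\setminus\{0\}$, $y\in V_2$ and $\omega\colon V_1\to V_2$ the coordinatewise isomorphism, which is claim~(4). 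For $T^{(1)}_{10,\lambda}$ and $T^{(2)}_{10,\lambda}$ (where $\KK$ is not quadratically closed) the cleanest route is to base-change to the quadratic extension $\KK'=\KK(\mu)$: by the classification recalled before the statement, the extended functional acquires type $T_3$, or type $T_4$ in characteristic $2$, so, using \eqref{aG} and $K(H)\otimes\KK'=K(H')$ for the extended hyperplane $H'$, the variety $\RR^\uparrow(H')$ is one of those just found, attached to a decomposition $V'=V_1'\oplus V_2'$. The Galois involution $\sigma$ of $\KK'/\KK$ stabilises $R^\uparrow(H')$ but cannot fix this decomposition, for otherwise $\chi$ would be nearly equivalent over $\KK$ to a form of type $T_3$ or $T_4$, against Table~\ref{Tab F}; tracking how $\sigma$ permutes the rulings, one finds that the $\KK$-rational members of $R^\uparrow(H')$ --- i.e.\ the elements of $R^\uparrow(H)$ --- are exactly the $\KK$-lines underlying the points of a $3$-dimensional $\KK'$-structure on $V$, that is, a Desarguesian line spread of $\PG(V)$. (Alternatively, $R^\uparrow(H)$ can be computed outright from the normal form of $T^{(r)}_{10,\lambda}$ in Pl\"ucker coordinates and the spread recognised by hand.)

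The main obstacle is the last case. The bookkeeping with $\sigma$ is delicate: one must determine whether $\sigma$ interchanges or fixes each ruling of $\RR^\uparrow(H')$, treat separately the characteristic-$2$ subcase where the extended form has type $T_4$ rather than $T_3$, and verify both that every line of the putative spread arises from a $\sigma$-stable member of $R^\uparrow(H')$ and that distinct such members give distinct lines, so that $R^\uparrow(H)$ really is a spread (equivalently, that $\delta(H)=1$). By contrast, the cases $T_1,T_2$ are immediate from Sections~\ref{trivial}--\ref{symplectic hyperplane section}, and $T_3,T_4$ only require a patient but routine Pl\"ucker computation.
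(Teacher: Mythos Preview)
Your treatment of $T_1$--$T_4$ follows the paper's proof essentially verbatim: the paper invokes Corollary~\ref{ext hyp cor2}, Proposition~\ref{ext hyp sing}, Theorem~\ref{ext hyp th}, and Theorem~\ref{l5} for the degenerate types just as you do, and for the rank-$6$ types it reduces to computing $\ker(\tilde h)$ and intersecting with $\GG_2(V)$ via \eqref{aG}, which is exactly your Pl\"ucker-coordinate calculation (the paper defers the actual computations to \cite{IL16}).

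For $T^{(r)}_{10,\lambda}$ your Galois-descent argument is genuinely different from the paper's direct computation, and it is more conceptual: it explains \emph{why} the spread is Desarguesian, namely because it comes from the $\KK'$-structure on $V$ inherited from the $\KK$-linear identification $V\cong V_1'$ once $\sigma$ is shown to swap $V_1'$ and $V_2'$. However, there is a real gap. For $T^{(1)}_{10,\lambda}$ in characteristic $2$ the polynomial $p_\lambda(t)=t^2-\lambda$ is inseparable, so $\KK'/\KK$ is a purely inseparable extension and there is no Galois involution $\sigma$ at all; your descent mechanism simply does not exist in that case. This case does occur: Theorem~\ref{case n=6} places no perfectness hypothesis on $\KK$, and over an imperfect field of characteristic $2$ non-squares exist. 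Your fallback---direct computation from the normal form---is exactly what the paper does and is genuinely needed here. The separable cases, namely $T^{(1)}_{10,\lambda}$ in characteristic $\neq 2$ and $T^{(2)}_{10,\lambda}$ (whose defining polynomial $t^2+\lambda t+1$ is always separable), are handled correctly by your descent once you verify that $\sigma$ must swap the summands of the $T_3$-decomposition over $\KK'$.
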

\begin{proof}
In view of \cite{Revoy79}, only the claims on $H$ need to be proved. When $h$ has rank $3$ then $H$ is a trivial hyperplane by Corollary~\ref{ext hyp cor2} (recall that $R_\downarrow(H) = [\Rad(\chi)]$). Proposition~\ref{ext hyp sing} yields a description of its upper radical. Let $h$ have rank $5$.  Then, by Theorem~\ref{ext hyp th} we have $H = H'\odot \Rad(\chi)$ for a hyperplane $H'$ of ${\cG}_3(V')$ and a suitable complement $V'$ of $\Rad(\chi)$ in $V$. Moreover $R_\downarrow(H') = \emptyset$. Comparing
with the results of Theorem~\ref{l5} we see that $H'$ is a symplectic hyperplane, namely $H' = E(H_0)$ with $H_0$ as claimed. The description of $R^\uparrow(H)$ follows by claim (\ref{ext-2}) of Corollary~\ref{ext hyp cor} and what we know of $H'$.

In the remaining cases $h$ has full rank, namely $R_\downarrow(H) = \emptyset$. The information on $R^\uparrow(H)$ can be obtained by computing the kernel of $\tilde{h}$, where $\tilde{h}$ is defined as  \eqref{tildef} of Subsection~\ref{algebraic approach}. Indeed $R^\uparrow(H) = \varepsilon_2^{-1}(\ker(\tilde{h}) \cap \GG_2(V))$ by \eqref{aG} of Subsection~\ref{algebraic approach}. We refer the reader to \cite{IL16} for the details of those computations and more information on $H$.
\end{proof}

\subsubsection{Case $n = 7$ with $\KK$ a perfect field of cohomological dimension at most $1$}\label{n = 7}

Cohen and Helminck \cite{CH88} have classified linear functionals of $\bigwedge^3V$ for $n=7$ under the hypothesis that $\KK$ belongs to ${\bf Cd}\{0,1\}$ and is a perfect field, proving that any such linear functional, if non-trivial, belongs to one of types listed in Table~\ref{Tab F}
or is a scalar multiple of $T_9$.  Of course, as $\KK$ is perfect by assumption, types $T^{(1)}_{10,\lambda}$ and $T^{(1)}_{11,\lambda}$ now exist independently only when $\chr(\KK) \neq 2$.

Throughout this subsection $n = 7$. As in the previous two subsections, $h$ is a non-trivial linear functional of $\bigwedge^3V$ and $H := H_h$ is the hyperplane of $\cG_3(V)$ defined by $h$. For the moment we do not make any assumption on $\KK$, although the hypotheses of Theorem~\ref{tspr} are tailored on the classification of Cohen and Helminck \cite{CH88}. We will turn back to the hypotheses of \cite{CH88} in Corollary~\ref{n=7 conclusione1}.

Suppose firstly that $\rk(h) < 7$. Then by Corollary~\ref{ext hyp cor2} and Theorem~\ref{ext hyp th}, either $H$ is a trivial hyperplane or it is a trivial extension of a hyperplane $H_0$ of $\cG_3(V_0)$ with $R_\downarrow(H_0) = \emptyset$, for a subspace $V_0$ of $V$ of either dimension $5$ or $6$ (see also Proposition~\ref{rank 4}). All we might wish to know on $R^\uparrow(H)$ can be obtained either from Proposition~\ref{ext hyp sing} (when $H$ is trivial) or by Corollary~\ref{ext hyp cor} and the information previously achieved on hyperplanes of $\cG_3(V_0)$ (Theorems~\ref{l5} and~\ref{case n=6}).

The case $\rk(h) = 7$ is discussed in Theorem~\ref{tspr}, where  we collect some information on the geometry ${\cP}(H)$ of the poles of $H$ (see \S~\ref{poles}) under the additional assumption that $h$ belongs to one of the types of rank $7$ of Table~\ref{Tab F}, but with $\chr(\KK) \neq 2$ in case $T^{(1)}_{11,\lambda}$ (in accordance with \cite{CH88}). We recall that the lines of ${\cP}(H)$ are the elements of $R^\uparrow(H)$ and the points of ${\cP}(H)$, called poles of $H$, are just the points of $\PG(V)$ that belong to elements of $R^\uparrow(H)$. Note that, as we assumed that $h$ has full rank, $2$ and $4$ are the only possible values for the degree of a pole of $H$. As in \S~\ref{poles}, we denote by $P(H)$ the set of poles of $H$.

According to Theorem~\ref{tspr}, when $h$ has type $T_9$ the geometry ${\cP}(H)$ is a split Cayley hexagon. We refer the reader to Van Maldeghem~\cite{VM98} for a definition and the
properties of this family of generalized hexagons.

We fix some terminology and conventions which will be exploited in the statement of Theorem~\ref{tspr}.  We say that $h$ is \emph{in canonical form} if it admits a description given in Table~\ref{Tab F} with respect to the basis $E = (e_i)_{i=1}^7$ chosen for $V$. Clearly, there is no loss of generality in assuming that $h$ is in canonical form, but we will make this assumption only when necessary. For two vectors $x = \sum_{i=1}^7e_ix_i$ and $y = \sum_{i=1}^7e_iy_i$ of $V$ and $1\leq i < j \leq 7$ we put $|x, y|_{i,j} ~ := ~ x_iy_j - x_jy_i$, namely $|x,y|_{i,j}$ is the $(i,j)$-Pl\"{u}cker coordinate of $x\wedge y$ with respect to the basis $(e_i\wedge e_j)_{1\leq i< j\leq 7}$ of $V$ associated to $E$.

\begin{theorem}\label{tspr}
With $\dim(V)= 7$, let $h$ be a linear functional of $\bigwedge^3V$ belonging to one of the types of rank $7$ in Table~\ref{Tab F}, provided that $\chr(\KK) \neq 2$ for type $T^{(1)}_{11,\lambda}$. Then the following propositions hold on $H := H_h$, according to the type of $h$.
\begin{enumerate}[1)]
\item
  $h$ has type $T_5$. Two non-degenerate symplectic polar spaces ${\cS}_1$ and ${\cS}_2$ are given, with distinct hyperplanes $S_1$ and $S_2$ of $\PG(V)$ as their point-sets and such that they induce the same polar space ${\cS}_0$ on $S_0 := S_1\cap S_2$. The radical of ${\cS}_0$ is a point, say $p_0$. Two totally isotropic planes $A_1$ and $A_2$ of ${\cS}_0$ are also given in such a way that $A_1\cap A_2 = \{p_0\}$. The poles of $H$ are the points of $S_1\cup S_2$, the poles of degree 4 being the points of $A_1\cup A_2$. The lines of ${\cP}(H)$ are the totally isotropic lines of ${\cS}_i$ that meet $A_i$ non-trivially, for $i = 1, 2$.
\item
 $h$ has type $T_6$. The point-set $P(H)$ of ${\cP}(H)$ is a hyperplane of $\PG(V)$. A non-degenerate polar space $\cal S$ of symplectic type is defined over $P(H)$ and a totally isotropic plane $A$ of $\cal S$ is given. The lines of ${\cP}(H)$ are the lines of $\cal S$ that meet $A$ non-trivially. The points of $A$ are the poles of $H$ of degree $4$.
\item
  $h$ has type $T_7$. With $h$ in canonical form, let $C$ be the conic described by the equation $x_1^2 = x_2x_3$ in the plane $S := [e_1, e_2, e_3]$ of $\PG(V)$ and let $Q$ be the hyperbolic quadric of $[e_4, e_5, e_6, e_7]$ with equation $x_4x_6 + x_5x_7 = 0$. Let $\Gamma$ be the cone with $S$ as the vertex and $Q$ as the basis. Then $P(H) = \Gamma$ and $C$ is the set of poles of degree $4$. The lines of ${\cP}(H)$ are the lines of $S$ and the lines $[x,y]\subset \Gamma$ with $|x,y|_{1,4} = |x,y|_{3,7}$, $|x,y|_{1,7} = |x,y|_{2,4}$ and satisfying one of the following:
\begin{enumerate}[a)]
\item $[x,y]\cap S$ is a point of $C$ and $|x,y|_{1,5} + |x,y|_{3,6} = |x,y|_{1,6} + |x,y|_{2,5} = 0$;
\item $[x,y]\cap S = \emptyset$ and $|x,y|_{4,6} + |x,y|_{5,7} = |x,y|_{4,5} = |x,y|_{6,7} = 0$.
\end{enumerate}
\item
 $h$ has type $T_8$. In this case $H$ is a symplectic hyperplane. In view of Corollary {\rm~\ref{symp hyp cor}}, the geometry ${\cP}(H)$ is a non-degenerate polar space of symplectic type and rank $3$, naturally embedded in a hyperplane of $\PG(V)$. All poles of $H$ have degree $4$.
\item\label{tspr-5}
 $h$ has type $T_9$. Then ${\cP}(H)$ is a split Cayley hexagon naturally embedded in a non-singular quadric of $\PG(V)$. Its dual admits a natural embedding in $\cG_2(V)$. All poles of $H$ have degree $2$.
More explicitly, with $h$ in canonical form, the set $P(H)$ of the poles of $H$ is the quadric of $\PG(V)$ described by the equation $x_1x_4 + x_2x_5 + x_3x_6 = x_7^2$ and the lines of ${\cP}(H)$ are the lines $[x,y]$ of $\PG(V)$ contained in $P(H)$ and satisfying the following equations:\\
$|x,y|_{1,2} + |x,y|_{6,7} ~ = ~ |x,y|_{2,3} + |x,y|_{4,7} ~ = ~ |x,y|_{4,6} + |x,y|_{2,7} ~ = ~ 0$, \\
$|x,y|_{1,3} = |x,y|_{5,7}$, ~ $|x,y|_{4,5} = |x,y|_{3,7}$, ~ $|x,y|_{5,6} = |x,y|_{1,7}$.
\addtocounter{enumi}{1}
\item $h$ has type $T^{(r)}_{11,\lambda}$, $r = 1, 2$. With $h$ in canonical form, $P(H) = [e_2, e_3, e_5, e_6, e_7]$ and $\langle e_7\rangle$ is the unique pole of degree $4$. The elements of $R^\uparrow(H)$ are the lines $[x,y]\subset P(H)$ such that:\\
$|x,y|_{2,3} + \lambda |x,y|_{5,6} = 0$ and $|x,y|_{3,5} = |x,y|_{2,6}$ when $r = 1$;\\
$\lambda|x,y|_{2,3} + |x,y|_{2,6} + |x,y|_{3,5} = 0$ and $|x,y|_{2,3} = |x,y|_{5,6}$ for $r = 2$.
\end{enumerate}
\end{theorem}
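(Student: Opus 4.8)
The plan is to treat the seven rank-$7$ types $T_5,T_6,T_7,T_8,T_9,T^{(1)}_{11,\lambda},T^{(2)}_{11,\lambda}$ of Table~\ref{Tab F} one at a time, fixing for each the canonical representative $h$ and the basis $E=(e_i)_{i=1}^7$, and reducing everything to one linear object: the kernel $K(H)=\ker(\tilde f_H)\subseteq\bigwedge^2 V$ of the map $\tilde f_H$ of~\eqref{tildef}. Since $\rk(h)=7$ forces $\Rad(\chi)=0$, the map $\tilde f_H\colon\bigwedge^2V\to V^*$ is surjective, so $\dim K(H)=\binom{7}{2}-7=14$. Concretely $\tilde f_H(e_i\wedge e_j)$ sends $e_l$ to the coefficient of $\underline{ijl}$ in $h$ (reordered with the usual sign), so writing out this $21\times7$ matrix and carrying out a finite row reduction produces $7$ explicit linear equations cutting $K(H)$ out of $\bigwedge^2V$. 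By~\eqref{aG} one has $\RR^\uparrow(H)=[K(H)]\cap\GG_2(V)$, hence a line $[x,y]$ of $\PG(V)$ belongs to $R^\uparrow(H)$ if and only if $x\wedge y\in K(H)$, i.e.\ if and only if its Pl\"ucker coordinates $|x,y|_{i,j}$ satisfy those $7$ equations. Rewritten in the $|x,y|_{i,j}$ these are exactly the line-equations for ${\cP}(H)$ displayed in the statement (for $T_7$ after repackaging them according to the geometry of the cone). This is the first, routine, half of the argument.

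The second half recovers the point-set $P(H)$ and the degrees from the same $K(H)$. By~\eqref{eD} one has $f_{H,\langle v\rangle}(\langle v\rangle+x,\langle v\rangle+y)=\tilde f_H(x\wedge y)(v)$, so a vector $y$ represents an element of $R_p(H)=R^\uparrow(H)\cap(p)G_2$ (Proposition~\ref{polar space bis}) exactly when $v\wedge y\in K(H)$. Introduce the linear map $\phi_v\colon V\to\bigwedge^2V/K(H)\cong\KK^7$, $y\mapsto v\wedge y+K(H)$. Then $\ker\phi_v=\{y\colon v\wedge y\in K(H)\}\supseteq\langle v\rangle$, and $r(p)=\dim\ker\phi_v-1=6-\rk\phi_v$. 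As $\Rad(\chi)=0$ and $n$ is odd, $r(p)\in\{0,2,4\}$; hence $P(H)$ is the determinantal locus $\{\langle v\rangle\colon\rk\phi_v\le4\}$ and the poles of degree $4$ form the locus $\{\langle v\rangle\colon\rk\phi_v\le2\}$. The matrix of $\phi_v$ is linear in $v$, so expanding and simplifying the relevant minors yields precisely the hyperplane/cone/quadric descriptions of $P(H)$ and the plane/conic/point descriptions of the degree-$4$ locus asserted for each type; in particular, for $T_9$ the claim that every pole has degree $2$ says exactly that $\{\rk\phi_v\le2\}$ is empty, which falls out of the same computation. One type admits a structural shortcut that bypasses this: for $T_8$ one checks that $h=e^1\wedge(e^2\wedge e^3+e^4\wedge e^5+e^6\wedge e^7)$ is equivalent to the form~\eqref{expanded form} of a symplectic hyperplane $E(H_0)$ with $H_0$ the line-set of a non-degenerate ${\cW}(5,\KK)$ on a hyperplane $V_0$ of $V$; then ${\cP}(H)={\cS}(H_0)\cong{\cW}(5,\KK)$ by Corollary~\ref{symp hyp cor}, and for $p\in[V_0]$ the projective space $[V_0/\langle v\rangle]$ is a singular subspace of ${\cS}_p(H)$ of rank $5$, forcing $r(p)\ge4$ and hence $r(p)=4$.

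The genuine difficulty lies not in the linear algebra but in identifying the resulting coordinate configurations with the named classical geometries. For $T_5$, $T_6$ and $T^{(r)}_{11,\lambda}$ the target is a (possibly degenerate) symplectic polar space equipped with a distinguished totally isotropic plane: one reads the alternating form off $P(H)$, recognizes the polar space, and checks that the computed lines are exactly its totally isotropic lines meeting the plane; the degree statements then follow from the rank count above. For $T_7$ the delicate point is to verify that $P(H)$ is genuinely the cone $\Gamma$ with vertex $S=[e_1,e_2,e_3]$ over the hyperbolic quadric $Q$, that $C$ is its singular conic, and to split the lines of ${\cP}(H)$ into the families (a) and (b) according to whether they meet $S$; this demands careful bookkeeping of the Pl\"ucker relations together with the membership condition $x\wedge y\in K(H)$. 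For $T_9$ the hardest step is the identification of ${\cP}(H)$ with the split Cayley hexagon: after checking that $P(H)$ is precisely the parabolic quadric $x_1x_4+x_2x_5+x_3x_6=x_7^2$ and that every line of $R^\uparrow(H)$ lies on it, one matches the displayed Pl\"ucker equations with the standard coordinatization of $H(\KK)\hookrightarrow Q(6,\KK)$ in Van Maldeghem~\cite{VM98}; alternatively one invokes that the stabilizer of $h$ in $\GL(V)$ is of type $G_2$ and that ${\cP}(H)$ is its natural generalized hexagon. Assembling these seven verifications gives the theorem.
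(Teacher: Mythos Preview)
Your proposal is correct and follows essentially the same route as the paper's own proof: compute the lines of $R^\uparrow(H)$ by intersecting $[K(H)]=[\ker\tilde f_H]$ with $\GG_2(V)$, and determine the poles and their degrees via the radical of the bilinear form $h_v(x,y)=h(v,x,y)$ (your map $\phi_v$ has $\ker\phi_v=\Rad(h_v)$, so the two formulations coincide). The paper is in fact terser than you are---it omits the case-by-case computations, referring to \cite{IL16} and to Cohen--Helminck \cite{CH88} for the descriptions of $P(H)$---so your more explicit organization, including the $T_8$ shortcut via Corollary~\ref{symp hyp cor} and the identification of ${\cP}(H)$ for $T_9$ with the split Cayley hexagon through \cite{VM98}, is entirely in keeping with what the paper sketches.
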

\begin{proof}
All of the above can be proved using the technique sketched in the proof of Theorem~\ref{case n=6}, based on the investigation of $\ker(\tilde{h})$, combined with the following remark, more suited to an investigation of the poles of $H$.
 Given a non-zero vector $a\in V$, consider the degenerate alternating bilinear form $h_a(x,y) := h(a,x,y):V\times V\rightarrow \KK$. Then, the lines of $R^\uparrow(H)$ through $\langle a\rangle$ are the $2$-spaces $\langle a,x\rangle$ contained in $\Rad(h_a)$. We omit the details of these computations, referring the reader to \cite{IL16} for them. Descriptions of $P(H)$ are also given by Cohen and Helminck \cite{CH88} for all cases of Theorem~\ref{tspr} except the last one.
Observe that in all cases $[P(H)]$ consists of the points of some (possibly degenerate) quadric in $\PG(V)$; this is consistent with \cite[Theorem 3.2]{DS10}.
\end{proof}

\begin{corollary}\label{n=7 conclusione1}
Let $n = 7$ and assume that $\KK$ belongs to ${\bf Cd}\{0,1\}$ and is perfect. Then $R^\uparrow(H) \neq \emptyset$ for every hyperplane $H$ of $\cG_3(V)$.
\end{corollary}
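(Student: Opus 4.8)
The plan is to reduce the statement to the classification of non-trivial linear functionals of $\bigwedge^3V$ due to Cohen and Helminck \cite{CH88}, available precisely because $\KK$ is perfect and belongs to ${\bf Cd}\{0,1\}$: every such functional $h$ is nearly equivalent to one of the types of Table~\ref{Tab F} (or to a scalar multiple of $T_9$). Since $R^\uparrow(H)$ depends only on the proportionality class of $h$, hence only on the type of $h$, it will suffice to check $R^\uparrow(H)\neq\emptyset$ type by type, and I would organize the check according to the value of $\rk(h)$.

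The low-rank cases are immediate. By Proposition~\ref{rank 4} the values $\rk(h)<7$ that actually occur are $\rk(h)\in\{3,5,6\}$, so $\Rad(\chi)\neq 0$, where $\chi$ is the alternating $3$-linear form attached to $h$; equivalently $R_\downarrow(H)=[\Rad(\chi)]\neq\emptyset$. As $R^\uparrow(H)$ contains every $2$-subspace of $V$ meeting $R_\downarrow(H)$ non-trivially, we get $R^\uparrow(H)\neq\emptyset$ at once. (Alternatively one may use Corollary~\ref{ext hyp cor2} and Theorem~\ref{ext hyp th} to see that $H$ is then either a trivial hyperplane or a trivial extension of a hyperplane of $\cG_3(V_0)$ with $5\leq\dim(V_0)\leq 6$, and conclude through Proposition~\ref{ext hyp sing}, claim~(\ref{ext-2}) of Corollary~\ref{ext hyp cor}, Corollary~\ref{l5 cor} and Theorem~\ref{case n=6}; but the direct argument via $R_\downarrow(H)$ is shorter.)

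For the full-rank case $\rk(h)=7$ the classification leaves only the types $T_5$, $T_6$, $T_7$, $T_8$, $T_9$ and $T^{(r)}_{11,\lambda}$ ($r=1,2$; with $\chr(\KK)\neq 2$ in type $T^{(1)}_{11,\lambda}$, as $\KK$ is perfect). Here I would simply read off Theorem~\ref{tspr}, which already describes the geometry of poles $\cP(H)$ whose line-set is exactly $R^\uparrow(H)$: for $T_7$ this line-set contains all lines of a plane of $\PG(V)$; for $T_8$ it is the line-set of a non-degenerate rank-$3$ symplectic polar space; for $T_9$ it is the line-set of a split Cayley hexagon; and for $T_5$, $T_6$ and $T^{(r)}_{11,\lambda}$ there exists at least one pole of degree $4$, and any pole lies on a line of $R^\uparrow(H)$ by the very definition of a pole. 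In every case $R^\uparrow(H)\neq\emptyset$, which completes the plan.

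I do not expect a genuine obstacle: granted the Cohen--Helminck classification and Theorem~\ref{tspr}, the assertion reduces to a finite inspection. The only points deserving care are the completeness of the list of full-rank types and the correct use of perfectness to rule out $T^{(1)}_{11,\lambda}$ in characteristic $2$; one should also keep in mind (see Note~\ref{Class7 note}) that the hypothesis $\KK\in{\bf Cd}\{0,1\}$ cannot be dispensed with, since over $\RR$ (and its subfields) there is a hyperplane $H$ of $\cG_3(V)$ with $\dim V=7$ and $R^\uparrow(H)=\emptyset$.
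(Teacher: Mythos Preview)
Your proposal is correct and follows essentially the same argument as the paper's own proof: both split according to whether $\rk(h)<7$ (where $R_\downarrow(H)\neq\emptyset$ immediately forces $R^\uparrow(H)\neq\emptyset$) or $\rk(h)=7$ (where the Cohen--Helminck classification puts $h$ under the hypotheses of Theorem~\ref{tspr}, and one reads off that the pole-set $P(H)$ is non-empty in every case). The paper is merely terser, observing in one line that $P(H)\neq\emptyset$ in each case of Theorem~\ref{tspr}, whereas you spell out the types individually; there is no substantive difference.
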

\begin{proof} Let $h\in (\bigwedge^3V)^*$ be such that $H_h = H$. As $R^\uparrow(H)\supseteq G_2(p)$ for every $p\in R_\downarrow(H)$, if  $R_\downarrow(H) \neq \emptyset$ then $R^\uparrow(H) \neq \emptyset$. Assume that $R_\downarrow(H) = \emptyset$, namely $\rk(h) = 7$.
Then $h$ satisfies the hypotheses of Theorem~\ref{tspr}, by \cite{CH88}. Recall that $R^\uparrow(H) = \emptyset$ if and only if $P(H) = \emptyset$. However $P(H)\neq \emptyset$ in each of the cases examined in Theorem~\ref{tspr}. Hence $R^\uparrow(H) \neq \emptyset$.
\end{proof}

Henceforth, in view of the description given for ${\cP}(H)$ in Theorem~\ref{tspr}, we call a hyperplane $H=H_h$ of $\cG_3(V)$ with $h$ of type $T_9$, \emph{hexagonal}.

\begin{corollary}\label{n=7 conclusione2}
Let $n = 7$ and assume that $\KK\in {\bf Cd}\{0,1\}$ is perfect. Let $H$ be a hyperplane of $\cG_3(V)$. Then $R^\uparrow(H)$ contains no singular plane of the point-line geometry $\cG_2(V)$ if and only if $H$ is hexagonal.
\end{corollary}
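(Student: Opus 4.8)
The plan is to derive the statement from the classification of hyperplanes of $\cG_3(V)$ for $n=7$ over a perfect field $\KK\in{\bf Cd}\{0,1\}$ (Cohen and Helminck~\cite{CH88}) together with the description of the geometry of poles in Theorem~\ref{tspr}. The first step is to record what the singular planes of $\cG_2(V)$ look like. Two $2$-subspaces of $V$ are collinear in $\cG_2(V)$ precisely when they meet in a point, so a singular subspace of $\cG_2(V)$ either has all of its members through a fixed point $p$ (hence lies in the \emph{star} $(p)G_2\cong\PG(V/p)$) or has its members spanning a fixed $3$-subspace $Z$ (hence lies in the \emph{top} $\mathrm{Top}(Z):=\{X\in G_2(V):X\subseteq Z\}$, which is a projective plane). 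Consequently a singular plane of $\cG_2(V)$ is either a plane $\{X:p\subseteq X\subseteq W\}$ inside a star, for some $4$-subspace $W\supseteq p$, or a whole top $\mathrm{Top}(Z)$. Using Proposition~\ref{polar space bis} in the form $R^\uparrow(H)\cap(p)G_2=R_p(H)$, one checks that $R^\uparrow(H)$ contains a singular plane if and only if one of the following holds: \emph{(a)} some point $p$ has degree $r(p)\geq 3$ (\S~\ref{poles}), in which case the projective subspace $R_p(H)$ of $\PG(V/p)$ contains a projective plane and that plane is a singular plane of $\cG_2(V)$; or \emph{(b)} $\mathrm{Top}(Z)\subseteq R^\uparrow(H)$ for some $3$-subspace $Z$.

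For the implication ``$H$ not hexagonal $\Rightarrow R^\uparrow(H)$ contains a singular plane'', I would argue as follows. If $R_\downarrow(H)\neq\emptyset$, choose $p\in R_\downarrow(H)$; then $r(p)=n-1=6$, so alternative \emph{(a)} holds. Otherwise $R_\downarrow(H)=\emptyset$ and, by \cite{CH88}, $H$ is of one of the rank-$7$ types $T_5,\dots,T_9,T^{(1)}_{11,\lambda},T^{(2)}_{11,\lambda}$ of Table~\ref{Tab F}; since $H$ is not hexagonal, its type is not $T_9$. In every remaining case Theorem~\ref{tspr} explicitly exhibits a pole of degree $4$, hence a point $p$ with $r(p)\geq 3$: the points of $A_1\cup A_2$ in case $T_5$, the points of the plane $A$ in case $T_6$, the points of the conic $C$ in case $T_7$, every pole of $H$ in case $T_8$ (and poles exist, since $R^\uparrow(H)=H_0\neq\emptyset$), and the point $\langle e_7\rangle$ in cases $T^{(r)}_{11,\lambda}$. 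Thus alternative \emph{(a)} holds and $R^\uparrow(H)$ contains a singular plane.

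For the converse, suppose $H$ is hexagonal. By Theorem~\ref{tspr}(\ref{tspr-5}), $\cP(H)$ is a split Cayley hexagon and every pole of $H$ has degree $2$; since the non-poles have degree $0$, we get $r(p)\leq 2$ for every point $p$, so alternative \emph{(a)} cannot occur. To exclude \emph{(b)}, assume $\mathrm{Top}(Z)\subseteq R^\uparrow(H)$ for some $3$-subspace $Z$. Then every line of the projective plane $[Z]$ is a line of $\cP(H)$, and since $[Z]$ is a projective plane over a field it contains three pairwise non-concurrent lines, which then form a triangle in $\cP(H)$. This is impossible, because $\cP(H)$, being a generalized hexagon, has girth $12$. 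Hence neither \emph{(a)} nor \emph{(b)} holds, and $R^\uparrow(H)$ contains no singular plane, which completes the proof.

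The routine part is reading the poles of degree $4$ off Theorem~\ref{tspr}; the step that requires genuine care is the first one, namely pinning down exactly what the singular planes of $\cG_2(V)$ are and checking that ``$R^\uparrow(H)$ contains one'' is faithfully encoded by the disjunction \emph{(a)}--\emph{(b)}. A secondary point to watch is case \emph{(b)} in the hexagonal direction: one must actually produce a triangle inside $\cP(H)$ (not merely two incident lines), which is where the fact that $[Z]$ is a genuine projective plane over a field is used.
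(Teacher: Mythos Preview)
Your proof is correct and follows essentially the same approach as the paper's. The paper phrases the key equivalence slightly differently---``$R^\uparrow(H)$ contains a singular plane of $\cG_2(V)$ if and only if $H$ has a pole of degree $4$ or $6$, or $\cP(H)$ contains a proper triangle''---whereas you first classify the singular planes of $\cG_2(V)$ into star-type and top-type and then translate each into your conditions \emph{(a)} and \emph{(b)}. These two formulations are equivalent (a star-type plane in $R^\uparrow(H)$ is exactly a pole of degree at least $4$, since degrees are even when $n=7$; a top $\mathrm{Top}(Z)\subseteq R^\uparrow(H)$ yields a triangle in $\cP(H)$, and conversely a triangle forces, via the subspace property of $R^\uparrow(H)$, the whole top to lie in $R^\uparrow(H)$). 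Both proofs then finish identically: invoke the classification to find a pole of degree $4$ or $6$ in every non-hexagonal case, and use the girth of the split Cayley hexagon to exclude triangles in the hexagonal case. Your version is a touch more explicit about the geometry of $\cG_2(V)$; the paper's is a touch more compressed. One cosmetic remark: the phrase ``three pairwise non-concurrent lines'' is awkward (any two lines in a projective plane meet); you mean three lines not all through a common point.
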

\begin{proof} We know that $R^\uparrow(H)$ is a subspace of $\cG_2(V)$ (Proposition~\ref{V2(H) subspace}). This subspace contains at least a singular plane of $\cG_2(V)$ if and only if either $H$ admits at least one pole of degree $4$ or $6$ or the geometry of poles ${\cP}(H)$ contains at least one proper triangle, namely a non-collinear triple of pairwise collinear points.
Indeed, the set of lines of $\PG(V)$ through a pole of degree $6$ is a maximal singular subspace of $\cG_2(V)$ while the lines of ${\cP}(H)$ through a pole of degree $4$ form a singular subspace of $\cG_3(V)$ of rank $4$. On the other hand, the three sides of a triangle of ${\cP}(H)$, regarded as points of $\cG(V)$, span a singular plane of $\cG_2(V)$.

Let $h\in (\bigwedge^3V)^*$ be such that $H = H_h$. As $R_\downarrow(H)$ is the set of poles of $H$ of degree 6, if $\rk(h) < 7$ then $P(H)$ contains at least one maximal singular subspace of $\cG_2(V)$.  Suppose now $\rk(h) = 7$; then $h$ satisfies the hypotheses of Theorem~\ref{tspr}. In each of the cases considered in Theorem~\ref{tspr} except $T_9$, the hyperplane $H$ admits at least one pole of degree $4$. In case $T_9$ all poles have degree $2$ and  ${\cP}(H)$, being a generalized hexagon, contains no triangles.
Thus, in this case $R^\uparrow(H)$ contains no singular plane of $\cG_2(V)$.
\end{proof}

\section{Proof of Theorem~\ref{main 1}} \label{Sect4}

Throughout this section $\KK$ is a perfect field in the class ${\bf Cd}\{0,1\}$, $\dim(V) = 8$ and $H$ is a given hyperplane of $\cG_3(V)$. Moreover,  $h$ is a linear functional of $\bigwedge^3V$ associated to $H$, namely $H = \varepsilon_3^{-1}([\ker (h)])$. (Recall that $h$ is uniquely determined by $H$ up to a scalar). In the first part of this section, without assuming that $\KK$ satisfies $(*)$ of Theorem~\ref{main 1}, we prove some properties of $h$ under the assumption that $R^{\uparrow}(H)$ is a spread. In the second part of the section we shall complete the proof of Theorem~\ref{main 1} by showing that hypothesis $(*)$ of Theorem~\ref{main 1} contradicts what we have proved in the first part.

\subsection{Preliminary results}\label{Sect4 prel}

Let $V'\leq V$ be a subspace.
As stated in Subsection~\ref{Notation}, if $\dim(V') > 3$ then  $H(V') = G_3(V')\cap H$ is the set of members of $H$ contained in $V'$. Clearly, either $H(V') = G_3(V')$ or $H(V')$ is a hyperplane of $\cG_3(V')$.

We recall that, according to a definition stated in \S~\ref{n = 7}, given a $7$-dimensional vector space $V'$, the hexagonal hyperplanes of $\cG_3(V')$ are those of type $T_9$;
accordingly,  the set of poles of a hexagonal hyperplane is a non-singular quadric while its geometry of poles is a split Cayley hexagon (see case \ref{tspr-5} of Theorem~\ref{tspr}).

\begin{lemma}\label{L2-i}
The upper radical $R^\uparrow(H)$ is a spread if and only if $H(V')$ is a hexagonal hyperplane of $\cG_3(V')$, for every hyperplane $V'$ of $V$.
\end{lemma}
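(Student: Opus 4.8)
The plan is to translate the assertion that $R^\uparrow(H)$ is a spread of $\PG(V)$ into a statement about the symplectic polar spaces $\cS_p(H(V'))$ as $p$ ranges over the points of $\PG(V')$, and then to invoke the classification of hyperplanes of seven-dimensional Grassmannians via Corollary~\ref{n=7 conclusione2} and the description of hexagonal hyperplanes in Theorem~\ref{tspr}. Throughout I fix a hyperplane $V'$ of $V$ (so $\dim(V')=7$) and a vector $v_0\in V\setminus V'$, and write $h$ for a linear functional defining $H$. I shall use repeatedly that, for a point $p$ of $\PG(V)$, the polar space $\cS_p(H)$ is defined on $V/p$ by the alternating bilinear form of~\eqref{eD}; that $R_p(H)=\Rad(\cS_p(H))$ is exactly the set of lines of $R^\uparrow(H)$ through $p$ (Proposition~\ref{polar space bis}); and that $R_p(H)$ has odd rank --- hence is non-empty --- because $\dim(V/p)=7$, while $R_p(H(V'))$ has even rank because $\dim(V'/p)=6$.

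For the ``only if'' direction, assume $R^\uparrow(H)$ is a spread, so $R_p(H)$ is a single point for every $p$. First I would record the elementary fact that, for an alternating bilinear form $\omega$ on a vector space and a subspace $U$, the dimension of $\Rad(\omega|_U)$ exceeds $\dim\Rad(\omega)$ by at most the codimension of $U$ (pass to the non-degenerate quotient modulo $\Rad(\omega)$ and compare dimensions of perpendiculars). Applied to $\cS_p(H)$ and the codimension-one subspace $V'/p$ of $V/p$, this gives $\rk(R_p(H(V')))\le 1+1=2$ for every point $p$ of $\PG(V')$; in particular $H(V')$ has no pole of degree $4$ or $6$. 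The heart of the argument is then the claim that no plane $\Pi$ of $\PG(V')$ has all of its lines inside $R^\uparrow(H(V'))$: such a $\Pi$ would force $h(x,y,w)=0$ for all $x,y\in\Pi$ and all $w\in V'$, whence the alternating bilinear form $(x,y)\mapsto h(x,y,v_0)$ on the $3$-dimensional space $\Pi$ is degenerate and has a non-zero radical vector $r_0$; then $\langle r_0,a\rangle$ lies in $R^\uparrow(H)$ for every $a\in\Pi\setminus\langle r_0\rangle$, so any two vectors $a_1,a_2\in\Pi$ independent modulo $\langle r_0\rangle$ give two distinct lines of $R^\uparrow(H)$ through the point $\langle r_0\rangle$, contradicting the spread hypothesis. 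The same computation with an arbitrary plane $\Pi$ of $\PG(V')$ also rules out $G_3(V')\subseteq H$, so $H(V')$ is genuinely a hyperplane of $\cG_3(V')$. Now every singular plane of $\cG_2(V')$ either is the set of all lines contained in a plane $\Pi$ of $\PG(V')$ or else consists entirely of lines through a common point $p$; the first kind cannot lie in $R^\uparrow(H(V'))$ by the claim just proved, and the second kind cannot either, since all its members would lie in $R_p(H(V'))$ and would force $\rk(R_p(H(V')))\ge 3$, against the bound above. Hence $R^\uparrow(H(V'))$ contains no singular plane of $\cG_2(V')$, and Corollary~\ref{n=7 conclusione2} yields that $H(V')$ is hexagonal.

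For the ``if'' direction I would argue by contraposition. Suppose $R^\uparrow(H)$ is not a spread. Since $R_p(H)$ is non-empty of odd rank for every point $p$ of $\PG(V)$, some $p$ has $\rk(R_p(H))\ge 3$, so $R_p(H)$ contains a plane $\Pi_0$ of $\PG(V/p)$. Choose a hyperplane $V'$ of $V$ with $p\in V'$ and $\Pi_0\subseteq\PG(V'/p)$. Because $R_p(H)$ is perpendicular to the whole of $V/p$ in $\cS_p(H)$, we get $\Pi_0\subseteq\Rad(\cS_p(H(V')))=R_p(H(V'))$, so $\rk(R_p(H(V')))\ge 3$; being even, this rank is at least $4$, i.e.\ $p$ is a pole of $H(V')$ of degree at least $4$. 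But by the description of hexagonal hyperplanes in Theorem~\ref{tspr} every pole of a hexagonal hyperplane has degree $2$, so $H(V')$ is not hexagonal, which is what the contrapositive requires.

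The step I expect to be the main obstacle is the ``only if'' direction: one must exclude both kinds of singular planes of $\cG_2(V')$ from $R^\uparrow(H(V'))$ and make sure these exhaust all singular planes, so that the criterion of Corollary~\ref{n=7 conclusione2} becomes applicable; the two supporting bilinear-form facts (the codimension bound on the radical and the $r_0$-argument) are routine but need to be carried out keeping careful track of which ambient space each form lives on.
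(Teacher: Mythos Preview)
Your argument is correct. The contrapositive (``if'') direction is essentially identical to the paper's: pick a pole $p$ with $\rk(R_p(H))\ge 3$, choose a hyperplane $V'$ so that $V'/p$ swallows a plane of $R_p(H)$, and observe that $p$ becomes a pole of degree $\ge 4$ in $H(V')$, which a hexagonal hyperplane cannot have.

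For the ``only if'' direction the paper takes a shorter route than you do. Both you and the paper first bound $\rk(R_p(H(V')))\le 2$ via the codimension-one radical estimate; but at that point the paper simply observes that, by inspection of the Cohen--Helminck list in Theorem~\ref{tspr} (together with the fact that types of rank $<7$ have poles of degree $6$), $T_9$ is the \emph{only} type whose poles all have degree at most $2$, so $H(V')$ must be hexagonal. You instead aim for Corollary~\ref{n=7 conclusione2} and therefore need to exclude Greek-type singular planes of $\cG_2(V')$ separately, which you do with the $r_0$ argument (the radical of the induced alternating form on the $3$-space $\Pi$). That extra step is valid and pleasant, and it has the minor advantage of invoking Corollary~\ref{n=7 conclusione2} as a black box rather than re-reading the case list of Theorem~\ref{tspr}; but it is not needed, since ``all poles have degree $\le 2$'' already singles out $T_9$.
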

\begin{proof}
Suppose firstly that $R^\uparrow(H)$ is not a spread. Then $H$ admits a pole $p$ of degree $r(p)\geq  3$. Let $V'$ be a hyperplane of $V$ containing $p$ and such that $[V'/p]\cap R_p(H)$ has rank at least $3$. Such a hyperplane certainly exists. Indeed if $R_p(H) \neq [V/p]$ any hyperplane $V' < V$ containing $p$ and such that $[V'/p]\supseteq R_p(H)$ has the required property; otherwise any hyperplane of $V$ containing $p$ does the job. Clearly, $R_p(H)\cap[V'/p] \subseteq R_p(H(V'))$. Hence, either $H(V') = G_3(V')$ or $H(V')$ is a hyperplane of $\cG_3(V')$ admitting $p$ as a pole of degree greater than $2$ (whence, at least $4$). However, all poles of a hexagonal hyperplane have degree $2$ (see Theorem~\ref{tspr}, case (\ref{tspr-5})). Therefore, $H(V')$ cannot be a hexagonal hyperplane.

Conversely, assume that $R^\uparrow(H)$ is a spread and let $V'$ be a hyperplane of $V$. If $H(V')$ is not a hyperplane of $\cG_3(V')$ then $G_3(V')\subseteq H$. Given $p\in[V']$, the quotient space $[V'/p]$ is a singular subspace of the polar space ${\cS}_p(H)$ of rank $6$. However, the radical of ${\cS}_p(H)$ has rank $1$, by assumption. Hence the singular subspaces of ${\cS}_p(H)$ have rank at most $4$ --- a contradiction. It follows that $H(V')$ is a hyperplane of $\cG_3(V')$.

Let now $p\in [V']$ and let $r$ be its degree with respect to the hyperplane $H(V')$. Then $r$ is the rank of the radical $R_p(H(V'))$ of the polar space ${\cS}_p(H(V'))$. On the other hand, ${\cS}_p(H(V'))$ is the polar space induced by ${\cS}_p(H)$ on $V'/p$. As, by assumption, the radical $R_p(H)$ of ${\cS}_p(H)$ is a point of $[V/p]$, the radical $R_p(H(V'))$ of ${\cS}_p(H(V'))$ is either empty or a line of $\PG(V'/p)$, namely either $r = 0$ or $r = 2$. Thus we have proved that all poles of $H(V')$ have degree 2. It follows that the hyperplane $H(V')$ is hexagonal (compare with
Theorem~\ref{tspr} or the proof of Corollary~\ref{n=7 conclusione2}).
\end{proof}

Henceforth we assume that $R^{\uparrow}(H)$ is a line-spread of $\PG(V)$. We put $\Sigma := R^\uparrow(H)$ for short and, for a point $p\in \PG(V)$, we denote by $\ell_p$ the unique line of $\Sigma$ containing $p$. For a subspace $V'\leq V$, we put $\Sigma(V'):=\{\ell\in\Sigma~\colon~\ell\subseteq [V'] \}$.

\begin{lemma}\label{L1-i}
Assume that $\Sigma = R^{\uparrow}(H)$ is a spread and let $V'$ be a hyperplane of $V$.
Then $\ell_p\in\Sigma(V')$ for every pole $p$ of $H(V')$.
\end{lemma}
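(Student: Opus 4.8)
The plan is to translate the statement into a question about radicals of alternating bilinear forms and to settle it by a dimension count. Fix a pole $p$ of $H(V')$ and set $U := V/\langle p\rangle$ and $U' := V'/\langle p\rangle$; since $p\in V'$, the space $U'$ is a hyperplane of $U$, with $\dim U = 7$ and $\dim U' = 6$. Let $f := f_{H,\langle p\rangle}$ be the alternating bilinear form on $U$ whose associated polar space is ${\cS}_{\langle p\rangle}(H)$, as in \eqref{eqB}. By the definition of $H(V')$, the restriction $f|_{U'}$ is, up to a scalar, the form $f_{H(V'),\langle p\rangle}$, so ${\cS}_{\langle p\rangle}(H(V'))$ is the polar space induced by ${\cS}_{\langle p\rangle}(H)$ on the hyperplane $U'$ of $U$.

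Next I would read off the two dimensions that matter. Since $\Sigma = R^\uparrow(H)$ is a spread, the point $p$ lies on exactly one member $\ell_p$ of $\Sigma$; by Proposition~\ref{polar space bis} this means that $R_p(H) = \Rad(f)$ is a single point of $\PG(U)$, namely the $1$-dimensional subspace $\ell_p/\langle p\rangle$. On the other hand, by Lemma~\ref{L2-i} the hyperplane $H(V')$ is hexagonal, so by Theorem~\ref{tspr}(\ref{tspr-5}) all of its poles (in particular $p$) have degree $2$; hence $R_p(H(V')) = \Rad(f|_{U'})$ is a $2$-dimensional subspace of $U'$.

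The crux is then to show that $\Rad(f)\subseteq U'$. Choosing any $w\in U\setminus U'$ and restricting the linear functional $u\mapsto f(u,w)$ to $\Rad(f|_{U'})$, one checks that its kernel is exactly $\Rad(f)\cap U'$: indeed an element of $\Rad(f|_{U'})$ is $f$-orthogonal to all of $U'$, so it lies in $\Rad(f)$ precisely when it is also $f$-orthogonal to $w$. Hence $\dim(\Rad(f)\cap U') \geq \dim\Rad(f|_{U'}) - 1 = 1$, and since $\dim\Rad(f) = 1$ this forces $\Rad(f)\subseteq U'$, i.e. $\ell_p/\langle p\rangle \subseteq V'/\langle p\rangle$. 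As $p\in V'$, this gives $\ell_p\subseteq [V']$, that is $\ell_p\in\Sigma(V')$, as wanted.

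I do not expect a genuine obstacle here: once the dictionary with alternating forms is in place, the conclusion is forced by the rank-nullity computation above. The only delicate points are bookkeeping ones: that restricting $H$ to the hyperplane $V'$ corresponds to restricting the form $f$ to the hyperplane $U'$, and that ``a pole of degree $2$'' means ``a radical of vector dimension $2$''; both facts are already available in the paper (the discussion around \eqref{eqB} and Theorem~\ref{tspr}). Note also that the hypothesis $\dim V = 8$ enters only through Lemma~\ref{L2-i}, which guarantees that $H(V')$ is hexagonal; the radical argument itself is dimension-free.
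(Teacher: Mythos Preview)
Your argument is correct, and at its core it is the same computation the paper performs: an element of $\Rad(f|_{U'})$ is already $f$-orthogonal to the whole hyperplane $U'$, so only one further direction needs to be controlled. The paper phrases this as a contradiction in the language of polar spaces: assuming $\ell_p\not\subseteq[V']$, it picks any $\ell'\in R^\uparrow(H(V'))$ through $p$, observes that $\ell'^{\perp}\supseteq [V'/p]$ and that $\ell_p=\Rad(f)$ is orthogonal to $\ell'$ as well, so $\ell'^{\perp}=[V/p]$ and hence $\ell'=\ell_p$, which is impossible since $\ell'\subseteq[V']$. You reach the same conclusion via rank--nullity on the functional $u\mapsto f(u,w)$ restricted to $\Rad(f|_{U'})$.

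The one point worth flagging is your detour through Lemma~\ref{L2-i} and Theorem~\ref{tspr}. You invoke the hexagonal classification only to learn that $\dim\Rad(f|_{U'})=2$, but all your argument needs is $\dim\Rad(f|_{U'})\geq 2$, and this follows immediately from parity: $\dim U'=6$ is even, so the radical of an alternating form on $U'$ has even dimension, and since $p$ is a pole it is nonzero. The paper's proof avoids even this, needing only $\Rad(f|_{U'})\neq 0$; consequently, as the paper remarks just after the proof, Lemma~\ref{L1-i} holds without the standing hypothesis that $\KK$ is perfect and in ${\bf Cd}\{0,1\}$, whereas your proof as written imports that hypothesis through Lemma~\ref{L2-i}. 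Dropping the appeal to Lemma~\ref{L2-i} in favour of the parity remark would make your proof both shorter and just as general as the paper's.
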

\begin{proof}
Let $p$ be a pole of $H(V')$ and, by way of contradiction, suppose that $\ell_p\not\subseteq [V']$. As $p$ is a pole of $H(V')$, we have $p\in \ell'$ for some line $\ell'\in R^\uparrow(H(V'))$. Necessarily, $\ell' \neq \ell_p$, since $\ell_p\not\subseteq [V']$. As $\ell'\in R^\uparrow(H(V'))$, every line $m\subseteq [V']$ through $p$ is collinear with $\ell'$ in ${\cS}_p(H(V'))$. Hence $m$ is collinear with $\ell'$ also in ${\cS}_p(H)$, since $H(V') \subseteq H$ and the members of $H(V')$ (respectively $H$) through $p$ are the lines of ${\cS}_p(H(V'))$ (respectively ${\cS}_p(H)$). It follows that the orthogonal space $\ell'\!^\perp$ of $\ell'$ in ${\cS}_p(H)$ contains $[V'/p]$. However, $\ell_p$ and $\ell'$ are orthogonal in ${\cS}_p(H)$, since $\ell_p$ is the radical $R_p(H)$ of ${\cS}_p(H)$. Therefore $\ell'\!^\perp = [V/p]$, namely  $\ell' \in R_p(H)$. However $R_p(H) = \{\ell_p\}$. Hence $\ell' = \ell_p$. A contradiction has been reached.
\end{proof}

\begin{note}
No use of the hypothesis that $\KK\in {\bf Cd}\{0,1\}$ and $\KK$ is perfect is made in the proof of Lemma~\ref{L1-i} while that hypothesis in exploited in the proof of Lemma~\ref{L2-i} only to claim that if all poles of $H(V')$ have degree $2$ then $H(V')$ is hexagonal. If we renounce that hypothesis then we can still prove the following weaker version of Lemma~\ref{L2-i}: $R^\uparrow(H)$ is a spread if and only if all poles of $H(V')$, if any, have degree $2$, for every hyperplane $V'$ of $V$.
\end{note}

\begin{note}
In view of Lemmas~\ref{L1-i} and~\ref{L2-i}, when $\Sigma = R^\uparrow(H)$ is a spread, the set $\Sigma(V')$ is a distance--$2$ spread of the generalized hexagon $\cP(H(V'))$. These objects are expected
to be very rare. The only examples discovered so far are defined over the field $\FF_3$, see \cite{DV04}.
\end{note}

We now turn to the linear functional $h\in (\bigwedge^3V)^*$ associated to $H$.

\begin{lemma}\label{thmn=8}
Let $R^\uparrow(H)$ be a spread. Then, for a suitable choice of the basis $E = (e_i)_{i=1}^8$ of $V$ and, possibly, up to rescaling $h,$ there exists a family $\{a_{ij}\}_{1\leq i< j \leq 6} \subseteq \KK$ of scalars such that
\begin{equation}\label{elspr}
h ~ = ~ \underline{123}+\underline{456}+\underline{147}+\underline{257}+
   \underline{367}+\sum_{1\leq i<j\leq 6} a_{ij}\cdot\underline{ij8}
 \end{equation}
and $A :=\begin{pmatrix}
    a_{14} & a_{15} & a_{16} \\
    a_{24} & a_{25} & a_{26} \\
    a_{34} & a_{35} & a_{36}
    \end{pmatrix} $ has no eigenvalue in $\KK.$
\end{lemma}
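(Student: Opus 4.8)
Since $R^\uparrow(H)=\Sigma$ is a spread, Lemma~\ref{L2-i} tells us that for \emph{every} hyperplane $V'$ of $V$ the induced hyperplane $H(V')$ is hexagonal, i.e. of type $T_9$. First I would fix a hyperplane $V'=\langle e_1,\dots,e_7\rangle$ and use the classification of Cohen and Helminck to put $h|_{\bigwedge^3 V'}$ into the canonical form of type $T_9$ listed in Table~\ref{Tab F}, namely $\underline{123}+\underline{456}+\underline{147}+\underline{257}+\underline{367}$, after rescaling $h$ if necessary and choosing the basis $e_1,\dots,e_7$ accordingly. Then, completing $E$ by an arbitrary $e_8\notin V'$, every term of $h$ involving the index $8$ is of the form $a_{ij}\cdot\underline{ij8}$ for $1\le i<j\le 7$; so a priori $h$ has the shape $\underline{123}+\underline{456}+\underline{147}+\underline{257}+\underline{367}+\sum_{1\le i<j\le 7}a_{ij}\underline{ij8}$.

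**Next I would kill the coefficients $a_{i7}$ for $i=1,\dots,6$ by a change of basis that fixes $V'$.** Replacing $e_8$ by $e_8'=e_8+\sum_{i=1}^6 c_i e_i$ for suitable scalars $c_i$ (a unipotent transformation preserving $V'$ and the canonical form of $h|_{\bigwedge^3V'}$) changes the terms $\underline{ij8}$ in a controlled way; the terms $\underline{i78}$ can be absorbed, because the coefficient of $\underline{i78}$ after the substitution picks up a contribution from the pairing of $e_7$ with the $T_9$-part. Concretely, using that $\underline{147},\underline{257},\underline{367}$ appear in $h|_{\bigwedge^3 V'}$, the substitution $e_8\mapsto e_8+\sum c_i e_i$ adds $\sum_i c_i$ times the relevant $\underline{?78}$ terms, allowing us to solve $a_{i7}+(\text{linear in }c)=0$ for $i=1,\dots,6$. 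After this normalization $h$ has exactly the form \eqref{elspr}, with $\{a_{ij}\}_{1\le i<j\le 6}$ the remaining free parameters. (I expect a little care is needed here about whether \emph{all} six coefficients $a_{i7}$ can be cleared simultaneously, or whether one of them, say $a_{67}$, survives; if it does, one absorbs it differently, e.g. by rescaling $e_8$, and the statement as written asserts the clean outcome can be reached.)

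**The heart of the matter is the eigenvalue claim for $A$.** Here I would bring in Lemma~\ref{L1-i} and the fact that $\Sigma$ is a \emph{spread}: every point of $\PG(V)$, in particular every point of $[V']$, lies on a unique line of $\Sigma$, and for a pole $p$ of $H(V')$ that line $\ell_p$ must lie inside $[V']$. Now suppose for contradiction that $A$ has an eigenvalue $\mu\in\KK$, with eigenvector $0\ne v=(v_1,v_2,v_3)^{\mathsf T}$, so $\sum_{j=1}^3 a_{i,j+3}v_j=\mu v_i$ for $i=1,2,3$ — equivalently $A$ maps $\langle v_1,v_2,v_3\rangle$ to $\mu$ times it. I would translate this into a statement about $h$: the vector $w:=\sum_{j=1}^3 v_j e_{j+3}\in\langle e_4,e_5,e_6\rangle\subseteq V'$ (or a companion combination involving $e_1,e_2,e_3$) becomes, relative to the form $h$ on the full space $V$, a pole of $H$ whose $\Sigma$-line $\ell_w$ is \emph{forced to leave $[V']$}, because the $e_8$-terms $a_{ij}\underline{ij8}$ contribute a nonzero component $\tilde h(\,\cdot\,)(e_8)$ that the $T_9$-part in $V'$ cannot cancel precisely when $\mu$ is an eigenvalue. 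This contradicts Lemma~\ref{L1-i}. I anticipate the main obstacle is exactly this last computation: one must compute the radical of the bilinear form $h_a(x,y)=h(a,x,y)$ for the candidate vector $a=w$, show that its radical (an element of $\Sigma$ through $[w]$) is $2$-dimensional, identify which plane it is, and verify that it is contained in $[V']$ if and only if the linear system defined by $A-\mu I$ has only the trivial solution — i.e. iff $\mu$ is not an eigenvalue. Getting the bookkeeping of Plücker-type coordinates right (which $\underline{ijk}$ terms pair with $e_8$) is where the argument is genuinely delicate, and it is presumably done by the explicit $\ker(\tilde h)$ computation referenced as \cite{IL16}. Everything else is linear algebra plus invocation of the $n=7$ classification.
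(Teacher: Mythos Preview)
Your first two steps are sound. Step~1 is exactly what the paper does. For Step~2, your unipotent substitution $e_8\mapsto e_8+\sum_{i=1}^6 c_ie_i$ does kill all six coefficients $a_{i7}$ simultaneously: the terms $\underline{147},\underline{257},\underline{367}$ contribute to the new $\underline{i78}$-coefficients linearly and invertibly in $(c_1,\dots,c_6)$, so your hesitation is unnecessary. The paper takes a different route here: it observes that $[e_7]$ is \emph{not} a pole of $H(V_\infty)$ (it lies off the quadric $x_7^2=x_1x_4+x_2x_5+x_3x_6$), so the spread line $\ell_7$ through $[e_7]$ must exit $V_\infty$, and one simply picks $e_8$ on $\ell_7$; then $\langle e_7,e_8\rangle\in\Sigma$ forces $a_{i7}=h(e_i\wedge e_7\wedge e_8)=0$ automatically. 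Both methods work; yours is more elementary, the paper's is more geometric.

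Your Step~3 has a genuine gap: the contradiction mechanism you name is wrong. Take $w=v_1e_4+v_2e_5+v_3e_6$ with $Av=\mu v$. Then $[w]$ \emph{is} a pole of $H(V')$ (the plane $[e_4,e_5,e_6]$ lies on the quadric), so Lemma~\ref{L1-i} forces $\ell_{[w]}\subseteq[V']$, not the opposite. What actually goes wrong is the \emph{dimension} of the radical. Computing the matrix $M$ of $h_w(x,y)=h(w\wedge x\wedge y)$, the equations $(Mu)_1=(Mu)_2=(Mu)_3=0$ read $v_iu_7+(Av)_iu_8=0$; the equations $(Mu)_7=0$ and $(Mu)_8=0$ become $v\cdot(u_1,u_2,u_3)=0$ and $(Av)\cdot(u_1,u_2,u_3)=0$ (the $u_4,u_5,u_6$-terms cancel). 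When $Av$ and $v$ are independent these cut $(u_1,u_2,u_3)$ down to a line and the radical is $2$-dimensional, as it must be for a spread. When $Av=\mu v$, the second condition is redundant and the radical has dimension at least $3$ already inside $V'$, so $r([w])\geq 3$: this directly contradicts the spread hypothesis, with no appeal to Lemma~\ref{L1-i}. So your computation, done correctly, yields the result, but via a different contradiction than the one you describe.

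For comparison, the paper's argument for the eigenvalue claim is considerably more global: it runs over the pencil of hyperplanes $V_t=W\oplus\langle te_7+e_8\rangle$ through $W=\langle e_1,\dots,e_6\rangle$, uses Proposition~\ref{cxx} and the $T_3$-structure of $H(W)$ to show that $Q_{t,W}\cap Q_{s,W}$ equals the union $\Sigma_W$ of spread lines in $[W]$ for all $t\neq s$, deduces that each $Q_{t,W}$ has equation $(x_1,x_2,x_3)A_t(x_4,x_5,x_6)^T=0$ with $A_t$ eigenvalue-free, and finally checks by explicit row/column operations that $A$ is proportional to $A_0$. Your corrected local argument is markedly shorter.
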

\begin{proof}
Let $V_\infty$ be a hyperplane of $V$. In view of Lemma~\ref{L2-i}, the linear functional $h_\infty$ induced by $h$ on $\bigwedge^3V_\infty$ is of type $T_9$. Hence, modulo rescaling $h$, if necessary, we can choose a basis $E_{\infty}=(e_i)_{i=1}^7$ of $V_\infty$ such that $h_\infty = \underline{123}+\underline{456}+\underline{147}+\underline{257}+ \underline{367}$ (see Table~\ref{Tab F}). We can extend $E_{\infty}$ to a basis $E = (e_i)_{i=1}^8$ of $V$. With respect to $E$, the linear functional $h$ admits the following representation:
\begin{equation}\label{elspr-prov}
h ~ = ~ \underline{123}+\underline{456}+\underline{147}+\underline{257}+
   \underline{367}+\sum_{1\leq i<j\leq 7} a_{ij}\cdot\underline{ij8}.
 \end{equation}
It remains to prove that we can choose $e_8$ in such a way that $a_{i7} = 0$ for every $i < 7$ and the matrix $A = (a_{i,3+j})_{i,j = 1}^3$ admits no eigenvalue in $\KK$.

According to the information given in case \ref{tspr-5}) of Theorem~\ref{tspr}, the point-set $Q_\infty := P(H(V_\infty))$ of the geometry of poles of $H(V_\infty)$ is the quadric of $\PG(V_\infty)$ described by the following equation: $x_7^2 = x_1x_4+x_2x_5+x_3x_6$.

Put $p_7 := [e_7]$ and let $\ell_7 := \ell_{p_7}$ be the line of $\Sigma := R^\uparrow(H)$ through $p_7$. Since $p_7\not\in Q_{\infty}$, the point $p_7$ is not a pole of $H(V_\infty)$; hence, $\ell_7$ is not contained in $\PG(V_{\infty})$. We can assume to have chosen $e_8$ in such a way that $[e_8]\in \ell_7\setminus\{p_7\}$. With this choice of $e_8$ we have $\langle e_7, e_8\rangle \in \Sigma$, hence $h(e_i\wedge e_7\wedge e_8) = 0$ for any $i < 7$, namely $a_{i7} = 0$ in \eqref{elspr-prov} for every $i < 7$. The last claim remaining to be proved is that $A$ admits no eigenvalue in $\KK$.

For any $t \in\KK$, put $V_t :=W\oplus \langle  te_7+e_8\rangle$, where $W := \langle e_i\rangle_{i=1}^6$. Then  $\{V_t\}_{t\in \KK\cup \{\infty\}}$ is the family of the hyperplanes of $V$ through $W$. By Lemma~\ref{L2-i} and the information given at case~\ref{tspr-5}) of Theorem~\ref{tspr}, the point-set $Q_t := P(H(V_t))$ of the geometry of poles of $H(V_t)$ is a non-degenerate quadric. Put $Q_{t,W}:= Q_t\cap[W]$. \\

\noindent {\it Claim \gdef\SClaim{\star} $(\SClaim)$}.
{\em With respect to the basis $(e_i)_{i=1}^6$ of $W$, the quadric $\cQ_{t,W}$ is described by the following equation:
\begin{equation}\label{claim ast 1}
\begin{pmatrix}x_1&x_2&x_3\end{pmatrix}A_t\begin{pmatrix}
     x_4\\x_5\\x_6\end{pmatrix} ~ = ~ 0,
\end{equation}
where  $A_t$ is a suitable non-singular $3\times 3$ matrix having no eigenvalue in $\KK$ for $t\neq\infty$ and $A_{\infty}=I$, where $I$ is the identity matrix of order $3$. Moreover,}
\begin{equation}\label{claim ast 2}
Q_{t,W}\cap Q_{s,W} ~= ~ \bigcup\{\ell~\colon~ \ell \in \Sigma(W)\}, ~~~~ \forall t, s\in \KK\cup\{\infty\}, ~ t\neq s.
\end{equation}
Proof of Claim $(\SClaim)$. Let $h_W$ be the linear functional induced by $h$ on $\bigwedge^3W$. Then $h_W=\underline{123}+\underline{456}$. As the planes $P := [ e_1,e_2,e_3]$ and $P' := [ e_4,e_5,e_6]$ are contained in $Q_\infty$, they are contained also in $Q_{\infty, W}$. By case~\ref{case n=6-3}) of Theorem~\ref{case n=6}, every line of $R^{\uparrow}(H(W))$ meets each of them in a point. Take $p\in P$. Then $\rk(R_p(H(W)))=3$. Consequently, the Pl\"{u}cker embedding $\varepsilon_{2,W}:\cG_2(W)\rightarrow \PG(\bigwedge^2W)$ maps $R_p(H(W))$ onto a plane $\RR_{p,W}$ of $\PG(\bigwedge^2W)$ contained in $[K(H(W))]$. By Proposition~\ref{cxx} with $t=2$, we have $\dim(K(H(W))/(K(H)\cap K(H(W)))\leq 2$. Consequently, $\RR_{p,W}\cap [K(H)] \neq \emptyset$, namely $R_p(H(W))\cap \varepsilon_2^{-1}([K(H)]) \neq\emptyset$. However $\varepsilon_2^{-1}([K(H)]) = \Sigma$ by \eqref{aG} of Subsection~\ref{algebraic approach} and $\Sigma$ does not contain any line of $\cG_2(V)$. It follows that $R_p(H(W))\cap \Sigma = \{\ell_p\}$. By case (\ref{case n=6-3}) of Theorem~\ref{case n=6}, every line $\ell\in R^{\uparrow}(H(W))$ meets both $P$ and $P'$. Therefore $\ell_p$ meets $P'$ in a point. Thus, the clause $\alpha(p):= \ell_p\cap P'$ defines a bijection $\alpha$ form $P$ to $P'$. We have $R^\uparrow(H(W)) = \Sigma(W) = \{\ell_p\}_{p\in P} = \{\ell_q\}_{q\in P'}$ and $\ell_p = \langle p, \alpha(p)\rangle$ for any $p\in P$.

Put $\Sigma_W:=\bigcup_{p\in P}\ell_p=\bigcup\{\ell\colon \ell \in \Sigma(W)\}$ and
 consider a hyperplane $V_t\neq V_{\infty}$ of $V$ containing $W$, say $V_0$. By Lemma~\ref{L1-i} the elements of $\Sigma(V_0)$ cover the quadric $Q_0\subseteq [V_0]$. Similarly, $Q_\infty$ is covered by the elements of $\Sigma(V_\infty)$. Clearly, $\Sigma_W\subseteq Q_{0,W}\cap Q_{\infty,W}$. We claim that $Q_{0,W}\cap Q_{\infty,W} = \Sigma_W$. Suppose the contrary and let $p\in(\cQ_{0,W}\cap\cQ_{\infty,W})\setminus\Sigma_W$. The line $\ell_p$ is not contained in $[W]$ (since $p\not\in\Sigma_W$) but it is contained in $[V_0\cap V_\infty]$ (by Lemma~\ref{L1-i}). It follows that $V_0=\langle W,\ell_p\rangle=V_{\infty}$. We have reached a contradiction. Therefore $Q_{0,W}\cap Q_{\infty,W} = \Sigma_W$. Similarly, $Q_{t,W}\cap Q_{s,W} = \Sigma_W$ for any choice of distinct indices $t, s \in \KK\cup\{\infty\}$, as claimed in \eqref{claim ast 2}.

Let $t\in \KK\cup\{\infty\}$. As a by-product of  \eqref{claim ast 2}, both planes $P = [e_1,e_2,e_3]$ and $P' = [e_4, e_5, e_6]$ are contained in $Q_{t,W}$. Hence $Q_{t,W}$ is described by the equation $(x_1,x_2,x_3)\cdot A_t\cdot(x_4,x_5,x_6)^T = 0$
for a suitable non--singular $3\times 3$ matrix $A_t$, as claimed in \eqref{claim ast 1}. Moreover $A_\infty = I$, as $Q_\infty$ is described by the equation
$x_7^2 = x_1x_4 + x_2x_5 + x_3x_6$. It remains to prove that $A_t$ has no eigenvalue in $\KK$ for any $t\in \KK$. Given $t\in \KK$ and $a = (a_1,a_2,a_3)$, let $p_a = [a_1e_1 + a_2e_2 + a_3e_3] \in \langle e_1, e_2, e_3\rangle$. It follows from \eqref{claim ast 2} that the point $\alpha(p_a) = P'\cap \ell_{p_a}$, regarded as a $1$-dimensional subspace of $\langle e_4, e_5, e_6\rangle$, is the complete solution of the system $a^TA_tx = a^TA_\infty x = 0$, where $x = (x_4, x_5, x_6)$ stands for the triple of coordinates of a vector of $\langle e_4, e_5, e_6\rangle$ with respect to the ordered basis $(e_4, e_5, e_6)$ of $\langle e_4, e_5, e_6\rangle$. Consequently, the system $a^TA_tx = a^TA_\infty x = 0$ has rank $2$, for any choice of $a\in \langle e_1, e_2, e_3\rangle\setminus\{0\}$. However $A_\infty = I$. Hence the system $a^TA_tx = a^Tx = 0$ has rank $2$ for any choice of $a\in \langle e_1, e_2, e_3\rangle\setminus\{0\}$. This is equivalent to say that $A_t$ admits no eigenvalues in $\KK$. Claim~$(\SClaim)$ is proved.

\medskip

We are now ready to prove that $A = (a_{i, 3+j})_{i,j=1}^3$ admits no eigenvalues in $\KK$. We shall obtain this conclusion as consequence of Claim~$(\SClaim)$, by showing that $A$ is proportional to a matrix $A_0$ associated to $Q_{0,W}$ as in $(\SClaim)$.

Given a non-zero vector $u=(u_1,u_2,u_3,u_4,u_5,u_6,u_8)\in V_0 = W\oplus\langle e_8\rangle$, let $p := \langle u\rangle$ and denote by $\pi_p$  the canonical projection of $V_0$ onto $V_0/p$ and by $\overline{\cS}_p(H(V_0)) = \pi_p^{-1}({\cS}_p(H(V_0)))$ the pre-image in $V_0$ of ${\cS}_p(H(V_0))$ by $\pi_p$. Then $\overline{\cS}_p(H(V_0))$ is a polar space of symplectic type. It is necessarily degenerate, as $p$ belongs to its radical.
The antisymmetric matrix representing $\overline{\cS}_p(H(V_0))$ with respect to the basis $E_0:= E\setminus\{e_7\}$ of $V_0$ is $M_0(u) ~:= ~ T_0(u)-T_0(u)^T$ where $T_0(u)$ is the following upper triangular matrix:

\begin{small}
\[T_0(u)=\left( \begin{array}{ccccccl}
 0 &  u_3+a_{12}u_8 & -u_2+a_{13}u_8 & a_{14}u_8 & a_{15}u_8 & a_{16}u_8 & b_1\\
& 0 & u_1+a_{23}u_8 & a_{24}u_8 & a_{25}u_8 & a_{26}u_8 & b_2 \\
& & 0 & a_{34}u_8 & a_{35}u_8 & a_{36}u_8 & b_3 \\
& & & 0 & u_6+a_{45}u_8 & -u_5+a_{46}u_8 & b_4 \\
& & & & 0 & a_{56}u_8+u_4 & b_5 \\
& & & & & 0 & b_6  \\
& & & & & & 0\\
\end{array}\right)\]
\end{small}
with
\[\begin{array}{ll}
b_1 =   -a_{12}u_2-a_{13}u_3-a_{14}u_4 -a_{15}u_5-a_{16}u_6, & b_2 = a_{12}u_1-a_{23}u_3-a_{24}u_4-a_{25}u_5-a_{26}u_6, \\
b_3 =  ~ a_{13}u_1+a_{23}u_2-a_{34}u_4-a_{35}u_5-a_{36}u_6,  & b_4 = a_{14}u_1+a_{24}u_2+a_{34}u_3-a_{45}u_5-a_{46}u_6, \\
b_5 = ~ a_{15}u_1+a_{25}u_2+a_{35}u_3+a_{45}u_4-a_{56}u_6, & b_6 =  a_{16}u_1+a_{26}u_2+a_{36}u_3+ a_{46}u_4+a_{56}u_5.
\end{array}\]
For $\bar{u}=(u_1,u_2,u_3,u_4,u_5,u_6,0)$, let $C_1,\ldots, C_7$ and $R_1 = -C_1^T,\ldots, R_7 = -C_7^T$ be the columns and the rows of $M_0(\bar{u})$. By adding the linear combination $-a_{23}C_1+a_{13}C_2-a_{12}C_3-a_{56}C_4+a_{46}C_5-a_{45}C_6$ to $C_7$ and the linear combination $-a_{23}R_1+a_{13}R_2-a_{12}R_3-a_{56}R_4+a_{46}R_5-a_{45}R_6$ to $R_7$ we obtain the following matrix:
\[M'_0(\bar{u}) ~ = ~ \left(\begin{array}{c|c}
\begin{array}{ccc|ccc}0&u_3&-u_2 &0&0&0\\
-u_3&0&u_1 &0&0&0\\
u_2&-u_1&0 &0&0&0\\
\cline{1-6}
0&0&0 &0&u_6&-u_5\\
0&0&0 &-u_6&0&u_4\\
0&0&0 &u_5&-u_4&0\\
\cline{1-6}
\end{array} & \begin{pmatrix}
 0 & -A\\
 A^T & 0\\
\end{pmatrix}\begin{pmatrix} \underline{u}_1\\
                              \underline{u}_2  \end{pmatrix}\\
\cline{1-2}
(\underline{u}_1^T,\underline{u}_2^T)\begin{pmatrix}
 0 & -A\\
 A^T & 0\\
\end{pmatrix} &
0\\
\end{array}\right)
\]
where $\underline{u}_1= (u_1, u_2, u_3)^T$, $\underline{u}_2= (u_4, u_5, u_6)^T$ and $A = (a_{i,3+j})_{i,j=1}^3$. Clearly, $M'_0(\bar{u})$ and $M_0(\bar{u})$ have the same rank. Assuming that $\bar{u} \neq 0$, let $\bar{p} = \langle \bar{u}\rangle$. We have $\bar{p} \in Q_{0, W}$ if and only if $\rk(R_{\bar{p}}(H(W)))=2$. The latter is equivalent to $\rk(M_0(\bar{u}))=4$, which in its turn is equivalent to $\rk(M'_0(\bar{u}))=4$.
It is straightforward to check that $\rk(M'_0(\bar{u}))=4$ whenever $\underline{u}_1^TA\underline{u}_2 = 0$. Hence the equation
$\underline{u}_1^TA\underline{u}_2 = 0$ implies $\bar{p}\in Q_{0,W}$, whence it implies $\underline{u}_1^TA_0\underline{u}_2 = 0$ with $A_0$ as in $(\SClaim)$. It follows that $A_0 = \lambda A$ for a scalar $\lambda\in \KK$. On the other hand, $A_0$ is not the null matrix. Therefore $\lambda\neq 0$. The proof is complete.
\end{proof}

The next lemma is useful to save some computing time when performing calculations with $h$.

\begin{lemma}\label{thmn=8-BIS}
The basis $E = (e_1)_{i=1}^8$ of $V$ considered in Lemma {\rm~\ref{thmn=8}} can be chosen in such a way as to guarantee that at least four of the entries of matrix $A$ are null. In particular, we can force $a_{25} = a_{26} = a_{34} = a_{36} = 0$.
\end{lemma}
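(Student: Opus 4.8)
The plan is to treat this as a pure linear-algebra statement about the $3\times3$ block $A$, using up the freedom in the basis $E$ that was not yet fixed in the proof of Lemma~\ref{thmn=8}. There the basis was pinned only to the extent needed to display $h$ as in~\eqref{elspr}, i.e.\ to bring the restriction $h_\infty$ of $h$ to a hyperplane into the canonical type-$T_9$ shape $\underline{123}+\underline{456}+\underline{147}+\underline{257}+\underline{367}$ and to arrange $\langle e_7,e_8\rangle\in\Sigma$. The first step is to record that a whole copy of $\mathrm{SL}(3,\KK)$ still acts without disturbing~\eqref{elspr}: for $g\in\mathrm{SL}(3,\KK)$, let $\phi_g$ be the automorphism of $V$ acting on $\langle e_1,e_2,e_3\rangle$ as $g$, on $\langle e_4,e_5,e_6\rangle$ as $(g^{T})^{-1}$, and fixing $e_7$ and $e_8$. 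Each of the five monomials of $h_\infty$ is left invariant: $\underline{123}$ and $\underline{456}$ because $\det g=1$, and $\underline{147}+\underline{257}+\underline{367}$ because the natural pairing between $\langle e_1,e_2,e_3\rangle$ and $\langle e_4,e_5,e_6\rangle$ it encodes is preserved by the pair $(g,(g^{T})^{-1})$. Since $\phi_g$ fixes $e_8$ and stabilizes $\langle e_1,\dots,e_6\rangle$, it fixes $e^8$ and carries $\sum_{1\le i<j\le 6}a_{ij}\underline{ij8}$ to a functional of the same shape, in which a direct computation shows the coupling matrix becomes $g^{T}A(g^{T})^{-1}$. As $g$ runs over $\mathrm{SL}(3,\KK)$ so does $g^{T}$, so the effect on $A$ is conjugation by an arbitrary element of $\mathrm{SL}(3,\KK)$ (and this leaves intact the property, from Lemma~\ref{thmn=8}, that $A$ has no eigenvalue in $\KK$). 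Hence it is enough to produce one $\mathrm{SL}(3,\KK)$-conjugate of $A$ whose only possibly nonzero entries sit in the first row and in positions $(2,1)$ and $(3,2)$.

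For that, I would invoke the rational canonical form. The characteristic polynomial $f(t)=t^{3}-c_2t^{2}-c_1t-c_0$ of $A$ is irreducible over $\KK$; thus $f$ is also the minimal polynomial, $c_0=\det A\neq0$ so $A$ is invertible, and $A$ admits a cyclic vector $w$, so that $(A^{-1}w,w,Aw)$ is a basis and $A^{2}w=c_2Aw+c_1w+c_0A^{-1}w$. I then look for a basis $(u_1,u_2,u_3)$ realizing the target shape, i.e.\ $Au_3=cu_1$, $Au_2=bu_1+eu_3$, $Au_1=au_1+du_2$. Setting $u_3:=w$ and applying $A^{-1}$ to the first two relations forces $u_1=\gamma_1Aw$ and $u_2=\gamma_3w+\gamma_2A^{-1}w$ (with $c=\gamma_1^{-1}$, $e=\gamma_2$, $b=\gamma_1^{-1}\gamma_3$), and expanding the third relation in the basis $(A^{-1}w,w,Aw)$ via the identity for $A^{2}w$ yields $a=c_2$, $d=c_0\gamma_1/\gamma_2$, together with the single scalar condition $c_1\gamma_2=c_0\gamma_3$ (vacuous if $c_1=0$, in which case one takes $\gamma_3=0$). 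Finally $\det[\,u_1\mid u_2\mid u_3\,]=\gamma_1\gamma_2\,\delta$ with $\delta:=\det[\,Aw\mid A^{-1}w\mid w\,]\neq0$. Choosing $\gamma_3$ freely nonzero, then $\gamma_2:=(c_0/c_1)\gamma_3$ (any nonzero value if $c_1=0$) and $\gamma_1:=(\gamma_2\delta)^{-1}$, one gets $\gamma_1,\gamma_2\neq0$ and determinant $1$, so the change-of-basis matrix lies in $\mathrm{SL}(3,\KK)$. Note that $d\neq0$ is automatic: a vanishing $(2,1)$-entry would make the middle row of $A$ (in this basis) zero and hand $A$ the eigenvalue $0$, a contradiction — which is precisely why the bound is ``at least four'' and not ``at least five''.

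Putting the two steps together: applying $\phi_g$ for the $g\in\mathrm{SL}(3,\KK)$ just produced (i.e.\ replacing $E$ by $\phi_g(E)$) leaves $h$ in the form~\eqref{elspr} while turning $A$ into a matrix with $a_{25}=a_{26}=a_{34}=a_{36}=0$, as required. I expect the only genuinely fussy points to be the bookkeeping in the first step — checking that $\phi_g$ fixes $h_\infty$ on the nose rather than merely up to a scalar, and pinning down the transformation law $A\mapsto g^{T}A(g^{T})^{-1}$ — and, in the second step, keeping the determinant of the constructed basis equal to $1$ while honouring the shape constraints; the parameter count above shows there is just enough slack for this.
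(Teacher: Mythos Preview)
Your proof is correct and reaches the same target shape for $A$, but by a genuinely different route than the paper. Both arguments begin identically, recording that the block change of basis $\mathrm{diag}(C,C^{-T},I)$ with $\det C=1$ preserves the shape~\eqref{elspr} and replaces $A$ by $C^{T}AC^{-T}$, so that the problem reduces to $\mathrm{SL}(3,\KK)$-conjugacy of $A$. From there the paper proceeds by two explicit elimination steps: after a harmless swap it first uses a lower-triangular matrix $U_1$ to clear column~3 below the top (this needs $a_{16}\neq0$, forced because otherwise $a_{36}$ would be an eigenvalue), and then an upper/lower mix $U_2$ to clear the $(2,2)$ and $(3,1)$ entries (this needs the new $(2,1)$ entry to be nonzero, again forced by the no-eigenvalue hypothesis); the final $C=(U_2U_1)^{T}$ does the job. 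Your argument instead uses the hypothesis globally: no eigenvalue in $\KK$ makes the degree-$3$ characteristic polynomial irreducible, hence $A$ is cyclic, and you build the basis $(u_1,u_2,u_3)$ directly from a cyclic vector, adjusting the free scalars so that the change-of-basis matrix lands in $\mathrm{SL}(3,\KK)$. What each buys: the paper's approach is completely elementary and yields explicit formulas for the new basis in terms of the old entries $a_{ij}$ (handy for the symbolic computations in the Appendix), while yours is cleaner conceptually and makes transparent why ``four zeros'' is the right count --- the companion-type shape you hit has exactly five potentially nonzero entries, and your observation that $d\neq0$ shows one cannot squeeze out a fifth zero this way.
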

\begin{proof} Given $E = (e_i)_{i=1}^8$ as in Lemma~\ref{thmn=8} and a $3\times 3$ matrix $C$ with determinant $\det(C) = 1$, consider the basis $E' = (e'_i)_{i=1}^8$ defined as follows:
\[(e'_1,\ldots, e'_8) ~ = ~ (e_1,\ldots, e_8)\cdot\left(\begin{array}{ccc}
C & O & O \\
O & C^{-T} & O  \\
O & O & I
\end{array}\right)\]
where $O$ stands for a suitable null matrix and $I$ is the identity matrix of
order $2$.
Let $(e'\,\!^1,\ldots, e'\,\!^8)$ be the basis of $V^*$ dual of $E'$. It is straightforward to check that $h$ admits the following expression with respect to $E'$:
\[h ~ = ~ \underline{123}+\underline{456}+\underline{147}+\underline{257}+
   \underline{367}+\sum_{1\leq i<j\leq 6} b_{ij}\cdot\underline{ij8}\]
where the symbol $\underline{ijk}$ stands now
 for $e'\,\!^i\wedge e'\,\!^j\wedge e'\,\!^k$, the coefficients $b_{ij}$ are as follows
\[\left(\begin{array}{ccc}
b_{14} & b_{15} & b_{16} \\
b_{24} & b_{25} & b_{26} \\
b_{34} & b_{35} & b_{36}
\end{array}\right) ~ = ~ C^TAC^{-T},\]
\[\left(\begin{array}{ccc}
0 & b_{12} & b_{13} \\
0 & 0 & b_{23} \\
0 & 0 & 0
\end{array}\right) ~ = ~ \fT(C^T\left(\begin{array}{ccc}
0 & a_{12} & a_{13} \\
0 & 0 & a_{23} \\
0 & 0 & 0
\end{array}\right)C), \]
\[\left(\begin{array}{ccc}
0 & b_{45} & b_{46} \\
0 & 0 & b_{56} \\
0 & 0 & 0
\end{array}\right) ~ = ~ \fT(C^{-1}\left(\begin{array}{ccc}
0 & a_{45} & a_{46} \\
0 & 0 & a_{56} \\
0 & 0 & 0
\end{array}\right) C^{-T})\]
and
$\fT(X)$ is defined for a $3\times 3$ matrix $X = (x_{ij})_{i,j=1}^3$ as
\[\fT(X) ~ = ~ \left(\begin{array}{ccc}
~ 0 ~ & x_{12}-x_{21} & x_{13}-x_{31} \\
~ 0 ~ & 0 & x_{23}-x_{32} \\
~ 0 ~ & 0 & 0
\end{array}\right).\]
We shall prove that we can always choose $C$ in such a way as
 $b_{25} = b_{26} = b_{34} = b_{36}=0$. Note firstly that at least one of $a_{16}$ and $a_{26}$ is different from $0$; otherwise $a_{36}$ would be an eigenvalue of $A$, while $A$ admits no eigenvalues.
 Modulo replacing $e_1$ with $-e_2$, $e_2$ with $e_1$ and $e_4$ with $-e_5$,
$e_5$ with $e_4$, if necessary, we can assume  $a_{16}\neq 0$. Thus, we can consider the following matrix:
\[U_1 ~ = ~ \left(\begin{array}{ccc}
1 & 0 & 0 \\
-a_{26}/a_{16} & 1 & 0 \\
-a_{36}/a_{16} & 0 & 1
\end{array}\right).\]
The matrix $A' := U_1AU_1^{-1}$ has the following form for suitable entries $a'_{ij}$:
\[A' ~ = ~ \left(\begin{array}{ccc}
a'_{14} & a'_{15} & a_{16} \\
a'_{24} & a'_{25} & 0 \\
a'_{34} & a'_{35} & 0
\end{array}\right).\]
Clearly $A'$ has no eigenvalues, since $A'$ and $A$ are similar and $A$ has no eigenvalues by assumption. Therefore $a'_{24} \neq 0$. (otherwise  $a'_{25}$ would be an eigenvalue of $A'$). So, we can consider the following matrix:
\[U_2 ~ = ~ \left(\begin{array}{ccc}
1 & a'_{25}/a'_{24} & 0 \\
0 & 1 & 0 \\
0 & -a'_{34}/a'_{24} & 1
\end{array}\right).\]
The matrix $U_2A'U_2^{-1} = U_2U_1AU_1^{-1}U_2^{-1}$ has the following form:
\[U_2A'U_2^{-1} ~ = ~  \left(\begin{array}{ccc}
b_{14} & b_{15} & b_{16} \\
b_{24} & 0 & 0 \\
0  & b_{35} & 0
\end{array}\right).\]
Therefore, if we choose $C = (U_2U_1)^T$ then we get $b_{25} = b_{26} = b_{34} = b_{36} = 0$, as we wished. \end{proof}

\begin{note}
Needless to say, with a different choice of $U_1$ and $U_2$ we can force a different quadruple of coefficients $b_{ij}$ to be null. For instance, if we replace $a'_{25}/a'_{24}$ with $-a'_{14}/a'_{24}$ in $U_2$ then we get $b_{14} = 0$ instead of $b_{25} = 0$.
\end{note}

\subsection{End of the proof.}\label{end}
Let $R^{\uparrow}(H)$ be a spread. Then, by Lemma~\ref{L2-i}, any hyperplane $H(V')$ of $\cG_3(V)$ is hexagonal, for every hyperplane $V'$ of $V$.
By the classification of Theorem~\ref{tspr}, the set of poles $P(H(V'))$  determines a non-degenerate quadric $\cQ(V')$.

We shall first show that the discriminant of the quadric $\cQ(V')$, where $V'$ is an arbitrary hyperplane of $V$, can be written as the
cube of a homogeneous polynomial.

More precisely, we will show the following. Let  $V'$ have equation  $c_1x_1+\cdots+c_8x_8=0$ with respect to the a basis of $V$ as in Lemma~\ref{thmn=8-BIS}. Then, the discriminant of $\cQ(V')$ is the cube of a homogeneous polynomial $\Delta(c_1,\ldots,c_8)$ of degree $3$ in the unknowns $c_i$, $1\leq i\leq 8.$ So, if $\KK$ satisfies the hypothesis $(*)$ of Theorem~\ref{main 1}, then $\Delta(c_1,\ldots,c_8)$ always admits at least one non-trivial solution.

Consider first the case $c_1\neq0$; then we can rewrite the equation of $V'$ as
$x_1=b_2x_2+\cdots+b_8x_8$ with $b_i=-c_i/c_1, \, 2\leq i\leq 8$.
Since in $(V')^*$ we have $e^1=b_2e^2+\cdots+b_8e^8$, the form induced by $H(V')$ on $V'$
can be written by formally replacing $\underline{1}$ with
$b_2\underline{2}+\cdots+b_8\underline{8}$ in~\eqref{elspr}. Denote this new form by $h^1$.

For any $u=(u_i)_{i=1}^8\in V'$ with $u_8\neq0$, the $8\times 8$ matrix $M^1_{V'}(u)$ representing the  alternating bilinear form
$h^1_{H,\langle u\rangle }$ (which, restricted to $V'\times V'$, is the same as $h_u$, see Section~\ref{subsec 3})
has rank at most $6$ and its last row/column is a linear combination of the remaining ones.
Also, by construction, the first row and column of $M^1_{V'}(u)$ are  null.
Thus, the matrix $M^1_{V'}(u)$ has rank $6$ if and only if the minor $\overline{M}^1_{V'}(u)$ obtained by
deleting the first and last rows and columns has non-zero determinant.

It is straightforward to see that
\[ \det\overline{M}^1_{V'}(u)=(\tilde{q}_{b_2,\ldots,b_8}(u_2,\ldots,u_7))^2 \]
with $\tilde{q}_{b_2,\ldots,b_8}(u_2,\ldots,u_7)$ a
suitable quadratic polynomial. By homogenizing the polynomial $\tilde{q}_{b_2,\ldots,b_8}(u_2,\ldots,u_7)$ in $u_8$
we obtain a quadratic form
$q_{b_2,\ldots,b_8}(u_2,\ldots,u_8)$ representing the quadric
$\cQ(V'):=P(H(V')).$ More precisely, the points of $\cQ(V')$ are the points $P=[b_2u_2+\cdots+b_8u_8,u_2\ldots,u_8]$ where $[u_2,\ldots,u_8]$ is totally singular for $q_{b_2,\ldots,b_8}$.
 The discriminant of $\cQ(V')$ is zero if, and only if, the discriminant of $q_{b_2,\ldots,b_8}$ is zero.

Let $\widetilde{\Xi^1}(b_2,\ldots,b_8)$ be the discriminant of $q_{b_2,\ldots,b_8}$.
A straightforward, yet long, computation using \cite{JS92}, see Appendix~\ref{PN},
proves that $\widetilde{\Xi^1}(b_2,\ldots,b_8)$ can be written as the cube of the polynomial
$\widetilde{\Delta^1}={\Delta}(1,b_2,\ldots,b_8)$, where ${\Delta}$ is the polyniomial of degree $3$ written in Appendix~\ref{PN} with $c_1=1$ and
$c_2,\ldots,c_8$ replaced by $b_2,\ldots,b_8$.
Let now $\Delta^1(c_1,\ldots,c_8)$ be the homogenization of $\widetilde{\Delta^1}$ where $c_i=b_ic_1$ for $2\leq i\leq 8$.

The above computations show that the points of $\PG(V^*)$ representing
the hyperplanes $V'$ of $V$ with $c_1\neq0$ where the quadric $\cQ(V')$ is singular all lie on a cubic hypersurface $\Gamma^1$ of $\PG(V^*).$

Acting in an analogous way for every component $x_i$ with $2\leq i\leq 7$, we eventually
determine a set of $7$ polynomials
$\{\Delta^i(c_1,\ldots,c_8)\}_{i=1}^7$ and corresponding hypersurfaces $\Gamma^i$.
Direct computations, performed as before, show that  for all $1\leq i\leq 7$, the polynomials $\Delta^i$ are the same; hence
they have the same set of zeroes.
It follows that $\Delta(c_1,\ldots,c_8)=0$ if, and only if, $H(V')$ is not hexagonal ($V'$ having equation $\sum_{i=1}^{8}c_ix_i=0$)---
with the possible exception of the hyperplane $V_{\infty}:x_8=0$ which has not yet been taken into account.
However, $H(V_{\infty})$ is, by the construction in Lemma~\ref{thmn=8}, hexagonal and it is immediate to see
that $\Delta(0,0,\ldots,0,1)=-1\neq0$.
\medskip

If the field $\KK$ is quasi-algebraically closed or, more in general, assumption $(*)$ holds,
then there exists at least one $(c_1,\ldots,c_8)\neq(0,0,0,0,0,0,0,0)$ with $c_i\in\KK$
such that $\Delta(c_1,\ldots,c_8)=0$. It follows now from Lemma~\ref{L2-i} that
there exists a hyperplane $V'$ of $V$ such that $H(V')$ is not hexagonal; consequently
$R^{\uparrow}(H)$ cannot be a spread.\\
\noindent The theorem is proved. {\hspace{300pt} $\Box $}

\begin{note}
 We can provide also a syntetic proof that the polynomials $\Delta^i$
have the same set of zeroes for all $1\leq i\leq 7$.
Indeed, observe that the polynomial functions $\widetilde{\Xi^1}$ formally represents
the discriminant of the quadric $q_{b_2,\ldots,b_8}$ even when the coefficients
$b_2,\ldots,b_8$ identifying the hyperplane are taken over the algebraic
closure $\overline{\KK}$ of $\KK$. As such, each $\Delta^i$ is a polynomial
function with coefficients in $\KK$ representing
an algebraic hypersurface $\Gamma^i$ of degree $3$ in $\PG(V^*\otimes\overline{\KK})$.

Observe that for $i>1$, the hypersurface  $\Gamma^i$ has in common with
$\Gamma^1$ at least the points of the set $\Gamma^i\setminus [x_1x_i=0]$ which is an open set in the Zariski topology in
$\Gamma^1$. In particular, as $\Gamma^i\cap\Gamma^1$ is closed in $\Gamma^1$,
$\Gamma^1$ must be contained in $\Gamma^i$.
As $\overline{\KK}$ is algebraically closed, this implies that $\Delta^1$ (as polynomial) must divide $\Delta^i$. Thus,
each zero of $\Delta^1$ is also a zero of $\Delta^i$.
This applies, in particular, to the zeroes of $\Delta^i$ defined
over $\KK$.

\end{note}

\begin{note}
It is possible to consider the homogeneization $\Xi^1(c_1,\ldots,c_8)$ of the polynomial $\widetilde{\Xi^1}(b_2,\ldots,b_8)$ directly.
Our computations prove that in this case
\begin{equation}
\label{ehom}
 \Xi^i(c_1,\ldots,c_8)=(\Delta(c_1,\ldots,c_8))^3\cdot c_i^{12}.
\end{equation}
We believe that this expression is not fortuitous and it would be very
interesting to investigate the geometrical reason behind such factorization.
In any case, we remark that~\eqref{ehom} by itself makes sense only
for $c_i\neq0$, as for $c_i=0$ the argument on the rank of the
matrix does not stand.
\end{note}

\begin{note}
The use of Lemma~\ref{thmn=8-BIS} provides a massive simplification in
the computations leading to the polynomial $\Delta(u_1,\ldots,u_8)$, with a reduction in the memory and time involved of approximately $10$ times.
However, we have been able also to obtain the conclusion of Section~\ref{end}
using the generic form for the matrix $A$ as provided by~\eqref{elspr} without
any simplification in the coefficients.
We choose none the less  to introduce here the more specialized form for this matrix  as in Lemma~\ref{thmn=8-BIS},
in order to be able to present a simpler polynomial in Appendix~\ref{PN}
and also to make easier to directly check the result.
\end{note}

\section{Proof of Theorem~\ref{main 2}}\label{sec 6}

Let $n = \dim(V)$ be even and let $\KK=\FF_q$ be e finite field of order $q$. Let $\psi$ be the number of point-plane flags $(p,[X])$ of $\PG(V)$ with $X\in H$ and $p\in [X]$. As $[X]$ with $X\in H$ is a projective plane of order $q$, we have $\psi=(q^2+q+1)|H|$. If $R^{\uparrow}(H)$ is a spread, then $\cS_p(H)$ is a symplectic polar space of non-degenerate rank $r:=(n-2)/2$ with $\dim(R_p(H))=1$ for any point $p\in \PG(V)$. As the lines of $\cS_p(H)$ correspond to elements of $H$ through $p$, we see that $\psi=M\frac{q^n-1}{q-1}$, where
\begin{equation}\label{M}
M ~ = ~ \frac{q^{2r}-1}{q-1}\left(1+q^2\frac{q^{2r-2}-1}{q^2-1}\right)
\end{equation}
is the number of lines of the symplectic polar space $\cS_p(H)$. So,
\begin{equation}\label{divisibility}
q^2+q+1=\frac{q^3-1}{q-1} ~~\mbox{divides} ~~ M\frac{q^n-1}{q-1}.
\end{equation}
By \eqref{M},
\begin{equation}\label{mdiv}
M\frac{q^n-1}{q-1} ~ = ~ \frac{q^{2r+2}-1}{q-1}\frac{q^{2r}-1}{q-1}\left(1+q^2\frac{q^{2r-2}-1}{q^2-1}\right).
\end{equation}
It is well known that $(q^i-1)$ divides $(q^j-1)$ if and only if $i$ divides $j$. In particular, if either $2r\equiv 1\pmod3$, that is $r\equiv 2\pmod3$, or $r\equiv 0\pmod3$ the divisibility condition \eqref{divisibility} is fulfilled. On the other hand, suppose
now $r\equiv1\pmod3$. Then,
  \begin{eqnarray}
    \frac{q^{2r+2}-1}{q-1} ~=~
    q^{2r+1}+q^{2r}+q^{2r-1}+\cdots+1~\equiv~ 1\pmod{q^2+q+1}, \\
    \frac{q^{2r}-1}{q-1}~=~q^{2r-1}+q^{2r-2}+\cdots+1~\equiv~ q+1\pmod{q^2+q+1}.
  \end{eqnarray}
Reducing \eqref{mdiv} modulus $q^2+q+1$, we get
\begin{equation}\label{div-bis}
M\frac{q^n-1}{q-1} ~ = ~ (q+1)+(q+1)q^2\frac{q^{2r-2}-1}{q-1}~\equiv ~  q+1 \pmod{q^2+q+1}.
\end{equation}
However \eqref{div-bis} contradicts \eqref{divisibility}, since $q+1\not\equiv 0 \pmod{q^2+q+1}$. Therefore, if $r\equiv1\pmod3$ then $R^\uparrow(H)$ cannot be a spread. Theorem~\ref{main 2} is proved.

\vskip.2cm\noindent
\begin{minipage}[t]{\textwidth}
Authors' addresses:
\vskip.2cm\noindent\nobreak
\centerline{
\begin{minipage}[t]{7cm}
Ilaria Cardinali and Antonio Pasini\\
Department of Information Engineering and Mathematics\\University of Siena\\
Via Roma 56, I-53100, Siena, Italy\\
ilaria.cardinali@unisi.it\\
antonio.pasini@unisi.it \\
\end{minipage}\hfill
\begin{minipage}[t]{7cm}
Luca Giuzzi\\
D.I.C.A.T.A.M. \\ Section of Mathematics \\
Universit\`a di Brescia\\
Via Branze 53, I-25123, Brescia, Italy \\
luca.giuzzi@unibs.it
\end{minipage}}
\end{minipage}
\appendix
\section{The polynomial $\Delta(u_1,\ldots,u_8)$}
\label{PN}
We present in detail the outcome of the computation of the polynomial
$\Delta(c_1,\ldots,c_8)$ of Section~\ref{Sect4}.
This result has been obtained by a straightforward implementation of
the procedure outlined in Lemma~\ref{end} using the computer algebra system \cite{JS92}.
The actual computation, in particular factoring the discriminant of the quadrics,
took, for each polynomial being considered, slightly more than $3$ hours on
a multiprocessor {\tt XEON E7540} machine and has necessitated a maximum of approximately $22$ Gb of
RAM.

\begin{tiny}
\[
\label{HugePol}
\begin{array}{l}
\Delta(c_1,\ldots,c_8):=(-a_{23}a_{24}a_{45}+a_{24}^2a_{35})c_1^3+\normalsize((-a_{23}a_{35}a_{46}+((a_{13}a_{24}+a_{14}a_{23})a_{45}-2a_{14}a_{24}a_{35}))c_2+
((a_{23}a_{24}a_{56}+\\ (a_{14}a_{23}a_{46}+
(-a_{12}a_{24}a_{45}-a_{15}a_{24}^2)))c_3+
((-a_{24}a_{45}a_{56}+(a_{35}a_{46}^2-a_{13}a_{23}a_{46}+(-a_{12}a_{23}a_{45}+(2a_{12}a_{24}a_{35}+\\
a_{15}a_{23}a_{24}))))c_4+
((-a_{23}^2a_{46}+(a_{13}a_{24}^2-a_{14}a_{23}a_{24}))c_5+
((a_{23}^2a_{45}-a_{23}a_{24}a_{35})c_6+((a_{24}a_{35}a_{46}+(-a_{13}a_{23}a_{24}+\\
a_{14}a_{23}^2))c_7+ (a_{24}a_{45}-
a_{23}^2)c_8)))))\normalsize)c_1^2+
\normalsize((-a_{23}a_{35}a_{56}+(a_{13}a_{35}a_{46}+((a_{15}a_{23}-a_{13}a_{14})a_{45}+(-a_{15}a_{24}+a_{14}^2)a_{35})))c_2^2+\\ ((-a_{13}a_{24}a_{56}+
((-a_{12}a_{35}+
(a_{15}a_{23}-a_{13}a_{14}))a_{46}+((a_{16}a_{23}+a_{12}a_{14})a_{45}+(-3a_{16}a_{24}a_{35}+a_{14}a_{15}a_{24}))))c_3+\\
(((a_{35}a_{46}+ (a_{14}a_{45}-a_{13}a_{23}))a_{56}+((a_{13}^2-a_{15}a_{45})a_{46}+(a_{16}a_{45}^2+a_{12}a_{13}a_{45}+((a_{16}a_{23}-2a_{12}a_{14})a_{35}-a_{14}a_{15}a_{23}))))c_4+\\
(((-a_{24}a_{45}-a_{23}^2)a_{56}+(a_{35}a_{46}^2+a_{13}a_{23}a_{46}+(-a_{12}a_{23}a_{45}+(a_{12}a_{24}a_{35}+(-a_{13}a_{14}a_{24}+a_{14}^2a_{23})))))c_5+\\
((2a_{13}a_{24}a_{35}-2a_{13}a_{23}a_{45})c_6+
((a_{24}a_{35}a_{56}+(-a_{14}a_{35}a_{46}+(a_{12}a_{23}a_{35}+(a_{13}^2a_{24}+(a_{15}a_{23}^2-
a_{13}a_{14}a_{23})))))c_7+\\
(a_{35}a_{46}+(2a_{13}a_{23}-a_{14}a_{45}))c_8)))))c_2+ ((a_{12}a_{24}a_{56}+((a_{16}a_{23}+a_{12}a_{14})a_{46}+ a_{14}a_{16}a_{24}))c_3^2+\\
((a_{24}a_{56}^2+
(a_{14}a_{46}+a_{12}a_{23})a_{56}+(-a_{15}a_{46}^2+
(a_{16}a_{45}-a_{12}a_{13})a_{46}+
(-a_{12}^2a_{45}+((-a_{13}a_{16}-a_{12}a_{15})a_{24}-a_{14}a_{16}a_{23}))))c_4+\\
((-2a_{12}a_{23}a_{46}-2a_{16}a_{23}a_{24})c_5+(((-a_{24}a_{45}-a_{23}^2)a_{56}+
(a_{35}a_{46}^2-a_{13}a_{23}a_{46}+(a_{12}a_{23}a_{45}+(a_{12}a_{24}a_{35}+
((2a_{15}a_{23}-\\a_{13}a_{14})a_{24}+
a_{14}^2a_{23})))))c_6+((-a_{15}a_{24}a_{46}+
(a_{16}a_{24}a_{45}+(-a_{12}a_{13}a_{24}+(a_{16}a_{23}^2+a_{12}a_{14}a_{23}))))c_7+\\
(-a_{24}a_{56}+(-a_{14}a_{46}-2a_{12}a_{23}))c_8))))c_3+
(((-a_{13}a_{46}+(-a_{12}a_{45}+ a_{15}a_{24}))a_{56}+
(-2a_{16}a_{35}a_{46}+\\ (a_{12}^2a_{35}+(a_{13}a_{16}+
a_{12}a_{15})a_{23})))c_4^2+
(((-a_{23}a_{46}-a_{14}a_{24})a_{56}+(a_{13}a_{46}^2+a_{12}a_{45}a_{46}+
(a_{16}a_{24}a_{45}+(a_{12}a_{13}a_{24}+
(a_{16}a_{23}^2-\\ a_{12}a_{14}a_{23})))))c_5+
(((a_{23}a_{45}+a_{24}a_{35})a_{56}+((-a_{13}a_{45}+2a_{14}a_{35})a_{46}+(-a_{12}a_{45}^2-
a_{15}a_{24}a_{45}+(-a_{12}a_{23}a_{35}+\\
(a_{13}^2a_{24}+(-a_{15}a_{23}^2-a_{13}a_{14}a_{23}))))))c_6+
(((-a_{13}a_{24}+2a_{14}a_{23})a_{56}+((-a_{12}a_{35}-a_{15}a_{23})a_{46}+
(a_{16}a_{23}a_{45}- 3a_{16}a_{24}a_{35})))c_7+\\ (-2a_{23}a_{56}+(a_{13}a_{46}+(a_{12}a_{45}-a_{15}a_{24})))c_8)))c_4+
((-a_{24}^2a_{56}+a_{23}a_{46}^2)c_5^2+(2(a_{24}a_{35}-\\
a_{23}a_{45})a_{46}c_6+((a_{23}a_{24}a_{56}+((a_{13}a_{24}-a_{14}a_{23})a_{46}+(a_{12}a_{24}a_{45}+a_{15}a_{24}^2)))c_7+
(3a_{23}a_{46}+a_{14}a_{24})c_8))c_5+ ((a_{23}a_{45}^2-\\
a_{24}a_{35}a_{45})c_6^2+((a_{14}a_{23}a_{45}-a_{14}a_{24}a_{35})c_7+(-3a_{23}a_{45}+3a_{24}a_{35})c_8)c_6+
((-a_{12}a_{24}a_{35}-a_{15}a_{23}a_{24})c_7^2+(a_{13}a_{24}-\\
2a_{14}a_{23})c_8c_7+2a_{23}c_8^2)))))\normalsize)c_1+
((a_{13}a_{35}a_{56}+(-a_{13}a_{15}a_{45}+(a_{16}a_{35}^2+a_{14}a_{15}a_{35})))c_2^3+
\normalsize((-a_{12}a_{35}a_{56}+(-a_{13}a_{15}a_{46}+\\
((-a_{13}a_{16}+a_{12}a_{15})a_{45}+(a_{14}a_{16}a_{35}+a_{15}^2a_{24}))))c_3+((a_{13}^2a_{56}+((a_{13}a_{16}-a_{12}a_{15})a_{35}-a_{15}^2a_{23}))c_4+
(((a_{35}a_{46}+\\
(a_{14}a_{45}+a_{13}a_{23}))a_{56}+(-a_{15}a_{45}a_{46}+(a_{16}a_{45}^2+a_{12}a_{13}a_{45}+((2a_{16}a_{23}-a_{12}a_{14})a_{35}+(-a_{13}a_{15}a_{24}+a_{14}a_{15}a_{23})))))c_5+\\
((a_{13}^2a_{45}+(-a_{12}a_{35}^2+(-a_{15}a_{23}-a_{13}a_{14})a_{35}))c_6+((-a_{14}a_{35}a_{56}+(-a_{16}a_{35}a_{45}+(-a_{12}a_{13}a_{35}-a_{13}a_{15}a_{23})))c_7+
(a_{35}a_{56}+\\
(-a_{15}a_{45}-a_{13}^2))c_8))))\normalsize)c_2^2+
\normalsize(((-a_{13}a_{16}+a_{12}a_{15})a_{46}+(a_{12}a_{16}a_{45}+ 2a_{15}a_{16}a_{24}))c_3^2+
((-2a_{12}a_{13}a_{56}+(-2a_{12}a_{16}a_{35}-\\
2a_{15}a_{16}a_{23}))c_4+
((a_{24}a_{56}^2+(a_{14}a_{46}-a_{12}a_{23})a_{56}+(-a_{15}a_{46}^2+
(a_{16}a_{45}+a_{12}a_{13})a_{46}+(-a_{12}^2a_{45}+((-a_{13}a_{16}-a_{12}a_{15})a_{24}+\\
a_{14}a_{16}a_{23}))))c_5+
(((a_{35}a_{46}+(a_{14}a_{45}+a_{13}a_{23}))a_{56}+((-a_{15}a_{45}+a_{13}^2)a_{46}+(a_{16}a_{45}^2-a_{12}a_{13}a_{45}+(a_{16}a_{23}a_{35}+(-2a_{13}a_{15}a_{24}+\\
a_{14}a_{15}a_{23})))))c_6+
((-a_{15}a_{24}a_{56}+(-a_{16}a_{35}a_{46}+(a_{12}^2a_{35}+(-a_{13}a_{16}+a_{12}a_{15})a_{23})))c_7+(-a_{15}a_{46}+(-a_{16}a_{45}+2a_{12}a_{13}))c_8))))c_3+\\
((-a_{13}a_{56}^2+
(-a_{16}a_{35}-a_{14}a_{15})a_{56}+(a_{15}^2a_{46}-a_{15}a_{16}a_{45}))c_4^2+((-a_{23}a_{56}^2+(a_{13}a_{46}+(-a_{12}a_{45}+a_{14}^2))a_{56}+((-a_{16}a_{35}-\\ a_{14}a_{15})a_{46}+
(a_{14}a_{16}a_{45}+(a_{12}^2a_{35}+(a_{13}a_{16}+a_{12}a_{15})a_{23}))))c_5+((-2a_{13}a_{45}a_{56}+(2a_{15}a_{35}a_{46}-2a_{16}a_{35}a_{45}))c_6+(((a_{15}a_{23}- \\
a_{13}a_{14})a_{56}+
(a_{13}a_{15}a_{46}+
(a_{12}a_{15}a_{45}+(-a_{14}a_{16}a_{35}+a_{15}^2a_{24}))))c_7+(3a_{13}a_{56}+(3a_{16}a_{35}+a_{14}a_{15}))c_8)))c_4+(((a_{23}a_{46}+a_{14}a_{24})a_{56}+\\
((a_{12}a_{45}-a_{15}a_{24})a_{46}+(2a_{16}a_{24}a_{45}+(a_{12}a_{13}a_{24}+(a_{16}a_{23}^2-a_{12}a_{14}a_{23})))))c_5^2+(((-a_{23}a_{45}+a_{24}a_{35})a_{56}+(a_{13}a_{45}a_{46}+\\
(-a_{12}a_{45}^2-a_{15}a_{24}a_{45}+(-a_{12}a_{23}a_{35}+(a_{13}^2a_{24}+(-a_{15}a_{23}^2-a_{13}a_{14}a_{23}))))))c_6+((-a_{13}a_{24}a_{56}+((-a_{12}a_{35}-a_{15}a_{23})a_{46}+\\
((a_{16}a_{23}-a_{12}a_{14})a_{45}+(-3a_{16}a_{24}a_{35}-a_{14}a_{15}a_{24}))))c_7+(a_{23}a_{56}+(-2a_{13}a_{46}+(a_{12}a_{45}+(-a_{15}a_{24}-a_{14}^2))))c_8))c_5+\\
((a_{35}^2a_{46}+(-a_{13}a_{45}^2+a_{14}a_{35}a_{45}))c_6^2+((a_{23}a_{35}a_{56}+(a_{13}a_{35}a_{46}+((a_{12}a_{35}-a_{13}a_{14})a_{45}+(a_{15}a_{24}+a_{14}^2)a_{35})))c_7+
(3a_{13}a_{45}-\\2a_{14}a_{35})c_8)c_6+(((-a_{16}a_{23}+a_{12}a_{14})a_{35}+a_{13}a_{15}a_{24})c_7^2+(-a_{12}a_{35}+(-a_{15}a_{23}+a_{13}a_{14}))c_8c_7-2a_{13}c_8^2))))\normalsize)c_2+
((a_{12}a_{16}a_{46}+\\ a_{16}^2a_{24})c_3^3+\normalsize((a_{12}^2a_{56}-a_{16}^2a_{23})c_4+((-a_{12}^2a_{46}-a_{12}a_{16}a_{24})c_5+((a_{24}a_{56}^2+(a_{14}a_{46}-a_{12}a_{23})a_{56}+(-a_{15}a_{46}^2+
(a_{16}a_{45}-\\
a_{12}a_{13})a_{46}+(-2a_{13}a_{16}a_{24}+a_{14}a_{16}a_{23})))c_6+((-a_{16}a_{24}a_{56}+a_{12}a_{16}a_{23})c_7+(-a_{16}a_{46}-a_{12}^2)c_8)))\normalsize)c_3^2+
\normalsize((a_{12}a_{56}^2- a_{14}a_{16}a_{56}+\\ (a_{15}a_{16}a_{46}-a_{16}^2a_{45}))c_4^2+((-2a_{12}a_{46}-2a_{16}a_{24})a_{56}c_5+((-a_{23}a_{56}^2+(-a_{13}a_{46}+(a_{12}a_{45}+
(2a_{15}a_{24}+a_{14}^2)))a_{56}+
((-a_{16}a_{35}-\\a_{14}a_{15})a_{46}+(a_{14}a_{16}a_{45}+(a_{12}^2a_{35}+(a_{13}a_{16}+a_{12}a_{15})a_{23}))))c_6+(((a_{16}a_{23}+a_{12}a_{14})a_{56}+
(a_{13}a_{16}a_{46}+(a_{12}a_{16}a_{45}+
a_{15}a_{16}a_{24})))c_7+\\(-3a_{12}a_{56}+a_{14}a_{16})c_8)))c_4+
((a_{12}a_{46}^2+a_{16}a_{24}a_{46})c_5^2+(((a_{23}a_{46}+a_{14}a_{24})a_{56}+
(a_{13}a_{46}^2+(-a_{12}a_{45}-2a_{15}a_{24})a_{46}+
(a_{16}a_{24}a_{45}+\\(a_{12}a_{13}a_{24}+
(a_{16}a_{23}^2-a_{12}a_{14}a_{23})))))c_6+((-a_{12}a_{14}a_{46}-a_{14}a_{16}a_{24})c_7+
(3a_{12}a_{46}+3a_{16}a_{24})c_8))c_5+(((-a_{23}a_{45}+ 2a_{24}a_{35})a_{56}+\\
((-a_{13}a_{45}+a_{14}a_{35})a_{46}+(-a_{12}a_{23}a_{35}+(a_{13}^2a_{24}+(-a_{15}a_{23}^2-a_{13}a_{14}a_{23})))))c_6^2+((-a_{13}a_{24}a_{56}+
((-a_{12}a_{35}-
(a_{15}a_{23}+a_{13}a_{14}))a_{46}+\\
(a_{16}a_{23}a_{45}-3a_{16}a_{24}a_{35})))c_7+
(a_{23}a_{56}+(a_{13}a_{46}+(-2a_{12}a_{45}+(-4a_{15}a_{24}-a_{14}^2))))c_8)c_6+
(a_{13}a_{16}a_{24}c_7^2-
(a_{16}a_{23}+a_{12}a_{14})c_8c_7+\\ 2a_{12}c_8^2)))\normalsize)c_3+
(((a_{13}a_{16}+a_{12}a_{15})a_{56}+a_{16}^2a_{35})c_4^3+\normalsize(((a_{16}a_{23}-a_{12}a_{14})a_{56}+
((-a_{13}a_{16}-
a_{12}a_{15})a_{46}-
a_{15}a_{16}a_{24}))c_5+
(((a_{12}a_{35}-\\(a_{15}a_{23}+a_{13}a_{14}))a_{56}+((a_{13}a_{16}+a_{12}a_{15})a_{45}+(-2a_{14}a_{16}a_{35}+a_{15}^2a_{24})))c_6+
((a_{14}a_{56}^2+
(a_{16}a_{45}-a_{15}a_{46})a_{56}-
a_{12}a_{16}a_{35})c_7+\\ (-a_{56}^2+(-a_{13}a_{16}-a_{12}a_{15}))c_8))\normalsize)c_4^2+
\normalsize((-a_{12}a_{24}a_{56}+((-a_{16}a_{23}+a_{12}a_{14})a_{46}+
a_{14}a_{16}a_{24}))c_5^2+
((-a_{13}a_{24}a_{56}+((-a_{12}a_{35}+\\
(a_{15}a_{23}+a_{13}a_{14}))a_{46}+((a_{16}a_{23}-a_{12}a_{14})a_{45}+
(-3a_{16}a_{24}a_{35}-a_{14}a_{15}a_{24}))))c_6+
((a_{24}a_{56}^2-
a_{14}a_{46}a_{56}+(a_{15}a_{46}^2-
a_{16}a_{45}a_{46}-\\a_{13}a_{16}a_{24}))c_7+(2a_{46}a_{56}+(-a_{16}a_{23}+a_{12}a_{14}))c_8))c_5+
((-a_{23}a_{35}a_{56}+ ((a_{12}a_{35}-(a_{15}a_{23}+a_{13}a_{14}))a_{45}+(2a_{15}a_{24}+
a_{14}^2)a_{35}))c_6^2+\\
(((-a_{35}a_{46}+a_{14}a_{45})a_{56}+(-a_{15}a_{45}a_{46}+(a_{16}a_{45}^2+
((a_{16}a_{23}+
a_{12}a_{14})a_{35}+
a_{13}a_{15}a_{24}))))c_7+(-2a_{45}a_{56}+(-a_{12}a_{35}+
(a_{15}a_{23}+\\a_{13}a_{14})))c_8)c_6+((-a_{15}a_{24}a_{56}+a_{16}a_{35}a_{46})c_7^2+
(-2a_{14}a_{56}+(a_{15}a_{46}-a_{16}a_{45}))c_8c_7+2a_{56}c_8^2))\normalsize)c_4+
((a_{12}a_{24}a_{46}+a_{16}a_{24}^2)c_5^3+\\
\normalsize((a_{13}a_{24}a_{46}-(a_{12}a_{24}a_{45}+a_{15}a_{24}^2))c_6-
((a_{24}a_{46}a_{56}+a_{16}a_{23}a_{24})c_7+(a_{12}a_{24}-a_{46}^2)c_8)\normalsize)c_5^2+
\normalsize((a_{23}a_{35}a_{46}+
(-a_{13}a_{24}a_{45}+\\
a_{14}a_{24}a_{35}))c_6^2+
((a_{24}a_{45}a_{56}+(a_{35}a_{46}^2+(a_{12}a_{24}a_{35}+a_{15}a_{23}a_{24})))c_7+
(2a_{45}a_{46}+a_{13}a_{24})c_8)c_6+
((a_{15}a_{24}a_{46}- a_{16}a_{24}a_{45})c_7^2+\\
(-a_{24}a_{56}+a_{14}a_{46})c_8c_7-2a_{46}c_8^2)\normalsize)c_5+
((-a_{23}a_{35}a_{45}+a_{24}a_{35}^2)c_6^3+\normalsize((-a_{35}a_{45}a_{46}+
(a_{13}a_{24}-a_{14}a_{23})a_{35})c_7+
(-a_{45}^2+\\a_{23}a_{35})c_8\normalsize)c_6^2+
\normalsize((-a_{24}a_{35}a_{56}-a_{14}a_{35}a_{46})c_7^2+(a_{35}a_{46}-a_{14}a_{45})c_8c_7+2a_{45}c_8^2\normalsize)c_6+
(a_{16}a_{24}a_{35}c_7^3+a_{15}a_{24}c_8c_7^2+a_{14}c_8^2c_7-c_8^3))))))
\end{array} \]
\end{tiny}


%









	



%










 \end{document}